\documentclass[12pt]{article}
\usepackage{amsmath,amsbsy,amssymb,latexsym,amsthm}
\usepackage[linkcolor=blue,colorlinks=true]{hyperref}
\usepackage[numbers,sort&compress]{natbib}
\usepackage{cleveref}
\usepackage{indentfirst}
\usepackage{float}
\providecommand{\keywords}[1]{\textbf{\textit{Keyword:}} #1}
\providecommand{\msc}[1]{\textbf{\textit{Mathematics Subject Classifications (2000):}} #1}
\newtheorem{Definition}{Definition}[section]

\newtheorem{Theorem}{Theorem}[section]
\newtheorem{Remark}{Remark}[section]
\newtheorem{Example}{Example}[section]
\newtheorem{Lemma}{Lemma}[section]

\topmargin-0.80cm \oddsidemargin-1cm \evensidemargin 2cm
\setlength{\textwidth}{18.5cm}
\setlength{\textheight}{22cm}
\setlength{\columnsep}{2.5cm}
\usepackage[edges]{forest}
\usepackage{caption}

\tikzset{parent/.style={align=center,text width=3cm,rounded corners=3pt},
	child/.style={align=center,text width=3cm,rounded corners=3pt}
}

\colorlet{col1}{white}
\colorlet{col2}{gray!20}
\colorlet{col3}{gray!40}
\colorlet{col4}{gray!60}
\newcommand{\footremember}[2]{%
	\footnote{#2}
	\newcounter{#1}
	\setcounter{#1}{\value{footnote}}%
}

\title{The sine and cosine diffusive representations for the Caputo fractional derivative }
\author{%
	Hassan Khosravian-Arab\footremember{alley}{ Department of Applied Mathematics and Computer Science, Faculty
		of Mathematics and Statistics, University of Isfahan, 81746-73441 Isfahan, Iran. {\it E-mail	Address:} h.khosravian@sci.ui.ac.ir.}%
	\and Mehdi Dehghan\footremember{trailer}{Corresponding author. \newline \indent\indent Department of Applied Mathematics, Faculty of Mathematics and Computer Science, Amirkabir University of Technology (Tehran Polytechnic),
		No.424, Hafez Avenue, Tehran, Iran. {\it E-mail	Address:} mdehghan@aut.ac.ir, mdehghan.aut@gmail.com.}%
}
\begin{document}
\date{}
\maketitle
\begin{abstract}
	As we are aware, various types of methods  have been proposed to approximate the Caputo fractional derivative numerically. A common challenge of the methods is  the non-local property of the Caputo fractional derivative which leads to the  slow and memory consuming methods. Diffusive representation of fractional derivative is an efficient tool to overcome the mentioned challenge. This paper presents two new diffusive representations to approximate the Caputo fractional derivative of order $0<\alpha<1$. Error analysis of the newly presented methods together with some numerical examples are provided at the end.
\end{abstract}

\keywords 
Caputo fractional derivative; Non-locality property; Diffusive representation; Infinite state representation; Memory free formulation;   Error analysis.

\msc 
26A33;
\ 65D30;
\ 65D25;
\ 65D32.
\section{Introduction}
Nowadays, there is an international awareness on the importance of fractional calculus as well as their broad applications
in various areas such as: mathematics, statistics, physics, chemistry, electronic, engineering, biology and etc \cite{Ji2017,Lei2017,Castillo2018,Du2020,MR2218073,MR4030088}. This means that many real world problems have been modeled by the following  fractional differential equation (FDEs):
\begin{equation}\label{Eq_1}
		{}^{C}D_{a^+}^{\alpha}y(t)=F(t,y(t)),\  y(a)=y_a,\ 0<\alpha\leq1,
\end{equation}
where ${}^{C}D_{a^+}^{\alpha}$ used for the Caputo fractional derivative of order $\alpha$ with starting point $a$ \cite{MR2680847,MR2218073}:
\begin{equation}\label{Eq_2}
		{}^{C}D_{a^+}^{\alpha}y(t)=\frac{1}{\Gamma(1-\alpha)}\int_{a}^{t}(t-\tau)^{-\alpha}y'(\tau)\,d\tau,\ 0<\alpha<1,
\end{equation}
where $\Gamma(.)$ is the Euler's Gamma function.

The first and most important step to solve FDEs \eqref{Eq_1} numerically, is to approximate the Caputo fractional derivative(s) ${}^{C}D_{a^+}^{\alpha}y(t)$. Unfortunately, due to the non-locality property of the Caputo fractional derivative, there is a significant computational challenge to approximate this operator numerically \cite{Diethelm2021,Diethelm2022}.  

The review of existing literature on the numerical solutions of FDEs \eqref{Eq_1} shows that, all the methods are based on the following two ideas \cite{Li2015}:
\begin{itemize}
	\item \textbf{Direct Methods}: In these methods the Caputo fractional derivative can be approximated directly to obtain the numerical schemes.
	\item \textbf{Indirect Methods}: In these methods,  the original problem \eqref{Eq_1} is transformed into the fractional integral
	equation and then using a suitable method to discretize the	fractional integral, the numerical schemes can be obtained.
\end{itemize}

The Direct Methods (DM) can be also divided into two main categories:
\begin{itemize}
	\item Nodal Methods.
	\item Modal Methods.
\end{itemize}

For the readers convenience, we summarized the existing DM to approximate Caputo fractional derivative \eqref{Eq_2}  in Fig. \ref{DM}.  

The main drawback of these methods is that to handle the non-locality of the fractional differential operators, they require a relatively large amount of time and/or computer memory \cite{Diethelm2022,Diethelm2021}.

Specially, when we approximate the Caputo fractional derivative of order $\alpha$ at points $\{x_j\}_{j=0}^n$ based on the nodal methods,  the most common difficulty which we face with is that the computational complexity of these methods is proportional to $\mathcal{O}(n^2)$ (for the classical convolution types), $\mathcal{O}(n \log^2 n)$ (for some modification types)  or $\mathcal{O}(n \log n)$ (by the use of the fast Fourier transform) and then for large values of $n$, the computational complexity of these methods increases very fast.

To overcome this drawback, a new representation of the Caputo fractional derivative (which is so-called as diffusive representation (DR), infinite state representation (ISR) or memory free formulation (MFF)) was introduced by Yuan and Agrawal in \cite{Yuan2002,Agrawal2009}.  In fact, they have shown that the Caputo fractional derivative can be reformulated as:
\begin{equation}\label{YARep_1}
	{}^{C}D_{0^+}^{\alpha}y(t)=\int_{0}^{+\infty}\phi(\omega,t)\,d\omega,
\end{equation}
where $\phi(\omega,t)$ for $\omega\in(0,+\infty)$ called the observed
system's infinite states at time $t$ and also satisfies the inhomogeneous first order differential equation in the following form:
\begin{equation}\label{YARep_Diff}
	\frac{\partial}{\partial t}\phi(\omega,t)=h_1(\omega)\phi(\omega,t)+h_2(\omega)y'(t),\ \phi(\omega,0)=0,
\end{equation}
with certain functions $h_1; h_2: (0,+\infty) \to \mathbb{R}$. Some new improvements and modifications of the diffusive representation have been introduced in \cite{Liu2018,MR3936246,Trinks2002,Schmidt2006,Diethelm2022,Hinze2019,MR4392021,MR3474912,Birk2010,Yuan2002,Audounet,Matignon2009,Agrawal2009}. One of the most important features of these methods is to reduce the computational complexity of fractional differential solvers effectively to just $\mathcal{O}(n)$ (See \cite{Diethelm2021,Ford2001,Garrappa2018,AnewDiff}). 
\begin{center}
	\resizebox*{.75\linewidth}{!}{%
		\begin{forest}
			forked edges,
			for tree={
				grow'=east,
				draw,
				rounded corners,
				text width=5cm,
				node options={align=center},
			}       
			[Direct Methods \cite{Li2015},text width=5cm, fill=col1, parent, s sep=2cm
			[Nodal Methods,text width=4cm, for tree={child, fill=col1,text width=4cm}
			[Polynomial Interpolation, for tree={child, fill=col1,text width=6cm}
			[Based on Gauss Nodes]
			[Based on Equispaced Nodes]
			[Based on Non-Equispaced Nodes]
			]
			[Spline Interpolation,text width=6cm]
			[Convolution Quadrature Rules,text width=6cm]
			]
			[Modal Methods, for tree={child, fill=col1,text width=4cm}
			[Based on the (Non)-Classical Polynomials,for tree={child, fill=col1,text width=5cm}
			[Classical and Fractional Taylor Series]
			[Bernstein and Other Polynomials ]
			]
			[Based on the Classical Orthogonal Polynomials,text width=5cm,for tree={child, fill=col1,text width=5cm}
			[Classical Jacobi Polynomials and Their Especial Cases]
			[Classical Laguerre Polynomials]
			[Classical Hermite Polynomials]
			]
			[Based on the Non-Classical Orthogonal Functions,text width=5cm,for tree={child, fill=col1,text width=5cm}
			[M\"{u}ntz Functions and Their Especial Cases  \cite{MR4276333,MR4162334}]
			[Jacobi Polyfractonomials and Their Especial Cases \cite{MR3101519,MR3465404}]
			[Generalized Laguerre Functions of the first and second type \cite{MR3384739,MR3628263}]
			]
			[Based on the Classical Piece-wise Orthogonal Polynomials,text width=5cm]
			[Based on the Non-Classical Piece-wise Orthogonal Polynomials,text width=5cm]
			]
			]
			]
	\end{forest}}
	\captionsetup{type=figure}
	
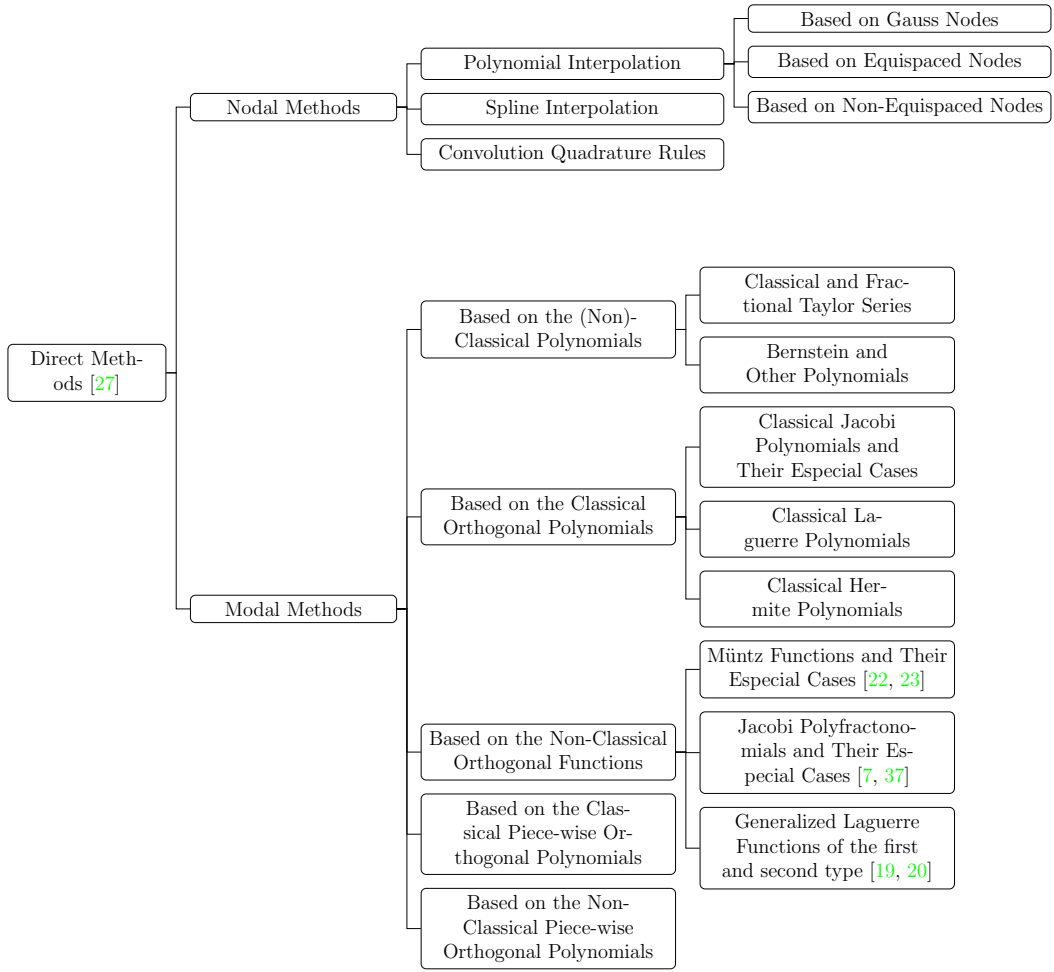
\captionof{figure}{Diagram of the Direct Methods to approximate Caputo fractional derivative. }
	\label{DM}
\end{center}

The outline of this paper is as follows. In Section \ref{Sec_2} two new diffusive representations (DR)  for the Caputo fractional derivative, numerical parts of the new methods and error analysis of them are presented. In Section \ref{Sec_3} some numerical results together with an improvement of the new method are given. Finally, in Section \ref{Sec_4} concluding remarks and some future works are proposed.  
\section{The sine and cosine diffusive representations}\label{Sec_2}
In this section, we introduce two new diffusive representations (DRs) which we will call the sine and cosine diffusive representations (SDR and CDR) for Caputo  fractional derivative. To reach our aim, we commence with the preliminary definitions and theorems.
\begin{Definition}\cite{MR2218073,MR2680847} Let $\alpha>0$. The Caputo fractional derivative is defined as:
	\begin{equation}\label{Cap}
		{}^{C}D_{0^+}^{\alpha}y(t)=\frac{1}{\Gamma(\lceil \alpha \rceil-\alpha)}\int_{0}^{t}(t-\tau)^{\lceil \alpha \rceil-\alpha-1}y^{(\lceil \alpha \rceil)}(\tau)\,d\tau,
	\end{equation}
	where $\lceil .\rceil$ stands for the ceiling function, that rounds up to the next integer not less than its argument. 
	
	For $0<\alpha<1$ we have:
	\begin{equation}\label{Cap1}
		{}^{C}D_{0^+}^{\alpha}y(t)=\frac{1}{\Gamma(1-\alpha)}\int_{0}^{t}(t-\tau)^{-\alpha}y'(\tau)\,d\tau.
	\end{equation}
\end{Definition} 
For more information and properties of the Caputo fractional derivative see  \cite{MR2218073,MR2680847}.  
Before going to introduce two new diffusive representations of Caputo fractional derivative of order $0<\alpha<1$, we recall  Yuan and Agrawal approach \cite{Yuan2002}.
\begin{Theorem}
	(Yuan and Agrawal approach (YA)) Let   $0<\alpha<1$. We have the following diffusive representation of the Caputo fractional derivative:
	\begin{equation*}\label{Eq_0}
		{}^{C}D_{0^+}^{\alpha}y(t)=\int_{0}^{\infty}z^{2\alpha-1} \omega^{YA}(z,t)\,dz,
	\end{equation*}
	where
	\[
	\omega^{YA}(z,t)=\frac{2\sin(\pi\alpha)}{\pi}\left(\int_{0}^{t}e^{-(t-\tau)z^2}y'(\tau)\,d\tau\right).
	\]
	It is easy to verify that $\omega^{YA}(z,t)$ satisfies the following differential equation:
	\begin{equation}\label{ODE1}
		\frac{\partial \omega^{YA}}{\partial t}+z^2\omega^{YA}=\frac{2\sin({\pi\alpha})}{\pi}y'(t),\ \omega^{YA}(z,0)=0.
	\end{equation}
\end{Theorem}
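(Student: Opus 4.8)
The plan is to verify the two assertions separately: first the integral representation, then the auxiliary differential equation \eqref{ODE1}. For the representation I would work backwards from the claimed right-hand side, insert the definition of $\omega^{YA}(z,t)$, and interchange the order of the $z$- and $\tau$-integrations to obtain
\begin{equation*}
\int_{0}^{\infty} z^{2\alpha-1}\omega^{YA}(z,t)\,dz=\frac{2\sin(\pi\alpha)}{\pi}\int_{0}^{t} y'(\tau)\left(\int_{0}^{\infty} z^{2\alpha-1}e^{-(t-\tau)z^2}\,dz\right)d\tau.
\end{equation*}
The crux of the calculation is the inner $z$-integral, which I would evaluate by the substitution $u=z^{2}$. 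This turns it into a Gamma integral, namely
\begin{equation*}
\int_{0}^{\infty} z^{2\alpha-1}e^{-(t-\tau)z^2}\,dz=\frac{1}{2}\int_{0}^{\infty} u^{\alpha-1}e^{-(t-\tau)u}\,du=\frac{\Gamma(\alpha)}{2\,(t-\tau)^{\alpha}}.
\end{equation*}

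Substituting this back, the prefactor $\frac{2\sin(\pi\alpha)}{\pi}\cdot\frac{\Gamma(\alpha)}{2}=\frac{\sin(\pi\alpha)\,\Gamma(\alpha)}{\pi}$ is exactly where the Euler reflection formula $\Gamma(\alpha)\Gamma(1-\alpha)=\pi/\sin(\pi\alpha)$ enters; it collapses the constant to $1/\Gamma(1-\alpha)$. The right-hand side therefore reduces to $\frac{1}{\Gamma(1-\alpha)}\int_{0}^{t}(t-\tau)^{-\alpha}y'(\tau)\,d\tau$, which is precisely the Caputo derivative \eqref{Cap1}, establishing the representation.

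For the differential equation I would simply differentiate the convolution defining $\omega^{YA}$ with respect to $t$ using the Leibniz rule. The boundary contribution at $\tau=t$ gives $\frac{2\sin(\pi\alpha)}{\pi}y'(t)$ since $e^{0}=1$, while differentiating the exponential inside the integral reproduces $-z^{2}\omega^{YA}$; transposing the latter term yields \eqref{ODE1}. The initial condition $\omega^{YA}(z,0)=0$ is immediate, as the defining integral is empty at $t=0$.

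The main obstacle is rigorously justifying the Fubini interchange and the convergence of the inner integral rather than the algebra itself. Near $z=0$ the factor $z^{2\alpha-1}$ is integrable exactly because $2\alpha-1>-1$ when $\alpha>0$, and near $z=\infty$ the Gaussian weight $e^{-(t-\tau)z^{2}}$ decays rapidly provided $t-\tau>0$. The only delicate regime is $\tau\to t^{-}$, where the resulting $(t-\tau)^{-\alpha}$ factor is singular; this remains integrable and the interchange is legitimate by Tonelli's theorem applied to the nonnegative integrand once one assumes $y'$ is, say, continuous (or at least locally integrable). Under this mild regularity on $y$ every step above is valid.
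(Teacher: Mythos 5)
Your proof is correct. Note that the paper itself offers no argument for this theorem---its ``proof'' is just a pointer to \cite{Yuan2002,Diethelm2008}---so the right comparison is with the standard derivation in those references and with the paper's own proof of the analogous cosine representation (Theorem \ref{Cos}). Your argument is that same computation run in the opposite direction: in Theorem \ref{Cos} the authors start from the Caputo integral, rewrite $1/\Gamma(1-\alpha)$ via the reflection formula, insert an integral representation of $\Gamma(\alpha)$, and substitute $\theta=(t-\tau)z$, whereas you start from the diffusive integral, collapse the inner $z$-integral through $u=z^2$ into $\Gamma(\alpha)/\bigl(2(t-\tau)^{\alpha}\bigr)$, and only then invoke reflection to recover $1/\Gamma(1-\alpha)$. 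The two routes are logically equivalent; yours has the modest advantage of isolating exactly where integrability is needed, and your Tonelli justification (integrability of $z^{2\alpha-1}$ at $0$ since $2\alpha-1>-1$, Gaussian decay at $\infty$ for $\tau<t$, and integrability of the resulting $(t-\tau)^{-\alpha}$ singularity under continuity of $y'$) is a point the paper and its sources pass over silently. Your verification of \eqref{ODE1} by Leibniz differentiation of the convolution, with the boundary term at $\tau=t$ producing $\tfrac{2\sin(\pi\alpha)}{\pi}y'(t)$, is exactly the intended ``easy to verify'' step, and the initial condition is immediate as you say.
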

\begin{proof}
	See \cite{Yuan2002,Diethelm2008} for the proof of the theorem.
\end{proof}
\begin{Lemma}\cite{91138334} Let $0<\alpha<1$ and $b\in\Bbb{R}^+$. Then we have:
	\[
	\Gamma(\alpha)=\frac{b^\alpha}{\cos\left(\frac{\pi }{2}\alpha\right)}\int_{0}^{+\infty}t^{\alpha-1}\cos(bt)\,dt,
	\]
	and 
	\[
	\Gamma(\alpha)=\frac{b^\alpha}{\sin\left(\frac{\pi }{2}\alpha\right)}\int_{0}^{+\infty}t^{\alpha-1}\sin(bt)\,dt.
	\]	
\end{Lemma}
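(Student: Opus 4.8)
The plan is to reduce both identities to a single complex-exponential integral and then extract real and imaginary parts. I would start from the classical Euler integral $\Gamma(\alpha)=\int_{0}^{\infty}u^{\alpha-1}e^{-u}\,du$, valid for $\alpha>0$, and aim to establish the single master identity
\[
\int_{0}^{\infty}t^{\alpha-1}e^{-ibt}\,dt=\frac{\Gamma(\alpha)}{b^{\alpha}}\,e^{-i\pi\alpha/2},\qquad b>0,
\]
from which the two stated formulas follow immediately. Indeed, writing $e^{-ibt}=\cos(bt)-i\sin(bt)$ on the left and $e^{-i\pi\alpha/2}=\cos\frac{\pi\alpha}{2}-i\sin\frac{\pi\alpha}{2}$ on the right, and equating real and imaginary parts, gives $b^{\alpha}\int_{0}^{\infty}t^{\alpha-1}\cos(bt)\,dt=\Gamma(\alpha)\cos\frac{\pi\alpha}{2}$ and $b^{\alpha}\int_{0}^{\infty}t^{\alpha-1}\sin(bt)\,dt=\Gamma(\alpha)\sin\frac{\pi\alpha}{2}$; a trivial rearrangement then yields both displayed identities.

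To prove the master identity I would first treat $b=1$ by rotating the contour of integration. Consider the function $f(z)=z^{\alpha-1}e^{-z}$ (principal branch), holomorphic on the open first quadrant, and integrate it around the boundary of the sector $\{re^{i\theta}:0\le r\le R,\ 0\le\theta\le\pi/2\}$. Cauchy's theorem makes the closed-contour integral vanish. The segment along the positive real axis tends to $\Gamma(\alpha)$ as $R\to\infty$, while the segment along the imaginary axis, after the substitution $z=iy$, contributes (up to sign) the term $i^{\alpha}\int_{0}^{R}y^{\alpha-1}e^{-iy}\,dy$. Letting $R\to\infty$ and solving yields $\int_{0}^{\infty}y^{\alpha-1}e^{-iy}\,dy=\Gamma(\alpha)/i^{\alpha}=\Gamma(\alpha)e^{-i\pi\alpha/2}$. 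The general case $b>0$ then follows at once by the scaling $y=bt$.

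The main obstacle is showing that the circular arc of radius $R$ contributes nothing in the limit, and this is exactly where the hypothesis $0<\alpha<1$ is used. On the arc one has $|f(Re^{i\theta})|=R^{\alpha-1}e^{-R\cos\theta}$, so the arc integral is bounded by $R^{\alpha}\int_{0}^{\pi/2}e^{-R\cos\theta}\,d\theta$. Away from $\theta=\pi/2$ the exponential decay kills the $R^{\alpha}$ factor, but near $\theta=\pi/2$, where $\cos\theta\to0$, one must invoke a Jordan-type estimate such as $\cos\theta\ge c\,(\tfrac{\pi}{2}-\theta)$ on $[\pi/4,\pi/2]$; this converts the remaining piece into a bound of order $R^{\alpha-1}$, which tends to $0$ precisely because $\alpha<1$. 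The same constraint guarantees conditional convergence of the oscillatory integrals $\int_{0}^{\infty}t^{\alpha-1}\cos(bt)\,dt$ and $\int_{0}^{\infty}t^{\alpha-1}\sin(bt)\,dt$ at infinity (via a Dirichlet/Abel argument), while $\alpha>0$ secures integrability at the origin; thus the full range $0<\alpha<1$ is exactly what the statement needs. An equivalent route, should one prefer to avoid contour integration, is to start from the Laplace-type identity $\int_{0}^{\infty}t^{\alpha-1}e^{-pt}\,dt=\Gamma(\alpha)p^{-\alpha}$ for $\Re p>0$ and pass to the limit $p=\varepsilon+ib\to ib$ as $\varepsilon\to0^{+}$, justifying the passage by Abel's theorem for the conditionally convergent boundary integral.
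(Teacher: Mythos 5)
Your proof is correct, but it is genuinely more than the paper offers: the paper does not prove this lemma at all, it simply cites the integral table of Zwillinger (Gradshteyn--Ryzhik), where these two formulas appear as standard entries. Your route---establishing the single master identity $\int_{0}^{\infty}t^{\alpha-1}e^{-ibt}\,dt=\Gamma(\alpha)b^{-\alpha}e^{-i\pi\alpha/2}$ by rotating the Euler integral through the first quadrant and then splitting into real and imaginary parts---is the standard self-contained derivation, and your handling of the two delicate points is sound: the Jordan-type bound $\cos\theta=\sin\left(\tfrac{\pi}{2}-\theta\right)\ge\tfrac{2}{\pi}\left(\tfrac{\pi}{2}-\theta\right)$ gives the arc contribution $\mathcal{O}(R^{\alpha-1})\to0$, which is exactly where $\alpha<1$ enters, and the finite-$R$ contour identity even delivers convergence of the oscillatory integral without a separate Dirichlet test. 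One small technical point you should make explicit: the sector $\{re^{i\theta}:0\le r\le R,\ 0\le\theta\le\pi/2\}$ as written contains the branch point $z=0$, so the contour must be indented by a small quarter-circle of radius $\varepsilon$ there; its contribution is $\mathcal{O}(\varepsilon^{\alpha})\to0$ precisely because $\alpha>0$, which is the rigorous form of your remark that $\alpha>0$ ``secures integrability at the origin.'' Your alternative suggestion (taking $p=\varepsilon+ib\to ib$ in $\int_{0}^{\infty}t^{\alpha-1}e^{-pt}\,dt=\Gamma(\alpha)p^{-\alpha}$ and justifying the boundary limit by an Abel argument) is equally valid and avoids contours entirely. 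What your approach buys over the paper's citation is a verifiable argument that pins down exactly why the hypothesis $0<\alpha<1$ is needed at each end of the integration range; what the citation buys is brevity, since the result is classical.
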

\begin{proof}
See \cite{91138334} for the proof of this lemma.
\end{proof}
Now and in this position, we will going to define two new DRs to approximate the Caputo fractional derivative.
\begin{Theorem}\label{Cos}
	(The cosine diffusive representation (CDR)). For $0<\alpha<1$, one can see
	\begin{equation*}\label{CDR}
		{}^{C}D_{0^+}^{\alpha}y(t)=\frac{2\sin(\tfrac{\pi\alpha}{2})}{\pi}\int_{0}^{\infty}z^{\alpha-1}\left(\int_{0}^{t}\cos\left((t-\tau)z\right){y'(\tau)}\,d\tau\right)\,dz=\int_{0}^{\infty}z^{\alpha-1}\omega^C(z,t)\,dz,
	\end{equation*}
	where
	\begin{equation}\label{Kernel_1}
		\omega^C(z,t)=\frac{2\sin(\tfrac{\pi\alpha}{2})}{\pi}\left(\int_{0}^{t}\cos\left((t-\tau)z\right){y'(\tau)}\,d\tau\right).
	\end{equation}
	\begin{proof}
		We first note that:
		\[
		\Gamma(\alpha)\Gamma(1-\alpha)=\frac{\pi}{\sin(\pi \alpha)},\ 0<\alpha<1.
		\]
		Then we have:
		\begin{eqnarray}\label{Eq_5_3}
			{}^{C}D_{0^+}^{\alpha}y(t)&=&\frac{1}{\Gamma(1-\alpha)}\int_{0}^{t}(t-\tau)^{-\alpha}y'(\tau)\,d\tau\nonumber\\
			&=&\frac{\Gamma(\alpha)}{\Gamma(1-\alpha)\Gamma(\alpha)}\int_{0}^{t}(t-\tau)^{-\alpha}y'(\tau)\,d\tau\nonumber\\
			&=&\frac{\sin(\pi\alpha)\Gamma(\alpha)}{\pi}\int_{0}^{t}(t-\tau)^{-\alpha}y'(\tau)\,d\tau\nonumber\\
			&=&\frac{\sin(\pi\alpha)x^{\alpha}}{\pi\cos\left(\tfrac{\pi \alpha}{2}\right)}\int_{0}^{t}\left(\int_{0}^{\infty}\theta^{\alpha-1}\cos(x\theta)\,d\theta\right)(t-\tau)^{-\alpha}y'(\tau)\,d\tau\nonumber\\
			&\stackrel{\theta=(t-\tau)z}{=}&\frac{2x^{\alpha}\sin\left(\tfrac{\pi \alpha}{2}\right)}{\pi}\int_{0}^{\infty}z^{\alpha-1}\left(\int_{0}^{t}\cos\left(x(t-\tau)z\right){y'(\tau)}\,d\tau\right)\,dz\nonumber.
		\end{eqnarray}
		If we take $x=1$,  theorem is proved.
	\end{proof}
\end{Theorem}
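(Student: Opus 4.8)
The plan is to start from the definition \eqref{Cap1} of the Caputo derivative and to insert a parametrized integral representation of the constant $\Gamma(\alpha)$ taken from the preceding Lemma, so that the cosine kernel thereby introduced can be used to cancel the weakly singular factor $(t-\tau)^{-\alpha}$. The whole argument is essentially a change-of-order-of-integration combined with one well-chosen substitution, so the challenge is analytic rather than structural.

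First I would rewrite the prefactor $1/\Gamma(1-\alpha)$ using the reflection formula $\Gamma(\alpha)\Gamma(1-\alpha)=\pi/\sin(\pi\alpha)$, which turns the definition into
\[
{}^{C}D_{0^+}^{\alpha}y(t)=\frac{\sin(\pi\alpha)\,\Gamma(\alpha)}{\pi}\int_0^t (t-\tau)^{-\alpha}y'(\tau)\,d\tau.
\]
Next I would replace the isolated $\Gamma(\alpha)$ by the cosine integral supplied by the Lemma, namely $\Gamma(\alpha)=\frac{x^\alpha}{\cos(\pi\alpha/2)}\int_0^\infty \theta^{\alpha-1}\cos(x\theta)\,d\theta$, keeping the scale $x\in\mathbb{R}^+$ as a free parameter to be fixed at the very end. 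This introduces an extra integration variable $\theta$ together with the oscillatory kernel $\cos(x\theta)$.

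The decisive step is the substitution $\theta=(t-\tau)z$ inside the $\theta$-integral. Under this change $\theta^{\alpha-1}\,d\theta = (t-\tau)^{\alpha}z^{\alpha-1}\,dz$, so the factor $(t-\tau)^{\alpha}$ exactly cancels the singular weight $(t-\tau)^{-\alpha}$ coming from the Caputo kernel. After interchanging the order of the $z$- and $\tau$-integrations and using the double-angle identity $\sin(\pi\alpha)=2\sin(\pi\alpha/2)\cos(\pi\alpha/2)$ to absorb the $\cos(\pi\alpha/2)$ in the denominator, the expression collapses to
\[
{}^{C}D_{0^+}^{\alpha}y(t)=\frac{2x^{\alpha}\sin(\pi\alpha/2)}{\pi}\int_0^\infty z^{\alpha-1}\Big(\int_0^t \cos\big(x(t-\tau)z\big)\,y'(\tau)\,d\tau\Big)\,dz,
\]
and setting $x=1$ yields the claimed identity together with the kernel $\omega^C(z,t)$ in \eqref{Kernel_1}.

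I expect the main obstacle to be the rigorous justification of the interchange of integration order. The inner $z$-integral $\int_0^\infty z^{\alpha-1}\cos\big(x(t-\tau)z\big)\,dz$ converges only conditionally, since $z^{\alpha-1}$ is not integrable at infinity and the integrand is merely oscillatory; hence a direct appeal to Fubini's theorem is unavailable. I would address this by working first with a regularized kernel (for instance inserting a damping factor $e^{-\varepsilon z}$, or truncating the $z$-integral at a finite $R$), applying Fubini in the resulting absolutely convergent setting, and then passing to the limit $\varepsilon\to 0^+$ (or $R\to\infty$) using the continuity of $y'$ and standard estimates for oscillatory integrals. Once this swap is legitimized, the algebraic cancellation of $(t-\tau)^{-\alpha}$ and the trigonometric simplification are routine.
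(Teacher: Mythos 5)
Your proposal is correct and follows essentially the same route as the paper's own proof: reflection formula, the Lemma's cosine integral for $\Gamma(\alpha)$ with a free scale $x$, the substitution $\theta=(t-\tau)z$, the double-angle identity, and finally $x=1$. Your added regularization argument to justify the interchange of integration order is a welcome refinement, since the paper performs that swap purely formally without comment.
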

\begin{Theorem}\label{Sin}
	(The sine diffusive representation (SDR)). For $0<\alpha<1$, we have
	\begin{equation*}\label{SDR}
		{}^{C}D_{0^+}^{\alpha}y(t)=\frac{2\cos(\tfrac{\pi\alpha}{2})}{\pi}\int_{0}^{\infty}z^{\alpha-1}\left(\int_{0}^{t}\sin\left((t-\tau)z\right){y'(\tau)}\,d\tau\right)\,dz=\int_{0}^{\infty}z^{\alpha}\omega^S(z,t)\,dz,
	\end{equation*}
	where
	\begin{equation}\label{Kernel_2}
		\omega^S(z,t)=\frac{2\cos(\tfrac{\pi\alpha}{2})}{z\pi}  \left(\int_{0}^{t}\sin\left((t-\tau)z\right){y'(\tau)}\,d\tau\right).
	\end{equation}
	\begin{proof}
		The proof is similar to the proof of Theorem \ref{Cos}. 
	\end{proof}
\end{Theorem}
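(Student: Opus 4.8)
The plan is to follow the argument of Theorem \ref{Cos} essentially verbatim, the only change being that the cosine integral representation of $\Gamma(\alpha)$ is replaced by the sine representation supplied by the Lemma. Concretely, I would begin from the defining formula \eqref{Cap1} and, exactly as in the cosine case, use the reflection formula $\Gamma(\alpha)\Gamma(1-\alpha)=\pi/\sin(\pi\alpha)$ to rewrite the prefactor as $1/\Gamma(1-\alpha)=\sin(\pi\alpha)\Gamma(\alpha)/\pi$. This turns the Caputo derivative into
\[
{}^{C}D_{0^+}^{\alpha}y(t)=\frac{\sin(\pi\alpha)\Gamma(\alpha)}{\pi}\int_{0}^{t}(t-\tau)^{-\alpha}y'(\tau)\,d\tau.
\]

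Next I would substitute the second identity of the Lemma, namely $\Gamma(\alpha)=\dfrac{x^{\alpha}}{\sin(\tfrac{\pi}{2}\alpha)}\int_{0}^{\infty}\theta^{\alpha-1}\sin(x\theta)\,d\theta$ (taking the free parameter $b=x$), for the single factor $\Gamma(\alpha)$. The crucial algebraic simplification is the half-angle identity
\[
\frac{\sin(\pi\alpha)}{\sin(\tfrac{\pi\alpha}{2})}=\frac{2\sin(\tfrac{\pi\alpha}{2})\cos(\tfrac{\pi\alpha}{2})}{\sin(\tfrac{\pi\alpha}{2})}=2\cos\!\left(\tfrac{\pi\alpha}{2}\right),
\]
which is what produces the prefactor $2\cos(\tfrac{\pi\alpha}{2})/\pi$ in the statement (mirroring how $\sin(\pi\alpha)/\cos(\tfrac{\pi\alpha}{2})=2\sin(\tfrac{\pi\alpha}{2})$ produced the cosine prefactor). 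After interchanging the order of integration and performing the change of variable $\theta=(t-\tau)z$, so that $\theta^{\alpha-1}\,d\theta=(t-\tau)^{\alpha}z^{\alpha-1}\,dz$ and the factor $(t-\tau)^{\alpha}$ cancels the $(t-\tau)^{-\alpha}$ already present, I arrive at
\[
{}^{C}D_{0^+}^{\alpha}y(t)=\frac{2x^{\alpha}\cos(\tfrac{\pi\alpha}{2})}{\pi}\int_{0}^{\infty}z^{\alpha-1}\left(\int_{0}^{t}\sin\!\left(x(t-\tau)z\right)y'(\tau)\,d\tau\right)\,dz,
\]
and setting $x=1$ yields the first displayed equality. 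The second equality is then immediate: by the definition \eqref{Kernel_2} one has $z^{\alpha}\omega^{S}(z,t)=z^{\alpha-1}\cdot\tfrac{2\cos(\pi\alpha/2)}{\pi}\int_{0}^{t}\sin((t-\tau)z)y'(\tau)\,d\tau$, so the two integrands coincide.

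I expect the only genuine obstacle to be the justification of the interchange of the two integrals, precisely as in the cosine proof. This is in fact slightly more delicate here than for the cosine kernel, because the oscillatory integral $\int_{0}^{\infty}\theta^{\alpha-1}\sin(x\theta)\,d\theta$ converges only conditionally for $0<\alpha<1$, so a bare appeal to Fubini's theorem is not automatic. To make the step rigorous I would either treat the Lemma's formula as an already-established (regularized) identity and pass to the limit on truncated integrals $\int_{0}^{R}$ via dominated convergence, or impose mild decay and smoothness hypotheses on $y'$ that guarantee absolute convergence of the resulting double integral. Apart from this analytic point, the computation is the same formal manipulation carried out for Theorem \ref{Cos}, which is exactly why the authors content themselves with remarking that the proof is similar.
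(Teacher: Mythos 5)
Your proposal is correct and coincides with the paper's intended argument: the paper's proof of Theorem \ref{Sin} is literally ``similar to the proof of Theorem \ref{Cos}'', i.e.\ the reflection formula, the sine integral representation of $\Gamma(\alpha)$ from the Lemma, the substitution $\theta=(t-\tau)z$, and finally $x=1$, exactly the steps you carry out (with the correct half-angle simplification $\sin(\pi\alpha)/\sin(\tfrac{\pi\alpha}{2})=2\cos(\tfrac{\pi\alpha}{2})$). Your additional remark about the conditional convergence of $\int_{0}^{\infty}\theta^{\alpha-1}\sin(x\theta)\,d\theta$ and the need to justify the interchange of integrals goes beyond what the paper records and only strengthens the sketch.
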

An important property of CDR and SDR is given in the next theorems.
\begin{Theorem}\label{Diff}
	Let $0<\alpha<1$.
	\begin{itemize}
		\item  For a given function $y$ for which its second derivative exists on $[0,T]$, $\omega^C(z,t)$ (for fixed $z>0$) satisfies the following second-order differential equation:
		\begin{equation}\label{ODE2}
			\begin{cases}
				\displaystyle\frac{\partial^2 \omega^C}{\partial t^2}+z^2\omega^C=\frac{2\sin(\tfrac{\pi\alpha}{2})}{\pi} y''(t),\\
				\displaystyle\omega^C(z,0)=0,\ \frac{\partial}{\partial t}\omega^C(z,0)=\frac{2\sin(\tfrac{\pi\alpha}{2})}{\pi} y'(0).
			\end{cases}
		\end{equation}
		\item For a given function $y$ for which its first derivative exists on $[0,T]$, $\omega^S(z,t)$ (for fixed $z>0$) satisfies the following second-order differential equation:
		\begin{equation}\label{ODE3}
			\begin{cases}
				\displaystyle\frac{\partial^2 \omega^S}{\partial t^2}+z^2\omega^S=\frac{2\cos(\tfrac{\pi\alpha}{2})}{\pi}\ y'(t),\\ 	\displaystyle \omega^S(z,0)=\frac{\partial}{\partial t}\omega^S(z,0)=0.
			\end{cases}
		\end{equation}
	\end{itemize}
	\begin{proof}
		The proofs are straightforward.
	\end{proof}
\end{Theorem}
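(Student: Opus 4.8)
The plan is to treat both $\omega^C$ and $\omega^S$ as convolution integrals in $t$ and to differentiate twice under the integral sign by the Leibniz rule for a variable upper limit. For the cosine case, writing $C=\tfrac{2\sin(\pi\alpha/2)}{\pi}$, I would start from \eqref{Kernel_1} and compute $\partial_t\omega^C$. The Leibniz rule produces the boundary term $\cos(0)\,y'(t)=y'(t)$ together with an integral containing $\partial_t\cos((t-\tau)z)=-z\sin((t-\tau)z)$, giving $\partial_t\omega^C=C\,y'(t)-Cz\int_0^t\sin((t-\tau)z)y'(\tau)\,d\tau$. Evaluating at $t=0$ makes the integral vanish and immediately yields both $\omega^C(z,0)=0$ and $\partial_t\omega^C(z,0)=C\,y'(0)$, matching the initial data in \eqref{ODE2}.

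Next I would differentiate once more. Differentiating the boundary term $C\,y'(t)$ gives $C\,y''(t)$, which is exactly where the hypothesis that $y''$ exists on $[0,T]$ enters. Differentiating the remaining integral via Leibniz produces a vanishing boundary term (since $\sin 0=0$) and a second factor of $z$, so that $\sin$ turns back into $\cos$ with coefficient $-z^2$; hence the surviving integral is $-z^2\int_0^t\cos((t-\tau)z)y'(\tau)\,d\tau$, which is precisely $-z^2\omega^C(z,t)$ after restoring the factor $C$. Collecting terms gives $\partial_t^2\omega^C=C\,y''(t)-z^2\omega^C$, that is, \eqref{ODE2}.

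For the sine case I would proceed identically from \eqref{Kernel_2} with $D=\tfrac{2\cos(\pi\alpha/2)}{z\pi}$. Here the first differentiation annihilates the boundary term (because $\sin 0=0$) and leaves $\partial_t\omega^S=Dz\int_0^t\cos((t-\tau)z)y'(\tau)\,d\tau$, so both initial conditions $\omega^S(z,0)=\partial_t\omega^S(z,0)=0$ follow at once. The second differentiation now produces the boundary term $Dz\cos(0)\,y'(t)=Dz\,y'(t)=\tfrac{2\cos(\pi\alpha/2)}{\pi}\,y'(t)$ together with $-z^2\omega^S$, yielding \eqref{ODE3}. The point worth emphasising is that the weaker regularity demanded for $\omega^S$ (only $y'$, not $y''$) is explained precisely by the fact that the $y'$ boundary term appears here only at the \emph{second} differentiation, so no derivative of $y'$ is ever taken.

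There is no genuine obstacle: the whole argument is an application of the Leibniz rule, and the only care needed is in bookkeeping the boundary terms and in matching the trigonometric factor so that the leftover integral coincides with the original kernel multiplied by $-z^2$. To make the interchange of differentiation and integration fully rigorous, one would note that the integrand and its first two $t$-derivatives are continuous on the compact set $[0,T]$ under the stated smoothness of $y$, which justifies differentiating under the integral sign.
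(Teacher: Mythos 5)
Your proof is correct and is exactly the ``straightforward'' verification the paper has in mind: the paper omits the argument entirely, and your direct double application of the Leibniz rule, with the boundary terms supplying both the initial data and the forcing terms, fills it in faithfully. Your closing observation---that $\omega^S$ needs only $y'$ because its $y'$ boundary term first appears at the second differentiation---correctly accounts for the differing regularity hypotheses in \eqref{ODE2} and \eqref{ODE3}.
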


In the following some important remarks concerning  the mentioned second-order differential equations \eqref{ODE2} and \eqref{ODE3} are given.
\begin{Remark} The following remarks should be noted here: 
	\begin{itemize} 
		\item In contrast to the other types of the DRs which coupled with a first-order differential equation, the SDR and CDR are coupled with a second-order differential equations.
		\item As we know, the second-order differential equations \eqref{ODE2} and \eqref{ODE3} could be converted to a system of first-order differential equations. Thus, the SDR and CDR can be also considered as the classical DRs. 
		\item  The second derivative of the given function $y(t)$, which appears in Eq. \eqref{ODE2} seems as a bad point of CDR, but, in fact, as we will see later, in application we don't need to evaluate $y''(t)$.
	\end{itemize}
\end{Remark}
Due to the fact that the classical DRs of Caputo fractional derivative are usually coupled with a first-order differential equation, it is worthy to convert  \eqref{ODE2} and \eqref{ODE3} to the system of first-order differential equations.
\begin{Theorem}\label{SystemDiff}
	Let $0<\alpha<1$.
	\begin{itemize}
		\item  For a given function $y$ for which its second derivative exists on $[0,T]$, and  $\omega^C(z,t)=x_1(z,t)$ (for fixed $z>0$), then $x_1(z,t)$ satisfies in the following system of first-order differential equations:
		\begin{equation}\label{SysODE2}
			\begin{cases}
				\displaystyle\frac{\partial x_1}{\partial t}=x_2(z,t),\\
				\displaystyle\frac{\partial x_2}{\partial t}=-z^2x_1(z,t)+\frac{2\sin(\tfrac{\pi\alpha}{2})}{\pi} y''(t),\\
				\displaystyle x_1(z,0)=0,\ x_2(z,0)=\frac{2\sin(\tfrac{\pi\alpha}{2})}{\pi} y'(0).
			\end{cases}
		\end{equation}
		\item For a given function $y$ for which its first derivative exists on $[0,T]$, and assume $\omega^S(z,t)=x_1(z,t)$ (for fixed $z>0$), where $x_1(z,t)$ satisfies in the following system of first-order differential equations:
		\begin{equation}\label{SysODE3}
			\begin{cases}
				\displaystyle\frac{\partial x_1}{\partial t}=x_2(z,t),\\
				\displaystyle\frac{\partial x_2}{\partial t}=-z^2x_1(z,t)+\frac{2\cos(\tfrac{\pi\alpha}{2})}{\pi}\ y'(t),\\ 	\displaystyle x_1(z,0)=x_2(z,0)=0.
			\end{cases}
		\end{equation}
	\end{itemize}
	\begin{proof}
		The proofs are straightforward.
	\end{proof}
\end{Theorem}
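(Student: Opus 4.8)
The plan is to obtain each first-order system from the corresponding scalar second-order initial value problem of Theorem~\ref{Diff} by the standard order-reduction substitution, and then to transfer the initial data. Since the two cases share the same structure, I would carry out the cosine case in full and remark that the sine case is verbatim with $\sin(\tfrac{\pi\alpha}{2})$ replaced by $\cos(\tfrac{\pi\alpha}{2})$, with $y''$ replaced by $y'$, and with the second initial condition set to zero.

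For the cosine case, I would set $x_1(z,t):=\omega^C(z,t)$ and introduce the auxiliary unknown $x_2(z,t):=\frac{\partial}{\partial t}x_1(z,t)$. The first equation of \eqref{SysODE2}, namely $\partial_t x_1 = x_2$, then holds by definition. Differentiating once more and using \eqref{ODE2} gives $\partial_t x_2 = \partial_{tt}\omega^C = -z^2\omega^C + \frac{2\sin(\tfrac{\pi\alpha}{2})}{\pi}y''(t) = -z^2 x_1 + \frac{2\sin(\tfrac{\pi\alpha}{2})}{\pi}y''(t)$, which is precisely the second equation of \eqref{SysODE2}. The initial conditions transfer immediately: $x_1(z,0)=\omega^C(z,0)=0$ from \eqref{ODE2}, while $x_2(z,0)=\partial_t\omega^C(z,0)=\frac{2\sin(\tfrac{\pi\alpha}{2})}{\pi}y'(0)$ is read directly off the initial data in \eqref{ODE2}. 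This yields exactly the system \eqref{SysODE2}.

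To confirm that the equivalence is genuine rather than a one-way implication, I would note that substituting $x_2=\partial_t x_1$ back into the second equation of \eqref{SysODE2} recovers \eqref{ODE2} together with both initial conditions, so no solutions are gained or lost in the reduction; the regularity hypotheses (existence of $y''$ on $[0,T]$ in the cosine case, of $y'$ in the sine case) are exactly what guarantee that the forcing term is well defined and that $x_1$ is twice differentiable in $t$. The sine system \eqref{SysODE3} then follows by the same two steps from \eqref{ODE3}, the only difference being that here $\partial_t\omega^S(z,0)$ vanishes, so $x_2(z,0)=0$.

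There is essentially no analytic obstacle: the argument is a mechanical reduction of order. The only points demanding care are bookkeeping ones, namely correctly reading $x_2(z,0)$ off the first-derivative initial condition in \eqref{ODE2}, and keeping the coefficient $\tfrac{2\sin(\tfrac{\pi\alpha}{2})}{\pi}$ versus $\tfrac{2\cos(\tfrac{\pi\alpha}{2})}{\pi}$ together with the forcing $y''$ versus $y'$ attached to the appropriate case.
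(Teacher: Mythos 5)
Your proposal is correct and matches the paper's intent exactly: the paper dismisses the proof as ``straightforward,'' and the standard order-reduction substitution $x_1=\omega^{C}$ (resp.\ $\omega^{S}$), $x_2=\partial_t x_1$ applied to the second-order problems \eqref{ODE2} and \eqref{ODE3} of Theorem~\ref{Diff}, with the initial data read off directly, is precisely that straightforward argument. Your added remark on the two-way equivalence and the regularity needed for the forcing terms is a harmless strengthening, not a deviation.
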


The numerical parts of the newly presented methods CDR and SDR are given in the next subsection. 
\subsection{The numerical parts of CDR and SDR}\label{Sec_2.1}
An important question which arises here is that: How can one approximate the Caputo fractional derivative of order $0<\alpha<1$ based on CDR and SDR? 

In fact, the procedure to construct the desired approximation for the  Caputo fractional derivative of order $0<\alpha<1$ of the given function $y(t)$ has two steps:
\begin{enumerate}
	\item  In the first step, we need to choose a suitable quadrature formula to approximate the following semi-infinite integrals numerically:
	\begin{equation}\label{IntCos_1}
		\int_{0}^{+\infty}z^{\alpha-1}\omega^C(z,t)\,dz\approx\sum_{k=1}^{N}c_k\ \omega^C(z_k^c,t),
	\end{equation}
	and 
	\begin{equation}\label{IntSin_1}
		\int_{0}^{+\infty}z^{\alpha}\omega^S(z,t)\,dz\approx\sum_{k=1}^{N}s_k\ \omega^S(z_k^s,t),
	\end{equation}
	where $z_k^c$, $z_k^s$  and  $c_k$, $s_k$ are the nods and weights of the quadrature formulae, respectively. 
	\item In the second step, we need to approximate $\omega^C(z_k^s,t)$ and $\omega^S(z_k^s,t)$ numerically. To do so, we solve the obtained systems   of first-order \eqref{SysODE2} and \eqref{SysODE3} for $z=z_k^c$ and $z=z_k^s$, respectively. In practice, various numerical algorithms with step size $h$ like as Runge-Kutta scheme or a linear multistep method
	can be used to solve these systems of differential equations. We denote $\omega^C_h(z_k^s,t_i)$ and $\omega^S_h(z_k^s,t_i)$ as the numerical approximations of $\omega^C(z_k^s,t)$ and $\omega^S(z_k^s,t)$, respectively, obtained from systems  \eqref{SysODE2} and \eqref{SysODE3}, where $t\in[0,T]$ and $h$ is the step size with 
	\[
	h=\frac{T}{n-1},\ n\in\Bbb{N},
	\]
	and denote by $t_k=(k-1)h$, so $t_1=0$ and $ t_n=T$. 
	
	Now, substituting $\omega^C_h(z_k^s,t_i)$ and $\omega^S_h(z_k^s,t_i)$ into \eqref{IntCos_1} and \eqref{IntSin_1}, respectively, the Caputo fractional derivatives of order $0<\alpha<1$ of the given function $y(t)$ at $t_i,\ i=1,2,\cdots,n$ are obtained as:
	\begin{equation}\label{IntCos_1Frac}
		{}^{C}D_{0^+}^{\alpha}y(t)\Big|_{t=t_i}=\int_{0}^{+\infty}z^{\alpha-1}\omega^C(z,t_i)\,dz\approx\sum_{k=1}^{N}c_k\ \omega^C(z_k^c,t_i)\approx\sum_{k=1}^{N}c_k\ \omega^C_h(z_k^c,t_i),\ i=1,2,\cdots,n,
	\end{equation}
	and 
	\begin{equation}\label{IntSin_1Frac}
		{}^{C}D_{0^+}^{\alpha}y(t)\Big|_{t=t_i}=\int_{0}^{+\infty}z^{\alpha}\omega^S(z,t_i)\,dz\approx\sum_{k=1}^{N}s_k\ \omega^C(z_k^s,t_i)\approx\sum_{k=1}^{N}s_k\ \omega^S_h(z_k^s,t_i),\ i=1,2,\cdots,n.
	\end{equation}
\end{enumerate}

Next remark provides some important issues to obtain numerical approximations of Caputo fractional derivative of order $0<\alpha<1$.
\begin{Remark}
	For the first step of the numerical method which concerns with the use of a suitable quadrature rules, it is worthy to point out that various types of the quadrature rules have been proposed to handle the semi-infinite integral of the DR, recently. In the following we list some of them.
	
	To handle semi-infinite integral, in fact, Yuan and Agrawal proposed the classical Gauss-Laguerre quadrature rule \cite{Yuan2002}.  Then Lu and Hanyga suggested to split the semi-infinite integral into two integrals $[0,c]$ and $[c,+\infty)$ \cite{LuHanyga2005}. They used the Gauss-Jacobi and shifted Gauss-Laguerre rules to compute the integral over $[0,c]$  and  $[c,+\infty)$, respectively. Generalized Gauss-Laguerre and Gauss-Jacobi quadrature rules have been successfully carried out by K. Diethelm in \cite{Diethelm2008,Diethelm2009}. Composite  Gauss-Jacobi quadrature rule is used by Hinze et al. \cite{Hinze2019}. 
	
	For the second step, which provides some ODE solver, the backward Euler and trapezoidal methods have been suggested (See \cite{Diethelm2021,Diethelm2008} for more comments on the ODE solvers). 
\end{Remark}
We also point out that, in our computations, due to the asymptotic behaviors of the functions $\omega^C(z,t)$ and $\omega^S(z,t)$ when $z\to 0$ and $z\to+\infty$,  we will use the generalized Gauss-Laguerre formula to approximate the semi-infinite integrals (See Theorem \ref{AsymtoticCDRandSDR} and Remark \ref{RemWeight}). 

This means that for the CDR semi-infinite  integral the generalized Gauss-Laguerre with respect to the weight function $w(z)=z^{\alpha-1}e^{-z}$ is carried out. On the other hand, for $k=1,2,\cdots,n$, we write:
\begin{equation}\label{CDRINT}
	{}^{C}D_{0^+}^{\alpha}y(t)\Big|_{t=t_k}=
	\int_{0}^{\infty}z^{\alpha-1} e^{-z}\left[ e^{z}\omega^C(z,t_k)\right]\,dz\approx\sum_{i=1}^{N}w_i^{(\alpha-1)}\ e^{z_i^{(\alpha-1)}}\omega^C(z_i^{(\alpha-1)},t_k),
\end{equation}
where  $z_i^{(\alpha-1)}$ and $w_i^{(\alpha-1)}$ are the Gauss-Laguerre nodes and weights associated with the weight function $w(z)=z^{\alpha-1}e^{-z}$.

Now, we need to approximate $\omega^C(z_i^{(\alpha-1)},t_k)$  (by $\omega_h^C(z_i^{(\alpha-1)},t_k)$) numerically. So, the following backward Euler and the trapezoidal methods for \eqref{SysODE2} with $z=z_i^{(\alpha-1)},\ i=1,2,\cdots,N$ are suggested as:
\begin{equation}\label{Syatem_11}
	\begin{cases}
		x_1^E(z,t_k)=x_1^E(z,t_{k-1})+hx_2^E(z,t_{k-1}),\\
		x_2^E(z,t_k)=\frac{1}{1+z^2h^2}\left[x_2^E(z,t_{k-1})-z^2hx_1^E(z,t_{k-1})+\dfrac{2\sin(\tfrac{\pi\alpha}{2})}{\pi} \left(y'(t_k)-y'(t_{k-1})\right)\right],
	\end{cases},\ k=2,3,...,n,
\end{equation}
and (for $\ k=2,3,...,n$)
\begin{equation}\label{Syatem_111}
	\begin{cases}
		x_1^T(z,t_k)=x_1^T(z,t_{k-1})+\frac{h}{2}\left[x_2^T(z,t_{k-1})+x_2^E(z,t_{k})\right],\\
		x_2^T(z,t_k)=\frac{1}{1+\frac{z^2h^2}{4}}\left[\left(1-\frac{z^2h^2}{4}\right)x_2^T(z,t_{k-1})-z^2hx_1^T(z,t_{k-1})+\dfrac{2\sin(\tfrac{\pi\alpha}{2})}{\pi} \left(y'(t_k)-y'(t_{k-1})\right)\right],
	\end{cases}
\end{equation}
respectively, where
\begin{equation}\label{InitCDR}
	x_1(z,t_1)=0,\ x_2(z,t_1)=\frac{2\sin(\tfrac{\pi\alpha}{2})}{\pi} y'(0).
\end{equation}
Substituting  the solutions $x_1^E(z_i^{(\alpha-1)},t_k)$ and $x_1^T(z_i^{(\alpha-1)},t_k)$ as approximations of $\omega_h^C(z_i^{(\alpha-1)},t_k)$ into \eqref{CDRINT}, the approximations of the Caputo fractional derivative are obtained.
\begin{Remark}
	Due to the initial conditions \eqref{InitCDR} and to solve systems \eqref{Syatem_11} and \eqref{Syatem_111} numerically, we need to approximate $y'(0)$. So, we can use the following approximation formula:
	 \[
	y'(0)\approx \frac{y(h)-y(0)}{h}.
	\] 
\end{Remark}
Similarly, for the SDR semi-infinite  integral the generalized Gauss-Laguerre with respect to the weight function $w(z)=z^{\alpha}e^{-z}$ is used. This means that, for $k=1,2,\cdots,n$, we have:
\begin{equation}\label{SDRINT}
	{}^{C}D_{0^+}^{\alpha}y(t)\Big|_{t=t_k}=\int_{0}^{\infty}z^{\alpha} e^{-z}\left[e^{z}\omega^S(z,t_k)\right]\,dz\approx\sum_{i=1}^{N}w_i^{(\alpha)}\ e^{z_i^{(\alpha)}}\omega^S(z_i^{(\alpha)},t_k),
\end{equation}
where  $z_i^{(\alpha)}$ and $w_i^{(\alpha)}$ are the Gauss-Laguerre nodes and weights associated with the weight function $w(z)=z^{\alpha}e^{-z}$. 

Finally and similar to the previous method, to approximate $\omega^S(z_i^{(\alpha)},t_k)$ (by $\omega^S_h(z_i^{(\alpha)},t_k)$) the following
backward Euler method together with the trapezoidal method for \eqref{SysODE3} with $z=z_i^{(\alpha)},\ i=1,2,\cdots,N$ are given as:
\begin{equation}\label{Syatem_22}
	\begin{cases}
		x_1^E(z,t_k)=x_1^E(z,t_{k-1})+hx_2^E(z,t_{k-1}),\\
		x_2^E(z,t_k)=\frac{1}{1+z^2h^2}\left[x_2^E(z,t_{k-1})-z^2hx_1^E(z,t_{k-1})+\dfrac{2\cos(\tfrac{\pi\alpha}{2})}{\pi} \left(y(t_k)-y(t_{k-1})\right)\right],
	\end{cases},\ k=2,3,...,n,
\end{equation}
and (for $\ k=2,3,...,n$)
\begin{equation}\label{Syatem_222}
	\begin{cases}
		x_1^T(z,t_k)=x_1^T(z,t_{k-1})+\frac{h}{2}\left[x_2^T(z,t_{k-1})+x_2^E(z,t_{k})\right],\\
		x_2^T(z,t_k)=\frac{1}{1+\frac{z^2h^2}{4}}\left[\left(1-\frac{z^2h^2}{4}\right)x_2^T(z,t_{k-1})-z^2hx_1^T(z,t_{k-1})+\dfrac{2\sin(\tfrac{\pi\alpha}{2})}{\pi} \left(y(t_k)-y(t_{k-1})\right)\right],
	\end{cases}
\end{equation}
respectively, where
\[
x_1(z,t_1)=0,\ x_2(z,t_1)=0,
\]
Now, plugging the obtained solutions $x_1^E(z_i^{(\alpha)},t_k)$ and $x_1^T(z_i^{(\alpha)},t_k)$ as approximations of $\omega^S_h(z_i^{(\alpha)},t_k)$ into \eqref{SDRINT}, the approximations of Caputo fractional derivative can be obtained.
\subsection{Error analysis of CDR and SDR}\label{Sec_2.2}
The main goal of this subsection is to provide the error analysis of the approximation methods CDR and SDR when the parameters $N$ (the number of integration points in the generalized Gauss-Laguerre formula) and $h$ (the step size of the ODE solvers) vary. Thanks to the fact that the CDR and SDR approximations are constructed from two steps (ODE solver together with the quadrature formula), so, it is natural to take
into account both the errors of the generalized Gauss-Laguerre quadrature and  the ODE solvers, to obtain the complete error analysis of the introduced methods. To do so, we denote the CDR and SDR approximations of the Caputo fractional derivatives of order $\alpha\in(0,1)$ by:
\[
{}^{C}D_{0^+,C,N,h}^{\alpha}y(t)=\sum_{i=1}^{N}w_i^{(\alpha-1)}\ e^{z_i^{(\alpha-1)}}\omega^C_h(z_i^{(\alpha-1)},t),
\]
and 
\[
{}^{C}D_{0^+,S,N,h}^{\alpha}y(t)=\sum_{i=1}^{N}w_i^{(\alpha)}\ e^{z_i^{(\alpha)}}\omega^S_h(z_i^{(\alpha)},t),
\]
respectively. 

Now in the next subsections, the error analysis of  the generalized Gauss-Laguerre formula together with the ODE solver is provided.
\subsubsection{The contribution of the generalized Gauss-Laguerre formula }
To obtain the error analysis of the generalized Gauss-Laguerre formula,  we need to define:
\begin{eqnarray}\label{Resid_CDR}
	R^{\alpha}_{C,N,h}y(t)&:=&{}^{C}D_{0^+}^{\alpha}y(t)-{}^{C}D_{0^+,C,N,h}^{\alpha}y(t)\nonumber\\
	&=&\int_{0}^{+\infty}z^{\alpha-1}\omega^C(z,t)\,dz-\sum_{i=1}^{N}w_i^{(\alpha-1)}\ e^{z_i^{(\alpha-1)}}\omega^C_h(z_i^{(\alpha-1)},t)\nonumber\\
	&=&R_{C,N,\alpha-1}^{GGL}\left[ e^{(.)}\omega^C(.,t)\right]+Q_{C,N,\alpha-1}^{GGL}\left[ e^{(.)}E^C_h(.,t)\right], 
\end{eqnarray}
where 
\begin{equation}\label{ErrQuadCDR}
	R_{C,N,\alpha-1}^{GGL}\left[ e^{(.)}\omega^C(.,t)\right]:=\int_{0}^{+\infty}z^{\alpha-1}\omega^C(z,t)\,dz-\sum_{i=1}^{N}w_i^{(\alpha-1)}\ e^{z_i^{(\alpha-1)}}\omega^C(z_i^{(\alpha-1)},t),
\end{equation}
is the error of generalized Gauss-Laguerre and
\begin{equation}\label{ErrODECDR}
	Q_{C,N,\alpha-1}^{GGL}\left[ e^{(.)}E^C_h(.,t)\right]:=\sum_{i=1}^{N}w_i^{(\alpha-1)}\ e^{z_i^{(\alpha-1)}}\left[\omega^C(z_i^{(\alpha-1)},t)-\omega^C_h(z_i^{(\alpha-1)},t)\right],
\end{equation}
is the error of the ODE solver. Similarly, we denote:
\begin{eqnarray}\label{Resid_SDR}
	R^{\alpha}_{S,N,h}y(t)&:=&{}^{C}D_{0^+}^{\alpha}y(t)-{}^{C}D_{0^+,S,N,h}^{\alpha}y(t)\nonumber\\
	&=&\int_{0}^{+\infty}z^{\alpha}\omega^S(z,t)\,dz-\sum_{i=1}^{N}w_i^{(\alpha)}\ e^{z_i^{(\alpha)}}\omega^S_h(z_i^{(\alpha)},t)\nonumber\\
	&=&R_{S,N,\alpha}^{GGL}\left[ e^{(.)}\omega^S(.,t)\right]+Q_{S,N,\alpha}^{GGL}\left[ e^{(.)}E^S_h(.,t)\right], 
\end{eqnarray}
where 
\begin{equation}\label{ErrQuadSDR}
	R_{S,N,\alpha}^{GGL}\left[ e^{(.)}\omega^S(.,t)\right]:=\int_{0}^{+\infty}z^{\alpha}\omega^S(z,t)\,dz-\sum_{i=1}^{N}w_i^{(\alpha)}\ e^{z_i^{(\alpha)}}\omega^S(z_i^{(\alpha)},t),
\end{equation}
denotes the error of generalized Gauss-Laguerre and
\begin{equation}\label{ErrODESDR}
	Q_{S,N,\alpha}^{GGL}\left[ e^{(.)}E^S_h(.,t)\right]:=\sum_{i=1}^{N}w_i^{(\alpha)}\ e^{z_i^{(\alpha)}}\left[\omega^S(z_i^{(\alpha)},t)-\omega^S_h(z_i^{(\alpha)},t)\right],
\end{equation}
used for the error of the ODE solver.

In this position, we start to analysis the errors of the generalized Gauss-Laguerre formulae \eqref{ErrQuadCDR} and \eqref{ErrQuadSDR}. To reach this aim, we need to have the behaviour of the integrands $\omega^C(z,t)$ and $\omega^S(z,t)$ when $z\to0$ and $z\to\infty$.
  Here, the symbol $a(v)\sim b(v)$ means that
there exist two strictly positive constants $A$ and $B$ such that: 
\[
\left|\frac{a(v)}{b(v)}\right|\in [A, B],
\] 
as $v$ tends to the indicated limit.
\begin{Theorem}\label{AsymtoticCDRandSDR}
	Let $t \in(0, T)$ be fixed and $0<\alpha<1$.
	\begin{enumerate}
		\item[(a)] Assume that there exists some constant $C>0$, such that for all $t\in(0,T)$ we have  $|y'(t)|>C$ then functions $\omega^C(.,t)$ and  $\omega^S(.,t)$ defined  in \eqref{Kernel_1} and \eqref{Kernel_2}, respectively behave as:
		\begin{eqnarray}\label{Asymp_1}
			&&z^{\alpha-1}\omega^C(z,t)\sim z^{\alpha-1}\ \ \ \text{as}\ z\to 0,\\
			&& z^{\alpha}\omega^S(z,t)\sim z^{\alpha}\ \ \ \text{as}\ z\to 0.
		\end{eqnarray}
		\item[(b)] Let $y(t)\in C^2[0,T]$ and $y(0)=y'(0)=0$. Assume that $y(t)$ and $y'(t)$ are  of exponential order, then we have:
		\begin{eqnarray}\label{Asymp_2}
			&&z^{\alpha-1}\omega^C(z,t)\sim z^{\alpha-3}\ \ \ \text{as}\ z\to +\infty.
		\end{eqnarray}
		\item[(c)] Let $y(t)\in C^1[0,T]$ and $y(t)$ and $y'(t)$ be  of exponential order then we have:
		\begin{eqnarray}\label{Asymp_3}
			&& z^{\alpha}\omega^S(z,t)\sim z^{\alpha-2}\ \ \ \text{as}\ z\to +\infty.
		\end{eqnarray}
	\end{enumerate}
\begin{proof}
	For  part (a), using the integration by part, yields:
	\begin{eqnarray*}
		\int_{0}^{t}\cos((t-\tau)z)y'(\tau)\,d\tau&=&y(\tau)\cos((t-\tau)z)\Bigg]_{\tau=0}^{\tau=t}-z\int_{0}^{t}\sin((t-\tau)z)y(\tau)\,d\tau\\
		\\
		&=&y(t)-\cos(tz)y(0)-z\int_{0}^{t}\sin((t-\tau)z)y(\tau)\,d\tau.
	\end{eqnarray*}
	For fixed $t$, the right side integral remains bounded as $z\to0$, that proves: 
	\[
	\lim_{z\to0}\int_{0}^{t}\cos((t-\tau)z)y'(\tau)\,d\tau=\lim_{z\to0}\left[y(t)-\cos(tz)y(0)\right]=y(t)-y(0).
	\]
	Substituting the above relation into \eqref{Kernel_1},  completes the proof. 
	
	For function $\omega^S(z,t)$, we write:
	\begin{eqnarray*}
		\frac{1}{z}\int_{0}^{t}\sin((t-\tau)z)y'(\tau)\,d\tau&=&\frac{1}{z}y(\tau)\sin((t-\tau)z)\Bigg]_{\tau=0}^{\tau=t}+\int_{0}^{t}\cos((t-\tau)z)y(\tau)\,d\tau\\
		\\
		&=&-\frac{\sin(tz)}{z}y(0)+\int_{0}^{t}\cos((t-\tau)z)y(\tau)\,d\tau.
	\end{eqnarray*}
	Now, we have:
	\begin{eqnarray*}
		\lim_{z\to0}\frac{1}{z}\int_{0}^{t}\sin((t-\tau)z)y'(\tau)\,d\tau&=&\lim_{z\to0}\left[-\frac{\sin(tz)}{z}y(0)+\int_{0}^{t}\cos((t-\tau)z)y(\tau)\,d\tau\right]\\
		&=&-ty(0)+\int_{0}^{t}y(\tau)\,d\tau.
	\end{eqnarray*}
	The above relation together with \eqref{Kernel_2}, concludes the proof.
	
	For  part (b), thanks to the fact that $y(t)$ and $y'(t)$ are continuous and  of exponential order and then using the Laplace transform, formally gives:
	\begin{eqnarray*}
		z^2\mathcal{L}\left\{\int_{0}^{t}\cos((t-\tau)z)y'(\tau)\,d\tau\right\}&=&z^2	\mathcal{L}\left\{\cos(tz)\right\}\mathcal{L}\left\{y'(t)\right\}
		=z^2\frac{s}{s^2+z^2}\left(s\mathcal{L}\left\{y(t)\right\}-y(0)\right).
	\end{eqnarray*}
	Thus,
	\[
	\displaystyle	z^2\int_{0}^{t}\cos((t-\tau)z)y'(\tau)\,d\tau=\mathcal{L}^{-1}\left\{z^2\frac{s}{s^2+z^2}\left(s\mathcal{L}\left\{y(t)\right\}-y(0)\right)\right\}.
	\]
	Now by taking the limit when $z\to+\infty$, we formally obtain:  
	\begin{eqnarray*}
		\displaystyle	\lim_{z\to+\infty}z^2\int_{0}^{t}\cos((t-\tau)z)y'(\tau)\,d\tau&=&\mathcal{L}^{-1}\left\{\lim_{z\to+\infty}\left[z^2\frac{s^2\mathcal{L}\left\{y(t)\right\}}{s^2+z^2}\right]\right\}\\
		&=&y''(t).
	\end{eqnarray*}
	Substituting the obtained result into \eqref{Kernel_1}, completes the proof. 
	
	Similarly, we can write:
	\begin{eqnarray*}
		\displaystyle	\lim_{z\to+\infty}z\int_{0}^{t}\sin((t-\tau)z)y'(\tau)\,d\tau&=&\mathcal{L}^{-1}\left\{\lim_{z\to+\infty}\left[z^2\frac{s\mathcal{L}\left\{y(t)\right\}-y(0)}{s^2+z^2}\right]\right\}=y'(t),
	\end{eqnarray*}
	Plugging the last relation into \eqref{Kernel_2}, the proof is concluded. 
\end{proof}
\end{Theorem}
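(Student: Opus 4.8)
The plan is to reduce all four claims to the behaviour of the inner $\tau$-integrals appearing in \eqref{Kernel_1} and \eqref{Kernel_2}, since for $0<\alpha<1$ the prefactors $\frac{2\sin(\pi\alpha/2)}{\pi}$ and $\frac{2\cos(\pi\alpha/2)}{\pi}$ are strictly positive constants and therefore irrelevant to a $\sim$ relation. For the $z\to0$ assertions in part (a) I would integrate by parts in $\tau$ to move the derivative off $y'$ and so expose the $z$-dependence explicitly. For the cosine kernel this produces $\int_0^t\cos((t-\tau)z)y'(\tau)\,d\tau = y(t)-y(0)\cos(tz)-z\int_0^t\sin((t-\tau)z)y(\tau)\,d\tau$; the last integral stays bounded as $z\to0$, so the whole expression converges to $y(t)-y(0)$, and $\omega^C(z,t)$ tends to the constant $\frac{2\sin(\pi\alpha/2)}{\pi}(y(t)-y(0))$. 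Consequently $z^{\alpha-1}\omega^C(z,t)/z^{\alpha-1}$ has a finite limit and $z^{\alpha-1}\omega^C(z,t)\sim z^{\alpha-1}$ follows. The hypothesis $|y'(t)|>C$ is exactly what secures the \emph{lower} constant in the definition of $\sim$: it forces $y$ to be strictly monotone, hence $y(t)\neq y(0)$ for $t>0$, so the limiting constant is nonzero. The sine case is handled identically after dividing by $z$ and using $\sin(tz)/z\to t$; the limit becomes $\frac{2\cos(\pi\alpha/2)}{\pi}\int_0^t(y(\tau)-y(0))\,d\tau$, again nonzero by monotonicity, yielding $z^{\alpha}\omega^S(z,t)\sim z^{\alpha}$.

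For the $z\to+\infty$ statements (b) and (c) the decay rate cannot be read off directly, so I would treat the inner integrals as convolutions and pass to Laplace transforms. Writing $Y(s)=\mathcal{L}\{y\}(s)$ and using $\mathcal{L}\{\cos(tz)\}=s/(s^2+z^2)$, $\mathcal{L}\{\sin(tz)\}=z/(s^2+z^2)$ together with $\mathcal{L}\{y'\}=sY(s)-y(0)$, the convolution theorem gives closed forms for the transforms of $\int_0^t\cos((t-\tau)z)y'(\tau)\,d\tau$ and $\int_0^t\sin((t-\tau)z)y'(\tau)\,d\tau$. Multiplying by $z^2$ in the cosine case and by $z$ in the sine case produces the factor $z^2/(s^2+z^2)\to1$; passing this limit through $\mathcal{L}^{-1}$ and invoking $y(0)=y'(0)=0$ for (b) (so that $\mathcal{L}\{y''\}=s^2Y(s)$) gives $z^2\int_0^t\cos((t-\tau)z)y'(\tau)\,d\tau\to y''(t)$ and $z\int_0^t\sin((t-\tau)z)y'(\tau)\,d\tau\to y'(t)$. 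Hence $\omega^C(z,t)\sim z^{-2}$, and because of the extra $1/z$ in \eqref{Kernel_2} also $\omega^S(z,t)\sim z^{-2}$, from which $z^{\alpha-1}\omega^C\sim z^{\alpha-3}$ and $z^{\alpha}\omega^S\sim z^{\alpha-2}$ follow. The exponential-order hypotheses enter here to guarantee that the transforms converge on a common right half-plane.

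The hard part will be justifying the interchange $\lim_{z\to\infty}\mathcal{L}^{-1}=\mathcal{L}^{-1}\lim_{z\to\infty}$, which in the Laplace argument is only formal. The clean rigorous route is to bypass inversion and integrate by parts directly in the oscillatory integral: one integration by parts on $\int_0^t\cos((t-\tau)z)y'(\tau)\,d\tau$ kills the boundary terms via $y'(0)=0$ and leaves $\frac1z\int_0^t\sin((t-\tau)z)y''(\tau)\,d\tau$, whose leading contribution is the boundary value $y''(t)/z^2$ with a remainder that vanishes faster by the Riemann–Lebesgue lemma, continuity of $y''$ being enough; the sine integral is one order larger and contributes $y'(t)/z$ as leading term. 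This makes the decay exponents $z^{\alpha-3}$ and $z^{\alpha-2}$ rigorous. The only genuinely delicate point that remains is the lower bound in the $\sim$ relation at $+\infty$, which needs the leading coefficient ($y''(t)$ for CDR, $y'(t)$ for SDR) to be nonzero at the fixed $t$; I would flag this as an implicit nondegeneracy assumption behind the stated hypotheses.
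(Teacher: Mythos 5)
Your proposal follows the paper's own argument essentially step for step: part (a) by the identical integration by parts, yielding the limits $y(t)-y(0)$ for the cosine kernel and $\int_0^t y(\tau)\,d\tau-t\,y(0)$ for the sine kernel, and parts (b)--(c) by the identical Laplace/convolution computation with the factor $z^2/(s^2+z^2)\to1$, which the paper also presents only formally. Your extra observation in (a) --- that $|y'(t)|>C$ forces sign-constancy of $y'$, hence $y(t)\neq y(0)$ and a nonzero limiting constant, which is exactly what the lower constant in the paper's definition of $\sim$ requires --- goes beyond what the paper writes and is correct.

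However, the ``clean rigorous route'' you sketch to replace the formal interchange of $\lim_{z\to\infty}$ and $\mathcal{L}^{-1}$ does not work as stated under $y\in C^2$. After one integration by parts you hold $\frac1z\int_0^t\sin((t-\tau)z)\,y''(\tau)\,d\tau$; extracting a boundary value $y''(t)/z^2$ by a second integration by parts needs $y'''$ (or $y''$ of bounded variation), and the alternative splitting $y''(\tau)=y''(t)+\bigl(y''(\tau)-y''(t)\bigr)$ gives
\begin{equation*}
\frac{y''(t)\left(1-\cos(tz)\right)}{z^{2}}+\frac1z\int_0^t\sin((t-\tau)z)\left(y''(\tau)-y''(t)\right)\,d\tau,
\end{equation*}
where Riemann--Lebesgue makes the second term only $o(z^{-1})$, not $o(z^{-2})$, so with mere continuity of $y''$ it need not be dominated by the putative leading term. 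Worse, the explicit example $y(t)=t^{2}/2$ (which satisfies all hypotheses of (b)) gives $z^{2}\int_0^t\cos((t-\tau)z)\,y'(\tau)\,d\tau=1-\cos(tz)$, which oscillates and vanishes along $z=2k\pi/t$: the pointwise limit $y''(t)$ asserted in the Laplace step simply does not exist, and the lower bound implicit in $z^{\alpha-1}\omega^{C}(z,t)\sim z^{\alpha-3}$ fails along that subsequence; the same happens for (c) with $y(t)=t$. So your closing flag about nondegeneracy of $y''(t)$ and $y'(t)$ is apt but understates the issue: what is genuinely salvageable at $+\infty$ is the upper bound $\omega^{C}(z,t)=\mathcal{O}(z^{-2})$ (under, say, $y''$ Lipschitz or of bounded variation), and this caveat applies equally to the paper's own formal proof, which your main argument reproduces.
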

\begin{Remark}\label{RemWeight}
	Let $0<\alpha<1$. Due to  Theorem \ref{AsymtoticCDRandSDR}, we have the following properties:
	\begin{itemize}
		\item The asymptotic behaviours of $z^{\alpha-1}\omega^C(z,t)$ and $z^{\alpha}\omega^S(z,t)$ when $z\to0$, indicate that the use of generalized Gauss-Laguerre with the weight functions $w(z)=z^{\alpha-1}e^{-z}$ and $w(z)=z^{\alpha}e^{-z}$, respectively, may lead to the smooth integrands at origin.
		\item  As we see, $z^{\alpha-1}\omega^C(z,t)$ and $z^{\alpha}\omega^S(z,t)$ when $z\to+\infty$ decay as $z^{\alpha-3}$ and $z^{\alpha-2}$, respectively. On the other hand, the exponent of $z$ for each case is always contained in $(-3,-2)$ and $(-2,-1)$, respectively. This fact is sufficient to make sure that the semi-infinite integrals \eqref{IntCos_1} and    \eqref{IntSin_1} exist.
	\end{itemize}
	
\end{Remark}
Now, in what follows, error analysis of the generalized Gauss-Laguerre formula is given.
\begin{Theorem}\label{ErrorQuad}
	Let  $y(t)$ and $y'(t)$ be of exponential order.
	\begin{itemize}
		\item(a): For $0<\alpha<1$ and $y\in C^2[0,T]$ such that $y(0)=y'(0)=0$, we have:
		\begin{equation}\label{CDEQuadEr}
			R_{C,N,\alpha-1}^{GGL}\left[ e^{(.)}\omega^C(.,t)\right]=\mathcal{O}(N^{\alpha-2}),
		\end{equation}
		for $t\in[0,T]$.
		\item(b): For $0<\alpha<1$ and $y\in C^1[0,T]$,  we also have:
		\begin{equation}\label{SDEQuadEr}
			R_{S,N,\alpha}^{GGL}\left[ e^{(.)}\omega^S(.,t)\right]=\mathcal{O}(N^{\alpha-1}),
		\end{equation}
		for $t\in[0,T]$.
	\end{itemize}
\end{Theorem}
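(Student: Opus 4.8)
The plan is to reduce both estimates to a single mechanism governed entirely by the decay of the integrands at infinity, exploiting the two regimes isolated in Theorem \ref{AsymtoticCDRandSDR}. Near the origin the choice of Laguerre weight is tailored precisely to the singular factor: writing the CDR integrand as $z^{\alpha-1}e^{-z}\,[e^{z}\omega^C(z,t)]$, part (a) of Theorem \ref{AsymtoticCDRandSDR} shows that $\omega^C(\cdot,t)$ (in fact an entire function of $z$, being $\int_0^t\cos((t-\tau)z)y'(\tau)\,d\tau$ up to a constant) is bounded near $z=0$, so the regularised integrand $g^C(z):=e^{z}\omega^C(z,t)$ is smooth there and the endpoint contributes nothing to the rate; the same holds for $g^S(z):=e^{z}\omega^S(z,t)$ with weight $z^{\alpha}e^{-z}$. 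Thus only the algebraic tails $z^{\alpha-1}\omega^C\sim z^{\alpha-3}$ and $z^{\alpha}\omega^S\sim z^{\alpha-2}$ from parts (b)--(c) can drive the error.

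First I would record uniform-in-$t$ bounds of the form $|\omega^C(z,t)|\le C\,(1+z)^{-2}$ and $|\omega^S(z,t)|\le C\,(1+z)^{-2}$ for $t\in[0,T]$. These follow from the Laplace-transform representation used in the proof of Theorem \ref{AsymtoticCDRandSDR} together with the hypotheses $y\in C^2[0,T]$, $y(0)=y'(0)=0$ (for CDR) and $y\in C^1[0,T]$ (for SDR) and the exponential-order assumption, which make the constant $C$ independent of $t$.

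The core estimate splits the integral at the largest quadrature node. For the $N$-point generalised Gauss--Laguerre rule with parameter $\beta$ the nodes lie in $(0,z_N)$ with $z_N\sim 4N$, so I would write $\int_0^{\infty}=\int_0^{z_N}+\int_{z_N}^{\infty}$. Using the tail bound above, the discarded part of the exact integral is $\int_{z_N}^{\infty}z^{\alpha-1}|\omega^C|\,dz\le C\int_{z_N}^{\infty}z^{\alpha-3}\,dz=\mathcal{O}(z_N^{\alpha-2})=\mathcal{O}(N^{\alpha-2})$ in the CDR case, and $\mathcal{O}(z_N^{\alpha-1})=\mathcal{O}(N^{\alpha-1})$ in the SDR case; these are exactly the claimed rates. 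It then remains to show that the quadrature error committed on $[0,z_N]$ is of the same or smaller order. Here I would use the exactness of the Gauss rule on $\mathcal{P}_{2N-1}$ and the positivity of the Laguerre weights $w_i^{(\beta)}$ to bound this contribution by a multiple of the weighted best polynomial approximation of $g^C$ (resp. $g^S$) against the Laguerre weight, and then estimate that approximation number using the smoothness and decay recorded above.

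The hard part will be this last step, because the regularised integrand $g^{C}(z)=e^{z}\omega^{C}(z,t)$ grows exponentially on the real axis, so the classical derivative-based (Peano kernel) remainder $R_N\propto g^{(2N)}(\xi)$ is unusable. The genuine obstacle is therefore to control the exponential amplification $e^{z_i}$ hidden in the effective weights $\lambda_i=w_i^{(\beta)}e^{z_i}$ for the nodes $z_i$ close to $z_N\sim 4N$, and to match it against the merely algebraic decay of $z^{\beta}\omega$; this is what forces a weighted-approximation argument rather than a smoothness argument, and it is where the precise exponent ($\alpha-2$ versus $\alpha-1$) is finally pinned down by the decay rates $z^{\alpha-3}$ and $z^{\alpha-2}$ supplied by Theorem \ref{AsymtoticCDRandSDR}.
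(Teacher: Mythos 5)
The paper offers no written proof of Theorem \ref{ErrorQuad} at all: it delegates everything to \cite{Diethelm2008}, where the corresponding bound for the Yuan--Agrawal integrand is obtained by invoking an established convergence theorem for (generalized) Gauss--Laguerre rules applied to integrands that behave algebraically at the origin (absorbed into the weight $z^{s}e^{-z}$) and decay algebraically at infinity, giving an error of order $N^{s+1-\rho}$ when the integrand decays like $z^{s-\rho}$; with $s=\alpha-1,\rho=2$ and $s=\alpha,\rho=2$ this is exactly \eqref{CDEQuadEr} and \eqref{SDEQuadEr}. Your plan correctly reconstructs the mechanism behind that cited result --- the weight neutralizes the endpoint, the $z^{\alpha-3}$ and $z^{\alpha-2}$ tails from Theorem \ref{AsymtoticCDRandSDR} dictate the rates, and the truncation level $z_N\sim 4N$ reproduces the exponents $N^{\alpha-2}$ and $N^{\alpha-1}$ on the discarded tail. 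But the tail computation is only the easy half (and really a sharpness heuristic): the actual content of the theorem is that the Gauss sum tracks the remaining integral to the same order, and that is precisely the step you defer with ``bound this contribution by a multiple of the weighted best polynomial approximation \dots and then estimate that approximation number.'' You never produce that estimate, and for functions whose weighted form decays only algebraically it is genuinely nontrivial --- it is the content of the Uspensky/Mastroianni--Monegato-type quadrature theorems on which Diethelm's proof rests. In other words, your sketch stops exactly at the statement of the lemma that does all the work, so as it stands it is a correct plan, not a proof.

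Two secondary points would also need repair if you carried the plan out. First, your decomposition misapplies exactness: the Gauss rule is exact for $\int_{0}^{\infty}p(z)\,z^{\beta}e^{-z}\,dz$ with $p\in\mathcal{P}_{2N-1}$, not for $\int_{0}^{z_N}$, so after splitting at $z_N$ you cannot directly compare the quadrature sum with the bulk integral; the standard argument subtracts the full integral of $p$ and must separately control $\int_{z_N}^{\infty}p(z)\,z^{\beta}e^{-z}\,dz$ (exponentially small for degree $\le 2N-1$ because the Laguerre mass concentrates below $\approx 4N$, but this has to be said). Second, the uniform-in-$t$ bound $|\omega^{C}(z,t)|\le C(1+z)^{-2}$ is stronger than what the stated hypotheses deliver: for $y\in C^{2}[0,T]$ with $y'(0)=0$, one integration by parts gives only the uniform bound $\mathcal{O}(z^{-1})$, and the $\mathcal{O}(z^{-2})$ decay requires a second integration by parts (hence $y''$ of bounded variation or better) or the Laplace-transform computation, which the paper itself carries out only formally and pointwise in $t$; asserting a $t$-independent constant here inherits the paper's optimism rather than proving it. Neither of these is fatal, but together with the missing weighted-approximation estimate they leave the claimed rates unestablished by your argument alone.
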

\begin{proof}
	For the proof of this theorem see \cite{Diethelm2008}.
\end{proof}
\begin{Remark}
	Theorem \ref{ErrorQuad} states that for $0<\alpha<1$ the error of the generalized Gauss-Laguerre  quadrature rule of the CDR method when $N\to+\infty$ decays faster than the  SDR. 
\end{Remark}
\subsubsection{The contribution of the ODE solver}
The second part of the error analysis is about the truncation error of the ODE solver. To do this, we first note that, if $z_k^{(\gamma)},\ k=1,2,\cdots,N$ stands for the nodes of the generalized Laguerre integration formula with respect to the weight function $w(x)=x^{\gamma} e^{-x}$,  then we have $z_k^{(\gamma)}=4k+2\gamma+6$ \cite{Shen2011}.
To explain more clearly, we consider the following system of first order differential equations:
\begin{eqnarray}\label{SysGen}
	Y'(t)=F(t,Y(t)),\ Y(0)={\bf a},\ \ t\in[0,T],
\end{eqnarray}
for which
\begin{equation}
	Y(t)=\left[\begin{array}{c}
		y_1(t)\\
		y_2(t)\\
		\vdots\\
		y_m(t)
	\end{array}\right],\ \ 
	F(t,Y(t))=\left[\begin{array}{c}
		f_1(t,y_1,\cdots,y_m)\\
		f_2(t,y_1,\cdots,y_m)\\
		\vdots\\
		f_m(t,y_1,\cdots,y_m)
	\end{array}\right],\ \ 
	{\bf a}=\left[\begin{array}{c}
		y_1(0)\\
		y_2(0)\\
		\vdots\\
		y_m(0)
	\end{array}\right],
\end{equation}
where $F: [0,T]\times \Bbb{R}^m\longrightarrow \Bbb{R}^m$ is continuous in its first variable and satisfies a Lipschitz condition with constant $L$ in its second variable, i.e.,  for any $t\in[0,T]$ and $Z,W\in \Bbb{R}^m$, we have
\[
\|F(t, W)-F(t,Z)\|\leq L\|W-Z\|.
\]
The convergence of a numerical method applied to Eq. \eqref{SysGen} requires the step size $h$ to satisfy in $Lh<1$. In  our case, viz.  Eqs. \eqref{ODE2} and \eqref{ODE3}, we have $L=\left(z_k^{(\gamma)}\right)^2,\  k=1,2,\cdots, N$.  This may lead to some difficulties from the numerical point of view for sufficiently large $N$ unless the step sizes $h$ are chosen extremely small. For this reason, we always assume that
\[
h\left(z_N^{(\gamma)}\right)^2<1\Longrightarrow h<\frac{1}{\left(z_N^{(\gamma)}\right)^2}\sim N^{-2}.
\] 
So, we have the following lemma.
\begin{Lemma}\label{Conv_ODE}
	Assume that a A-stable one-step implicit method of order $p$ is used for  Eqs. \eqref{ODE2} and \eqref{ODE3}, then there exists a constant $C>0$ such that:
	\begin{eqnarray}
		&&\left|E^C_h(z_k^{(\alpha-1)},t)\right|\leq Ch^p e^{3T\displaystyle \left(z_k^{(\alpha-1)}\right)^2 },\ \\
		&& \left|E^S_h(z_k^{(\alpha)},t)\right|\leq Ch^p e^{3T\displaystyle \left(z_k^{(\alpha)}\right)^2 }, 
	\end{eqnarray}
	for $k=1,2,\cdots,N$, sufficiently small $h>0$ and any $t\in[0,T]$. 
	\begin{proof}
		For the proof of this lemma see \cite{Diethelm2008}.
	\end{proof}
\end{Lemma}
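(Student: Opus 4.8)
The plan is to treat this as the classical global-error estimate for a one-step method applied to the linear systems \eqref{SysODE2} and \eqref{SysODE3}, the only nonstandard feature being that the Lipschitz constant of the right-hand side scales like the square of the quadrature node $z_k^{(\gamma)}$, as already recorded in the paragraph preceding the lemma. Writing $Y=(x_1,x_2)^{\top}$ and $F(t,Y)=\bigl(x_2,\,-z^{2}x_1+g(t)\bigr)^{\top}$ for the appropriate forcing $g$, an A-stable one-step method of order $p$ generates a numerical solution $Y_h$ whose error $e_n:=Y(t_n)-Y_h(t_n)$ satisfies the one-step recursion
\begin{equation*}
	\|e_{n+1}\|\le (1+hL_\Phi)\,\|e_n\|+\|\tau_{n+1}\|,
\end{equation*}
where $\tau_{n+1}$ is the local truncation error and $L_\Phi$ is the Lipschitz constant of the (implicitly defined) increment function. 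First I would establish this recursion in the usual way, subtracting the exact one-step map from the numerical one and invoking consistency of order $p$; since the two components $\omega^C_h,\omega^S_h$ are just entries of $Y_h$, a bound on $\|e_n\|$ will immediately bound $|E^C_h|$ and $|E^S_h|$.

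The second step is to pin down the two constants. The Jacobian $\partial F/\partial Y=\begin{pmatrix}0&1\\-z^{2}&0\end{pmatrix}$ has largest singular value $z^{2}$ for $z\ge 1$, so $F$ is Lipschitz with $L=z^{2}=\bigl(z_k^{(\gamma)}\bigr)^{2}$. For the implicit scheme the increment function is a well-defined contraction precisely under the standing assumption $hL<1$, and a routine implicit-function estimate then gives $L_\Phi\le 2L$ once, say, $hL\le\tfrac12$. Taking exact initialization ($e_0=0$) and applying the discrete Gronwall inequality yields
\begin{equation*}
	\|e_n\|\le \frac{\max_j\|\tau_j\|}{h\,L_\Phi}\bigl(e^{L_\Phi t_n}-1\bigr)\le \frac{\max_j\|\tau_j\|}{h\,L_\Phi}\,e^{2z^{2}T}.
\end{equation*}

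It remains to control the local truncation error. For a method of order $p$ one has $\|\tau_j\|\le C\,h^{p+1}\sup_{t\in[0,T]}\|Y^{(p+1)}(t)\|$, and here the growth of the high time-derivatives follows straight from the differential equation: since $\omega^C$ (resp.\ $\omega^S$) is bounded in $z$, as in the computations of Theorem \ref{AsymtoticCDRandSDR}, relation \eqref{ODE2} (resp.\ \eqref{ODE3}) shows that each further pair of $t$-derivatives multiplies by $z^{2}$, whence $\partial_t^{m}\omega^C=\mathcal{O}(z^{m})$ and $\|Y^{(p+1)}\|\le C z^{p+2}$. Collecting the factors gives $\|e_n\|\le C\,h^{p}\,z^{p}\,e^{2z^{2}T}$, and since $z^{p}\le C'e^{z^{2}T}$ for all $z>0$, the polynomial prefactor is absorbed into the exponential to produce the claimed $C h^{p}e^{3T(z_k^{(\gamma)})^{2}}$; taking $\gamma=\alpha-1$ and $\gamma=\alpha$ yields the two stated bounds for $E^C_h$ and $E^S_h$.

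The main obstacle is making the exponential constant exactly $3$ rather than merely ``some $c>0$''. This forces care on two fronts at once: the implicit increment function contributes a factor $2$ to the exponent through $L_\Phi\le 2L$, and the polynomial-in-$z$ growth of the high derivatives of the exact solution contributes the remaining $1$ after absorption into $e^{z^{2}T}$. One must also ensure that $C$ is genuinely independent of $k$ (hence of $z_k^{(\gamma)}$), which is why all $z$-dependence is pushed into the exponential; the A-stability hypothesis is exactly what prevents the scheme from introducing spurious growth beyond the $e^{L_\Phi t_n}$ factor when $hL$ is only required to lie below a fixed threshold.
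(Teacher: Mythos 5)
Your argument is correct and is essentially the proof the paper points to: the paper's own ``proof'' is just the citation of \cite{Diethelm2008}, and Diethelm's argument for the corresponding lemma there is exactly this classical Gronwall-type global error estimate with Lipschitz constant $L=z^{2}$, a local truncation error controlled through derivatives of the exact solution that grow polynomially in $z$, and absorption of the polynomial prefactor into the exponential to yield the constant $3$ in $e^{3Tz^{2}}$ --- your contribution beyond the citation is the (needed) adaptation to the second-order systems \eqref{SysODE2}--\eqref{SysODE3}, which you carry out correctly via the Jacobian $\begin{pmatrix}0&1\\-z^{2}&0\end{pmatrix}$ and the inductive bound $\partial_t^{m}\omega\sim z^{m}$ from the differential equation. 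Three small points to tighten: use the monotonicity of $x\mapsto\left(e^{xT}-1\right)/x$ so that $L_\Phi$ can be replaced by its upper bound $2L$ simultaneously in the exponent and in the denominator (as written you mix an upper bound in the exponent with $L_\Phi$ itself downstairs); take $L=\max\left(1,z^{2}\right)$ to cover the Gauss--Laguerre nodes with $z_k^{(\gamma)}<1$, absorbing the resulting $e^{2T}$ into $C$; and note that your truncation-error bound $\left\|Y^{(p+1)}\right\|\leq Cz^{p+2}$ implicitly requires $y\in C^{p+2}[0,T]$, a smoothness hypothesis that the lemma (and \cite{Diethelm2008}) likewise leaves tacit.
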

\begin{Theorem}\label{BoundODESolver}
	Under the assumptions of the previous Lemma, there exist constants $C_1 > 0,\ C_2>0$
	such that:
	\begin{eqnarray}
		&& \Big|Q_{C,N,\alpha-1}^{GGL}\left[ e^{(.)}E^C_h(.,t)\right]\Big|\leq C_1h^{p}\int_{0}^{4N}e^{3T\displaystyle z^2 }\,dz,\\
		&&\Big|Q_{S,N,\alpha}^{GGL}\left[ e^{(.)}E^S_h(.,t)\right]\Big|\leq C_2h^{p}\int_{0}^{4N}e^{3T\displaystyle z^2 }\,dz.
	\end{eqnarray}
\end{Theorem}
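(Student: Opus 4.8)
The plan is to treat both bounds simultaneously, since the CDR and SDR cases are structurally identical and differ only in the weight exponent $\gamma$ (namely $\gamma=\alpha-1$ for CDR and $\gamma=\alpha$ for SDR). I would start from the definition \eqref{ErrODECDR} of $Q_{C,N,\alpha-1}^{GGL}$, apply the triangle inequality, and use the positivity of the generalized Gauss-Laguerre weights $w_i^{(\alpha-1)}>0$ to write
\[
\Big|Q_{C,N,\alpha-1}^{GGL}\left[e^{(.)}E^C_h(.,t)\right]\Big|\le\sum_{i=1}^{N}w_i^{(\alpha-1)}\,e^{z_i^{(\alpha-1)}}\,\big|E^C_h(z_i^{(\alpha-1)},t)\big|.
\]
Then I would insert the pointwise ODE-solver estimate from Lemma \ref{Conv_ODE}, namely $|E^C_h(z_i^{(\alpha-1)},t)|\le Ch^p e^{3T(z_i^{(\alpha-1)})^2}$, and factor out $Ch^p$, so that the whole problem reduces to dominating the weighted sum $\sum_{i=1}^N w_i^{(\alpha-1)}e^{z_i^{(\alpha-1)}}e^{3T(z_i^{(\alpha-1)})^2}$.

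The heart of the argument is to majorize this sum by $\int_0^{4N}e^{3Tz^2}\,dz$. I would read the sum as a Riemann-type sum: the node estimate $z_k^{(\gamma)}=4k+2\gamma+6$ quoted before the lemma gives $z_N^{(\gamma)}\le 4N+2\gamma+6$, so all nodes sit inside $[0,4N]$ up to an additive constant that the final constant will absorb. Since $g(z):=e^{3Tz^2}$ is positive and strictly increasing, associating the $i$-th node with the interval $[z_i^{(\gamma)},z_{i+1}^{(\gamma)}]$ to its right yields $g(z_i^{(\gamma)})\,(z_{i+1}^{(\gamma)}-z_i^{(\gamma)})\le\int_{z_i^{(\gamma)}}^{z_{i+1}^{(\gamma)}}g(z)\,dz$, and summing telescopes to $\int_0^{z_N^{(\gamma)}}g(z)\,dz\le\int_0^{4N}e^{3Tz^2}\,dz$. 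This delivers part (a); part (b) is then verbatim with $\alpha-1$ replaced by $\alpha$ and the companion bound $|E^S_h(z_i^{(\alpha)},t)|\le Ch^p e^{3T(z_i^{(\alpha)})^2}$ of Lemma \ref{Conv_ODE}, producing $C_2$ and the identical integral.

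The main obstacle is precisely this discrete-to-continuous comparison, which hinges on how the effective weights $w_i^{(\gamma)}e^{z_i^{(\gamma)}}$ relate to the node spacing. The natural asymptotic $w_i^{(\gamma)}\approx (z_{i+1}^{(\gamma)}-z_i^{(\gamma)})\,(z_i^{(\gamma)})^{\gamma}e^{-z_i^{(\gamma)}}$ gives $w_i^{(\gamma)}e^{z_i^{(\gamma)}}\approx (z_{i+1}^{(\gamma)}-z_i^{(\gamma)})\,(z_i^{(\gamma)})^{\gamma}$, so the polynomial factor $(z_i^{(\gamma)})^{\gamma}$ must be disposed of before the clean integrand $e^{3Tz^2}$ can appear. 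For the CDR this is harmless, since $\gamma=\alpha-1\in(-1,0)$ makes $z^{\gamma}$ integrable at the origin and at most $1$ away from it, so the extra factor is swallowed into $C_1$; the delicate point is the SDR, where $\gamma=\alpha>0$ and disposing of $(z_i^{(\gamma)})^{\gamma}$ genuinely requires the sharp node and weight asymptotics from \cite{Shen2011,Diethelm2008}. I expect that controlling this polynomial factor, rather than the elementary monotonicity comparison, is where the real technical work lies.
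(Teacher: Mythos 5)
Your skeleton---triangle inequality on \eqref{ErrODECDR} and \eqref{ErrODESDR}, positivity of the generalized Gauss--Laguerre weights, insertion of the pointwise bounds of Lemma \ref{Conv_ODE}, and then a monotone sum-to-integral comparison based on $w_i^{(\gamma)}e^{z_i^{(\gamma)}}\approx$ local node spacing---is exactly the route the paper intends: its proof of Theorem \ref{BoundODESolver} is a one-line reference to Theorem 5 of \cite{Diethelm2008}, whose argument has precisely this structure. So the architecture is right; the problem is a specific step at the top end of the integration range.

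The step that fails is the claim that the nodes lie in $[0,4N]$ ``up to an additive constant that the final constant will absorb.'' For the integrand $g(z)=e^{3Tz^2}$ an additive shift of the upper limit is \emph{not} absorbable by a multiplicative constant: since $\int_0^{X}e^{3Tz^2}\,dz\sim e^{3TX^2}/(6TX)$ as $X\to\infty$, one has $\int_0^{4N+c}g\,\big/\int_0^{4N}g\sim e^{3T(8cN+c^2)}\to\infty$, so no $N$-independent $C_1,C_2$ exist along that road. Moreover, your own quoted node estimate gives $z_N^{(\gamma)}\le 4N+2\gamma+6>4N$, so the concluding inequality $\int_0^{z_N^{(\gamma)}}g\le\int_0^{4N}g$ is false as written; and the rightward-interval telescoping covers only the first $N-1$ nodes, silently dropping the term $w_N^{(\gamma)}e^{z_N^{(\gamma)}}g(z_N^{(\gamma)})$, which is the \emph{dominant} term of the entire sum. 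What rescues the theorem is the sharper soft-edge estimate for the largest generalized Laguerre zero, $z_N^{(\gamma)}\le 4N+2\gamma+2-c\,N^{1/3}$ with $c>0$, which places the last node below $4N$ by a margin of order $N^{1/3}$; then $e^{3T(z_N^{(\gamma)})^2}\le e^{48TN^2}e^{-24Tc\,N^{4/3}(1+o(1))}$, and both the last-node term and the polynomial factor $(z_i^{(\gamma)})^{\gamma}\le(4N)^{\gamma}$ are swallowed with room to spare. This also inverts your diagnosis of the difficulty: the factor $z^{\gamma}$ you flag as the SDR-specific obstacle is harmless in both cases once the edge margin is secured (and without that margin even the CDR case with $\gamma=\alpha-1<0$ fails), so the genuinely delicate point is exactly the top-end comparison you passed over.
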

\begin{proof}
	The proof of this theorem is fairly similar to the proof of Theorem 5 of \cite{Diethelm2008}.
\end{proof}
\subsubsection{ The overall error analysis  }
For the reader's convenience, summary of the error analysis  is given in the following theorem.
\begin{Theorem}\label{OverErrCDRSDR}
	Let $0<\alpha<1$. If a A-stable one-step implicit method of order $p$ with the step size $h<N^{-2}$, (where $N$ is the number of integration points in the generalized Gauss-Laguerre formula) is used for  Eqs. \eqref{ODE2} and \eqref{ODE3}, then the overall error analysis of CDR and SDR approximation formulae satisfies:
	\begin{itemize}
		\item If $y(t)\in C^1[0,T]$, then for $t\in[0,T]$, we have
		\begin{equation}\label{OVERERRCDR}
			\Big|R^{\alpha}_{C,N,h}y(t)\Big|=\mathcal{O}(N^{\alpha-2})+\mathcal{O}(h^p)\int_{0}^{4N}e^{3T\displaystyle z^2 }\,dz.
		\end{equation}
		\item If $y(t)\in C^1[0,T]$, then for $t\in[0,T]$, we have
		\begin{equation}\label{OVERERRSDR}
			\Big|R^{\alpha}_{S,N,h}y(t)\Big|=\mathcal{O}(N^{\alpha-1})+\mathcal{O}(h^p)\int_{0}^{4N}e^{3T\displaystyle z^2 }\,dz.
		\end{equation}
	\end{itemize}
\end{Theorem}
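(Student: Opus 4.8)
The plan is to read both estimates off directly from the exact residual splittings already recorded in \eqref{Resid_CDR} and \eqref{Resid_SDR}. Each of those identities writes the total error $R^{\alpha}_{C,N,h}y(t)$ (respectively $R^{\alpha}_{S,N,h}y(t)$) as the sum of two pieces: the pure quadrature error $R^{GGL}$ of the generalized Gauss-Laguerre rule, and the propagated ODE-solver error $Q^{GGL}$. Hence the first step is simply the triangle inequality,
\[
\Big|R^{\alpha}_{C,N,h}y(t)\Big| \le \Big|R_{C,N,\alpha-1}^{GGL}\big[e^{(.)}\omega^C(.,t)\big]\Big| + \Big|Q_{C,N,\alpha-1}^{GGL}\big[e^{(.)}E^C_h(.,t)\big]\Big|,
\]
and the analogous bound for the sine case obtained from \eqref{Resid_SDR}.

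Second, I would estimate each summand by the result tailored to it. For the quadrature term, Theorem \ref{ErrorQuad} yields $\mathcal{O}(N^{\alpha-2})$ for CDR and $\mathcal{O}(N^{\alpha-1})$ for SDR; at this stage one must supply the regularity hypotheses that theorem demands, namely the $C^2$ assumption together with $y(0)=y'(0)=0$ for the cosine estimate (this is exactly what produces the $z^{\alpha-3}$ decay at infinity via Theorem \ref{AsymtoticCDRandSDR}(b)) and only $y\in C^1[0,T]$ for the sine estimate. For the ODE term, Theorem \ref{BoundODESolver} gives precisely $\mathcal{O}(h^p)\int_{0}^{4N} e^{3Tz^2}\,dz$ in both cases; here the standing assumption $h<N^{-2}$ is what validates the underlying convergence estimate of Lemma \ref{Conv_ODE}, since it forces $h\left(z_N^{(\gamma)}\right)^2<1$ and thus keeps the one-step implicit method inside its stability regime.

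Adding the two contributions gives \eqref{OVERERRCDR} and \eqref{OVERERRSDR}, completing the argument. Because every quantitative step is quoted from an earlier theorem, there is no genuine technical obstacle: the statement is essentially a bookkeeping synthesis of Theorem \ref{ErrorQuad} and Theorem \ref{BoundODESolver}. The only point worth flagging is conceptual rather than computational, namely that the two error sources compete in $N$. Enlarging $N$ damps the quadrature term $N^{\alpha-2}$ (or $N^{\alpha-1}$) while simultaneously inflating the ODE integral $\int_{0}^{4N} e^{3Tz^2}\,dz$, which grows faster than any power of $N$. Consequently any genuine convergence conclusion rests on choosing $h$ to decay rapidly enough in $N$ to overcome this growth, and presenting the bound in the two-term form above is precisely what keeps that trade-off explicit.
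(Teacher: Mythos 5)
Your proposal is correct and takes essentially the same route as the paper, whose entire proof reads that the result is ``immediately obtained from Theorems \ref{ErrorQuad} and \ref{BoundODESolver}'' --- i.e.\ exactly your splitting via \eqref{Resid_CDR} and \eqref{Resid_SDR} followed by the triangle inequality and the two quoted bounds. Your observation that the CDR estimate really inherits the stronger hypotheses of Theorem \ref{ErrorQuad}(a) (namely $y\in C^2[0,T]$ with $y(0)=y'(0)=0$, not merely $y\in C^1[0,T]$ as the theorem states) is a fair point that the paper itself glosses over.
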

\begin{proof}
	The proofs are immediately obtained from Theorems \ref{ErrorQuad} and \ref{BoundODESolver}.
\end{proof}
To have a good sense and in order to compare the CDR and SDR methods and the Yuan and Agrawal (YA) one, the error analysis of their method is provided here \cite{Diethelm2008}.
\begin{Theorem}\label{OverErrYA}
	Let $0<\alpha<1$ and $y(t)\in C^1[0,T]$. If a A-stable one-step implicit method of order $p$ with the step size $h<N^{-2}$ is used for  Eq. \eqref{ODE1}, then the overall error analysis of YA approximation formula satisfies:
	\begin{equation}\label{OVERERRYA}
		\Big|R^{\alpha}_{N,h}y(t)\Big|=\mathcal{O}(N^{2\alpha-2})+\mathcal{O}(h^p)\int_{0}^{4N}e^{3T\displaystyle z^2 }\,dz,
	\end{equation}
	for $t\in[0,T]$.
\end{Theorem}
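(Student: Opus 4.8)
The plan is to reproduce, for the Yuan--Agrawal representation, the same two--stage decomposition that was used for CDR and SDR in \eqref{Resid_CDR}. Applying the generalized Gauss--Laguerre rule for the weight $w(z)=z^{2\alpha-1}e^{-z}$ (whose nodes satisfy $z_k^{(2\alpha-1)}\sim 4k+4\alpha+4$, so that the largest node is of order $4N$), I would first split
\[
R^{\alpha}_{N,h}y(t)=R_{N,2\alpha-1}^{GGL}\!\left[e^{(\cdot)}\omega^{YA}(\cdot,t)\right]+Q_{N,2\alpha-1}^{GGL}\!\left[e^{(\cdot)}E^{YA}_h(\cdot,t)\right],
\]
where the first term is the pure quadrature error evaluated on the exact states $\omega^{YA}$ and the second collects the contributions $\omega^{YA}(z_k,t)-\omega^{YA}_h(z_k,t)$ of the ODE solver applied to \eqref{ODE1}. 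The two terms would then be estimated separately and added, so that the claimed bound \eqref{OVERERRYA} is the sum of a quadrature part and an ODE part.

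For the quadrature term I would first determine the asymptotics of the integrand $z^{2\alpha-1}\omega^{YA}(z,t)$, in the spirit of Theorem \ref{AsymtoticCDRandSDR}. As $z\to0$ the kernel $e^{-(t-\tau)z^2}\to1$, so $\omega^{YA}(z,t)\to\frac{2\sin(\pi\alpha)}{\pi}\bigl(y(t)-y(0)\bigr)$ stays bounded and $z^{2\alpha-1}\omega^{YA}(z,t)\sim z^{2\alpha-1}$; since $2\alpha-1>-1$ this guarantees integrability at the origin and confirms that $z^{2\alpha-1}e^{-z}$ is the natural weight. As $z\to+\infty$ the family $z^2e^{-(t-\tau)z^2}$ is an approximate identity concentrating at $\tau=t$, so $z^2\int_0^t e^{-(t-\tau)z^2}y'(\tau)\,d\tau\to y'(t)$ for $y\in C^1[0,T]$ (equivalently, via the Laplace-transform computation already used in Theorem \ref{AsymtoticCDRandSDR}), giving $\omega^{YA}(z,t)\sim z^{-2}$ and hence $z^{2\alpha-1}\omega^{YA}(z,t)\sim z^{2\alpha-3}$. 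Feeding this decay rate into the generalized Gauss--Laguerre error estimate of Theorem \ref{ErrorQuad} -- the same mechanism that converted the decay $z^{\alpha-3}$ into $\mathcal{O}(N^{\alpha-2})$ for CDR and $z^{\alpha-2}$ into $\mathcal{O}(N^{\alpha-1})$ for SDR -- yields $R_{N,2\alpha-1}^{GGL}[e^{(\cdot)}\omega^{YA}(\cdot,t)]=\mathcal{O}(N^{2\alpha-2})$.

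For the ODE term I would invoke the analogue of Lemma \ref{Conv_ODE}: applying an A-stable one-step implicit method of order $p$ to the linear equation \eqref{ODE1}, whose stiffness (Lipschitz) constant at the node $z_k$ is $z_k^2$, gives $|E^{YA}_h(z_k,t)|\le Ch^p e^{3Tz_k^2}$ under the restriction $h<N^{-2}$ that keeps the scheme stable at the largest node. Summing these bounds against the positive Gauss--Laguerre weights and comparing the resulting sum with the corresponding integral, exactly as in Theorem \ref{BoundODESolver}, produces $|Q_{N,2\alpha-1}^{GGL}[e^{(\cdot)}E^{YA}_h(\cdot,t)]|\le C h^p\int_0^{4N}e^{3Tz^2}\,dz$, and adding the two estimates gives \eqref{OVERERRYA}. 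The hard part will be the quadrature bound: the function $e^{z}\omega^{YA}(z,t)$ that nominally appears under the weight grows exponentially, so one cannot use naive smoothness estimates for Gauss--Laguerre; the rate $\mathcal{O}(N^{2\alpha-2})$ must instead be extracted from the genuine algebraic decay $z^{2\alpha-3}$ of the integrand through the specialized convergence theory for generalized Gauss--Laguerre rules on algebraically decaying integrands (as in \cite{Diethelm2008}), and checking that the Yuan--Agrawal integrand satisfies the hypotheses of that theory is where the real work lies. The ODE-solver contribution, by contrast, is a routine transcription of the CDR and SDR argument with the stiffness coefficient $z^2$ in place of the earlier coefficients.
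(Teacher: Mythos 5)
Your proposal is correct and takes essentially the same route as the paper, whose own proof of this theorem is simply a citation to Theorem 6 of \cite{Diethelm2008}: you reconstruct exactly that argument, namely the quadrature-error/ODE-solver decomposition the paper uses for CDR and SDR in \eqref{Resid_CDR}--\eqref{Resid_SDR}, with the asymptotics $z^{2\alpha-1}$ at the origin and $z^{2\alpha-3}$ at infinity feeding the generalized Gauss--Laguerre estimate of Theorem \ref{ErrorQuad}, and the stiffness constant $z_k^2$ feeding the analogues of Lemma \ref{Conv_ODE} and Theorem \ref{BoundODESolver}. The only difference is that you carry out in full the argument the paper delegates to the reference.
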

\begin{proof}
	See Theorem 6 of \cite{Diethelm2008}.
\end{proof}
\section{Numerical results}\label{Sec_3}
In this position, we proceed to testify the numerical methods with some examples. To make a good comparison, the proposed methods CDR and SDR have compared with  the Yuan and Agrawal method (YA). To do so, we first denote 
\[
E_{\infty}(N)=\max_{t\in[a,b]}\left|{}^{C}D_{0^+}^{\alpha}y(t)-{}^{C}D_{0^+,N,h}^{\alpha}y(t)\right|,
\]
for the maximum errors obtained by the methods YA, CDR and SDR for fixed $N$  and $t$ varies on the domain $[a,b]$.
\begin{Example}\label{Ex_1}
	For the first example we consider \cite{Diethelm2021,Diethelm2008}
	\[
	y(t)=t^{1.6},\ \ t\in[0,3],
	\]
	where 
	\[
	{}^{C}D_{0^+}^{\alpha}y(t)=\frac{\Gamma(2.6)}{\Gamma(2.6-\alpha)}t^{1.6-\alpha},\ \alpha=0.4.
	\]
	We also note that $y\in C^1[0,3]$. Relative errors of the approximation of ${}^{C}D_{0^+}^{\alpha}y(t)$ by the backward Euler method for some values of $n$  with the use of  the generalized Gauss-Laguerre quadrature rule with $N=50$-points, obtained from YA, CDR and SDR methods are plotted in Figs. \ref{Fig_1}-\ref{Fig_2}. Also, a comparison of  maximum absolute errors of approximations obtained from YA, CDR and SDR methods for some values of $N$ and $n=10^4$  is shown in Fig. \ref{Fig_3}.
	\newline
	
	\begin{figure}[H]
		\vspace{-5cm}
		\includegraphics[width=12cm]{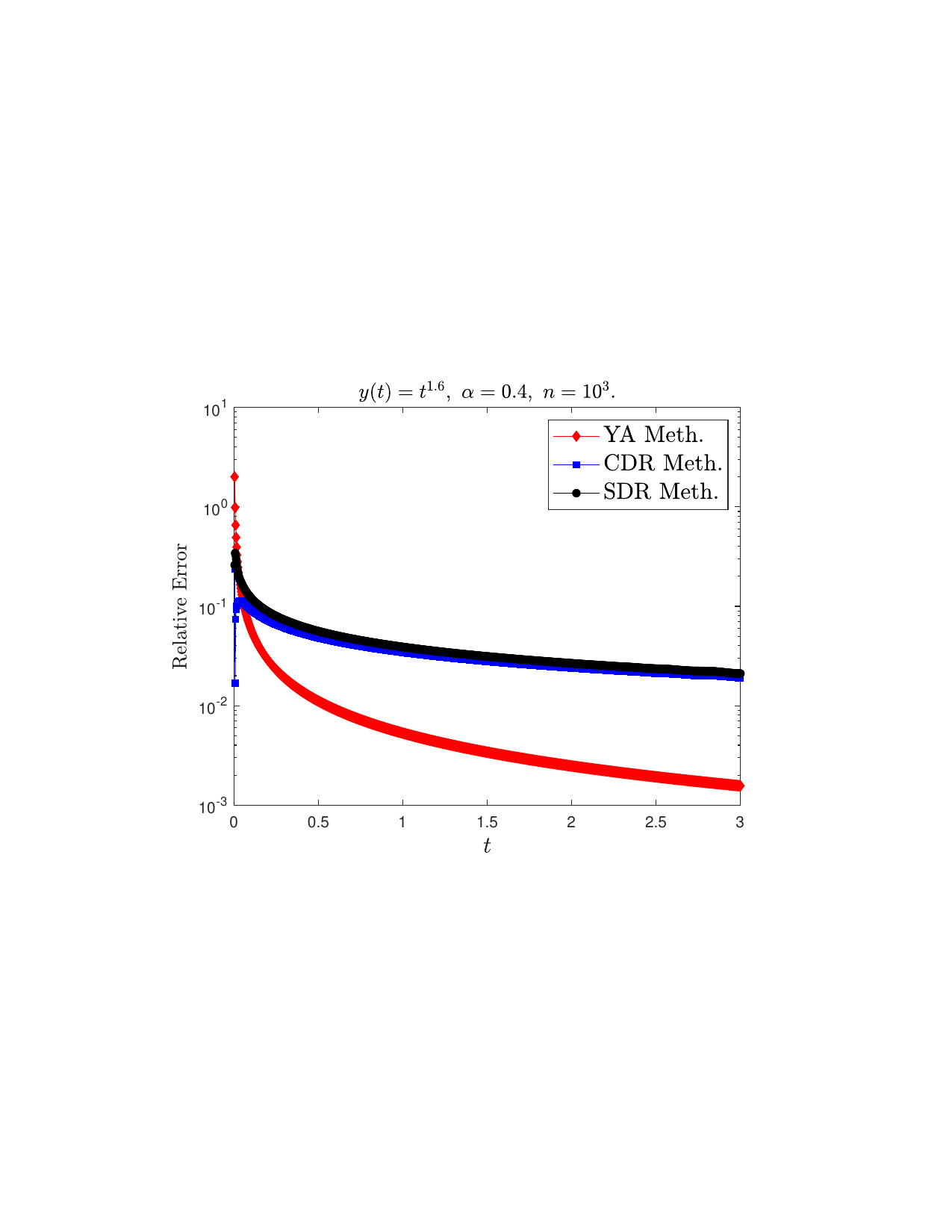}\hspace{-3cm}\includegraphics[width=12cm]{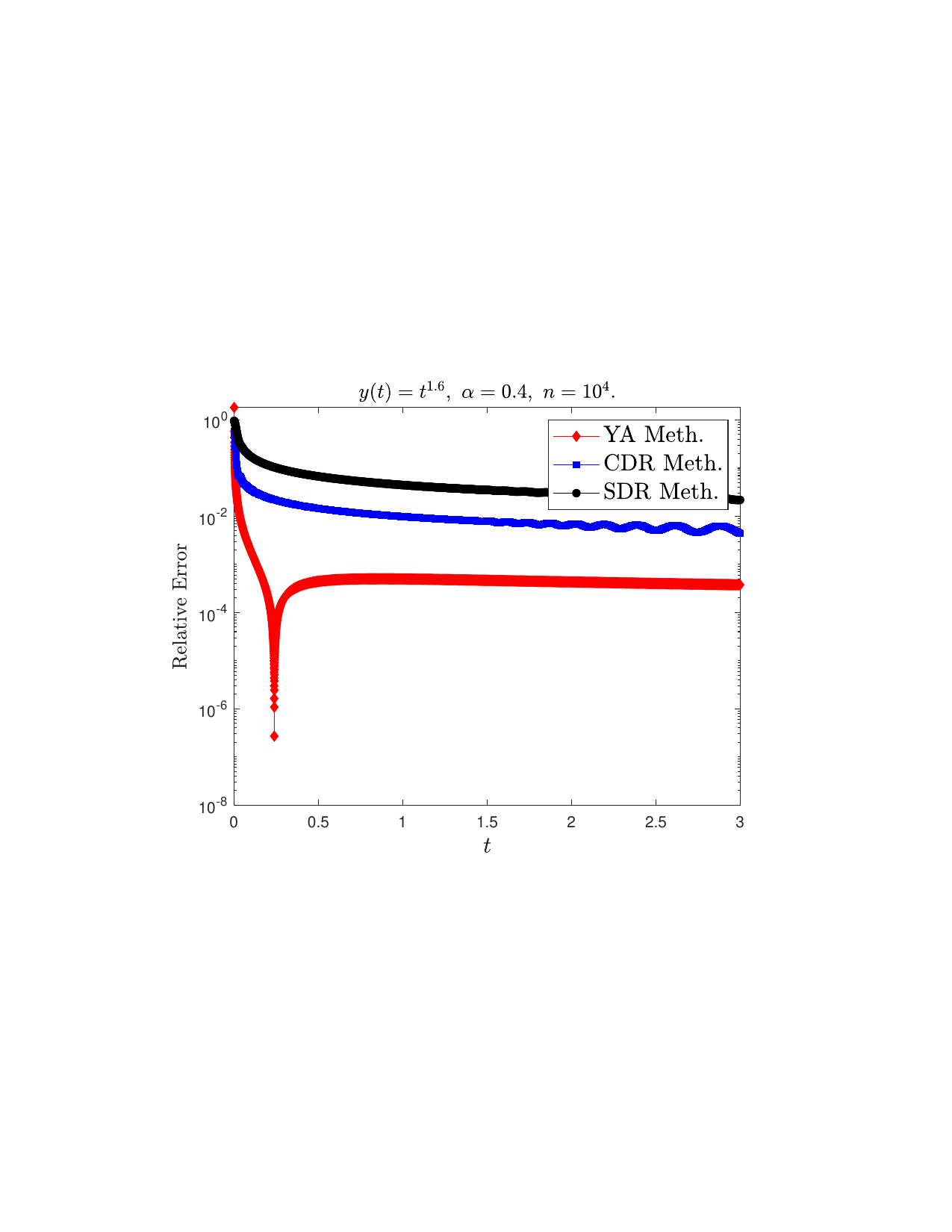}\\ \vspace{-5cm}
		\caption{Relative errors obtained by the backward Euler method for  three  methods YA, CDR and SDR for $n=10^3$ and $n=10^4$.}\label{Fig_1}
	\end{figure}
	\begin{figure}[H]
		\vspace{-4cm}
		\includegraphics[width=12cm]{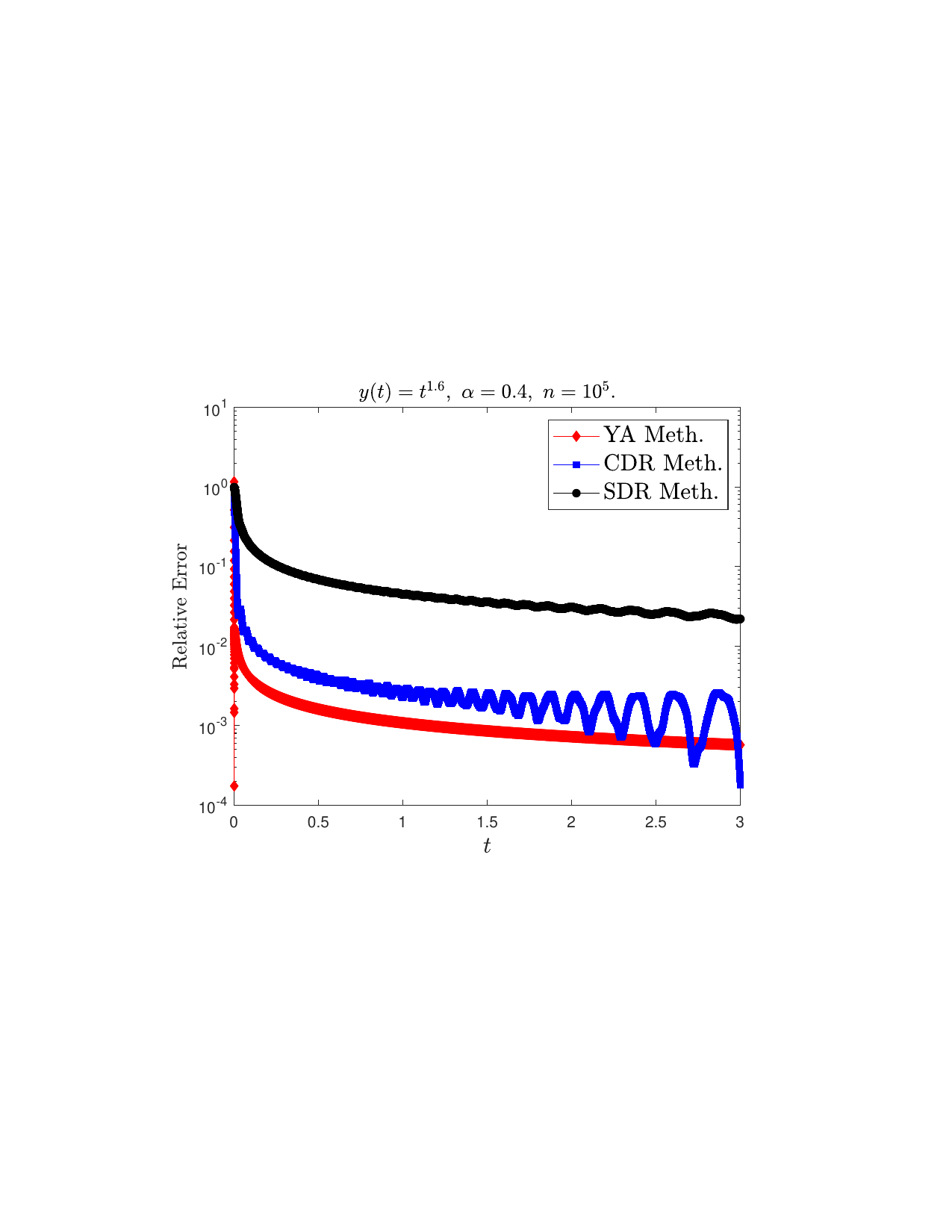}\hspace{-3cm}\includegraphics[width=12cm]{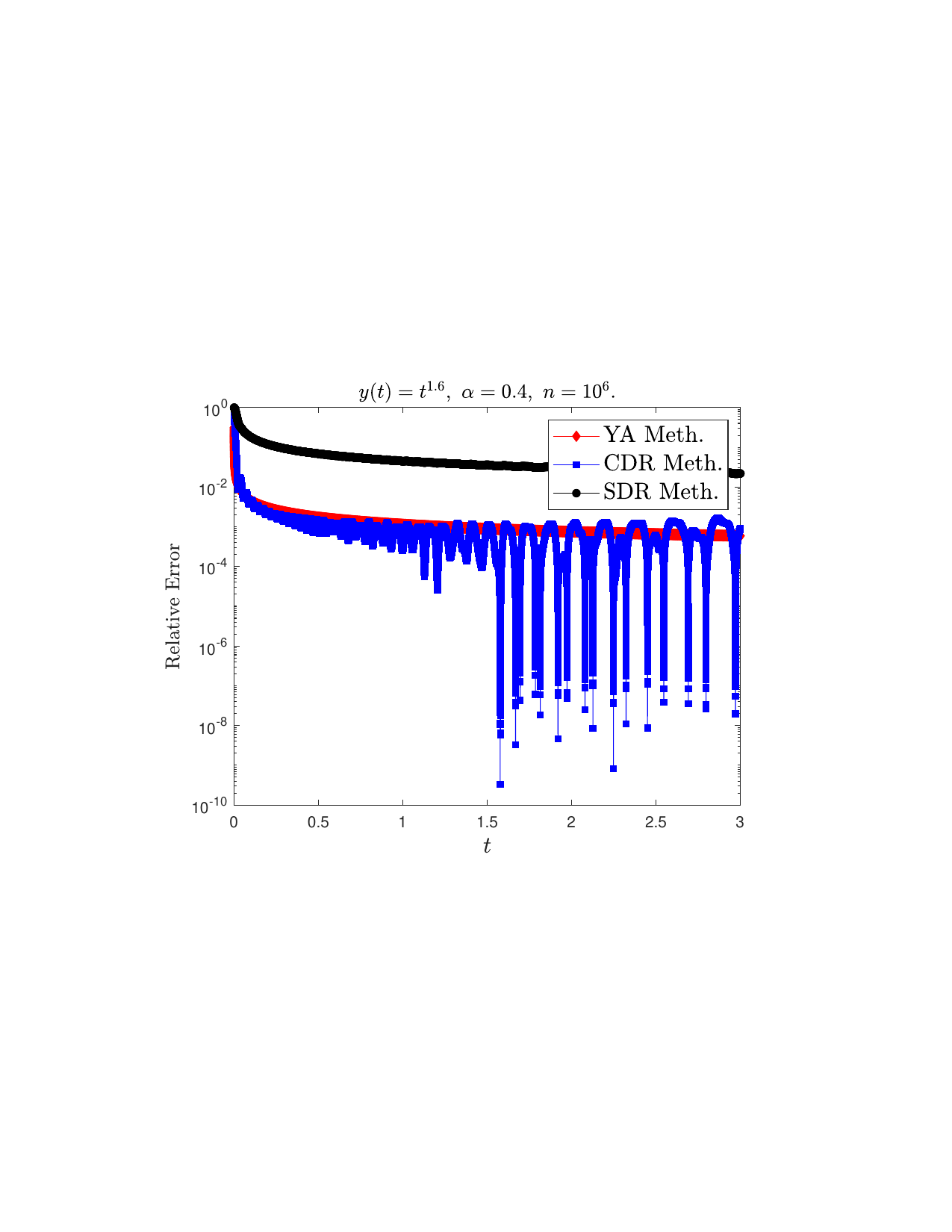}\\ \vspace{-5cm}
		\caption{Relative errors obtained by the backward Euler method for  three  methods YA, CDR and SDR  for $n=10^5$ and $n=10^6$.}\label{Fig_2}
	\end{figure}
	\begin{figure}[H]
		\vspace{-4cm}
		\includegraphics[width=7.cm,height=15cm]{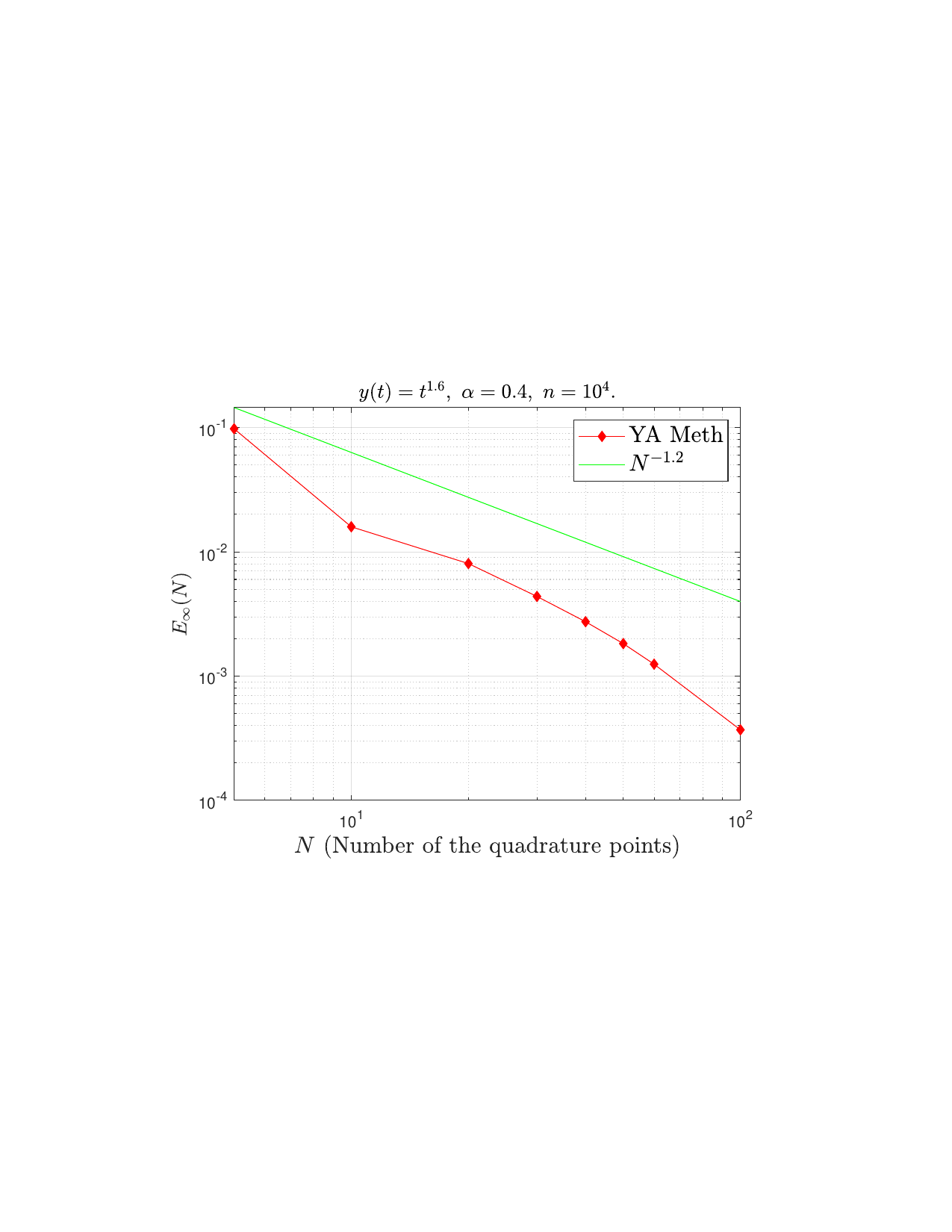}\hspace{-1.5cm}\includegraphics[width=7.cm,height=15cm]{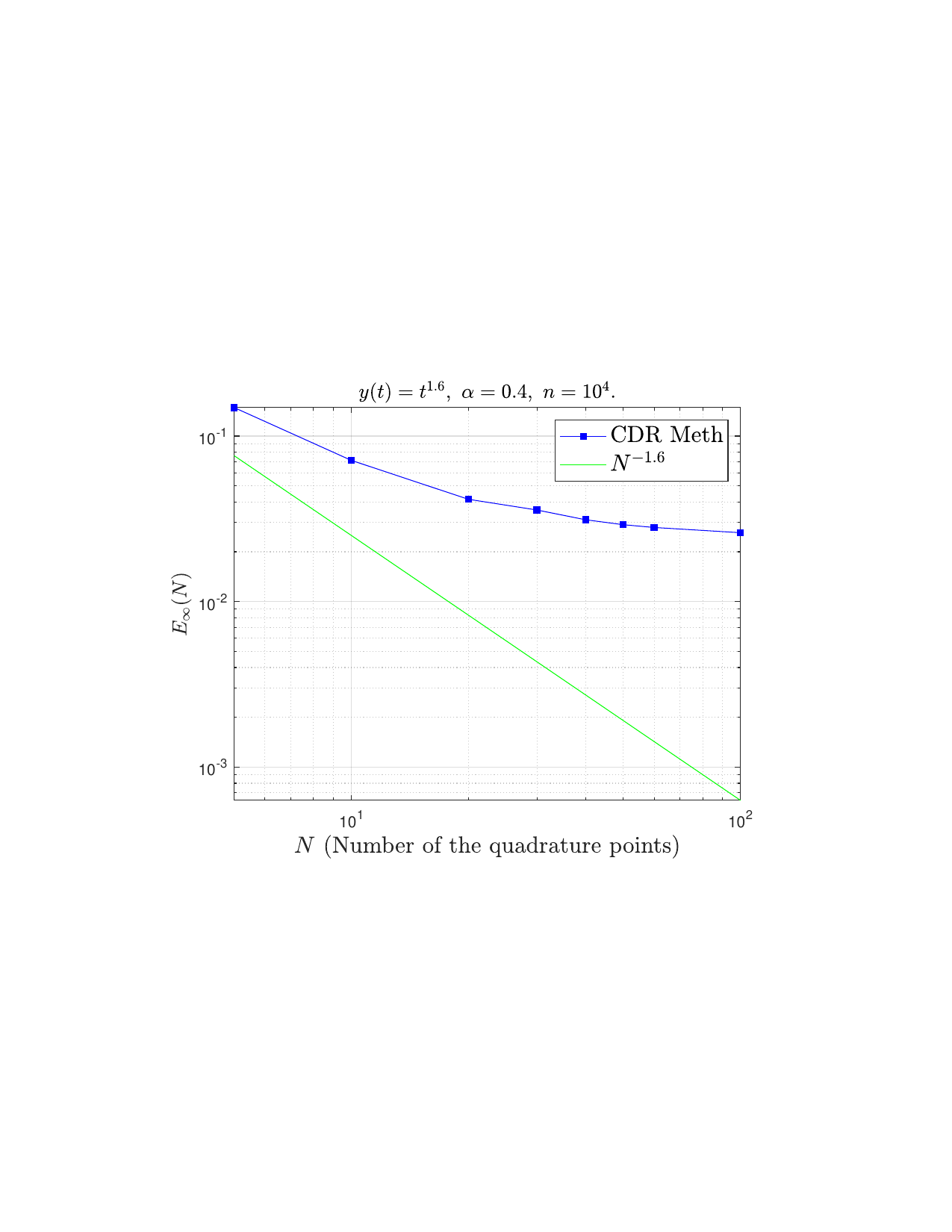}\hspace{-1.5cm}\includegraphics[width=7.cm,height=15cm]{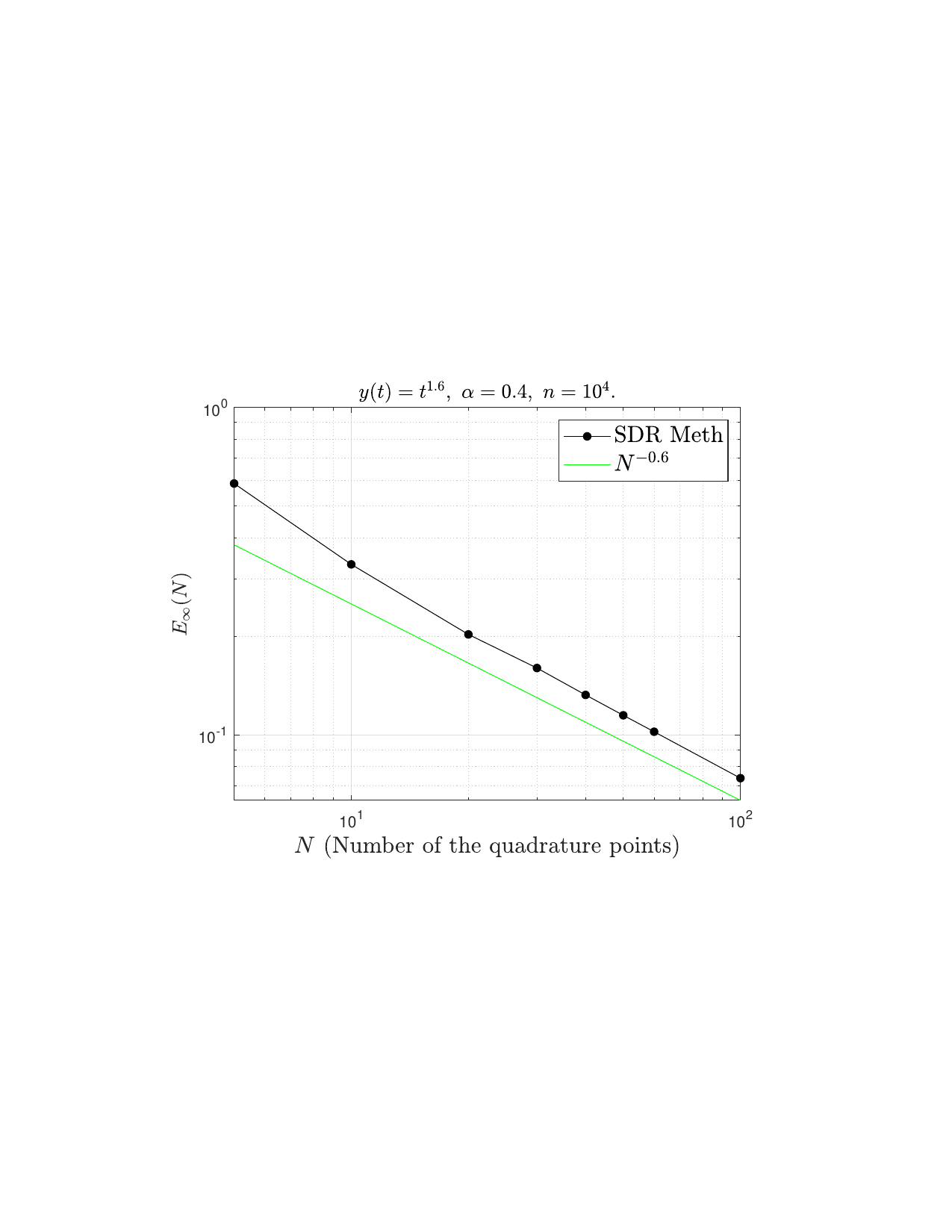}\\ \vspace{-2cm}
		\vspace{-2.5cm}
		\caption{Maximum errors obtained by the backward Euler method for three  methods YA, CDR and SDR with $n=10^4$ and various values of $N$.}\label{Fig_3}
	\end{figure}
\end{Example}
\begin{Example}\label{Ex_2}
	For the second example, consider the following sufficiently smooth function  \cite{Diethelm2021,Diethelm2008}:
	\[
	y(t)=t^{3},\ \ t\in[0,1],
	\]
	where 
	\[
	{}^{C}D_{0^+}^{\alpha}y(t)=\frac{\Gamma(4)}{\Gamma(4-\alpha)}t^{3-\alpha},\ \alpha=0.6.
	\]
	We also have, $y\in C^{\infty}[0,1]$.
	Similarly, the relative errors of approximation ${}^{C}D_{0^+}^{\alpha}y(t)$ (with $N=50$) for  three methods YA, CDR and SDR are also  plotted in Figs. \ref{Fig_4} and \ref{Fig_5}.
	A comparisons of the maximum errors of approximation ${}^{C}D_{0^+}^{\alpha}y(t)$ (with $N=50$) is shown in Fig. \ref{Fig_6}.
	
	\begin{figure}[H]
		\vspace{-5cm}
		\includegraphics[width=12cm]{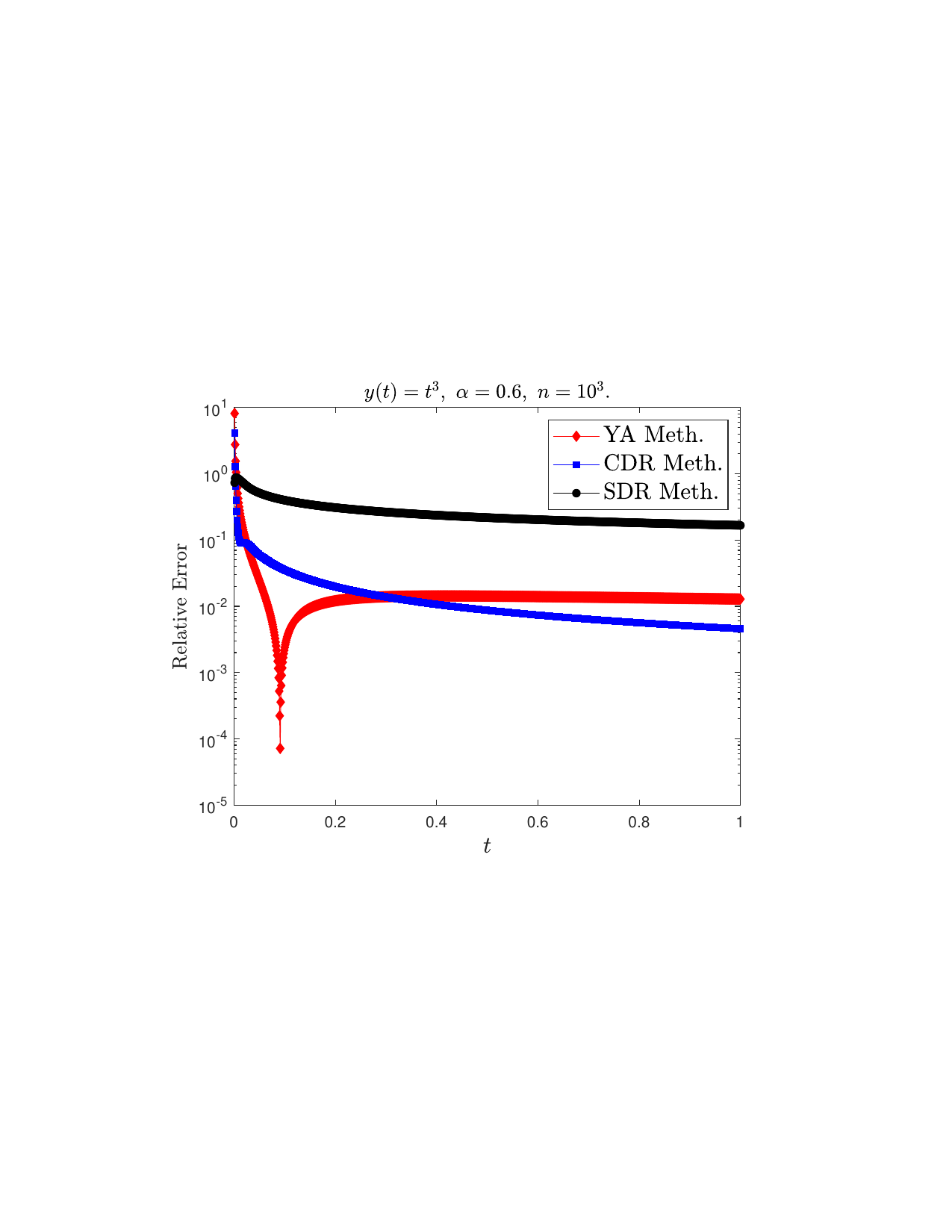}\hspace{-3cm}\includegraphics[width=12cm]{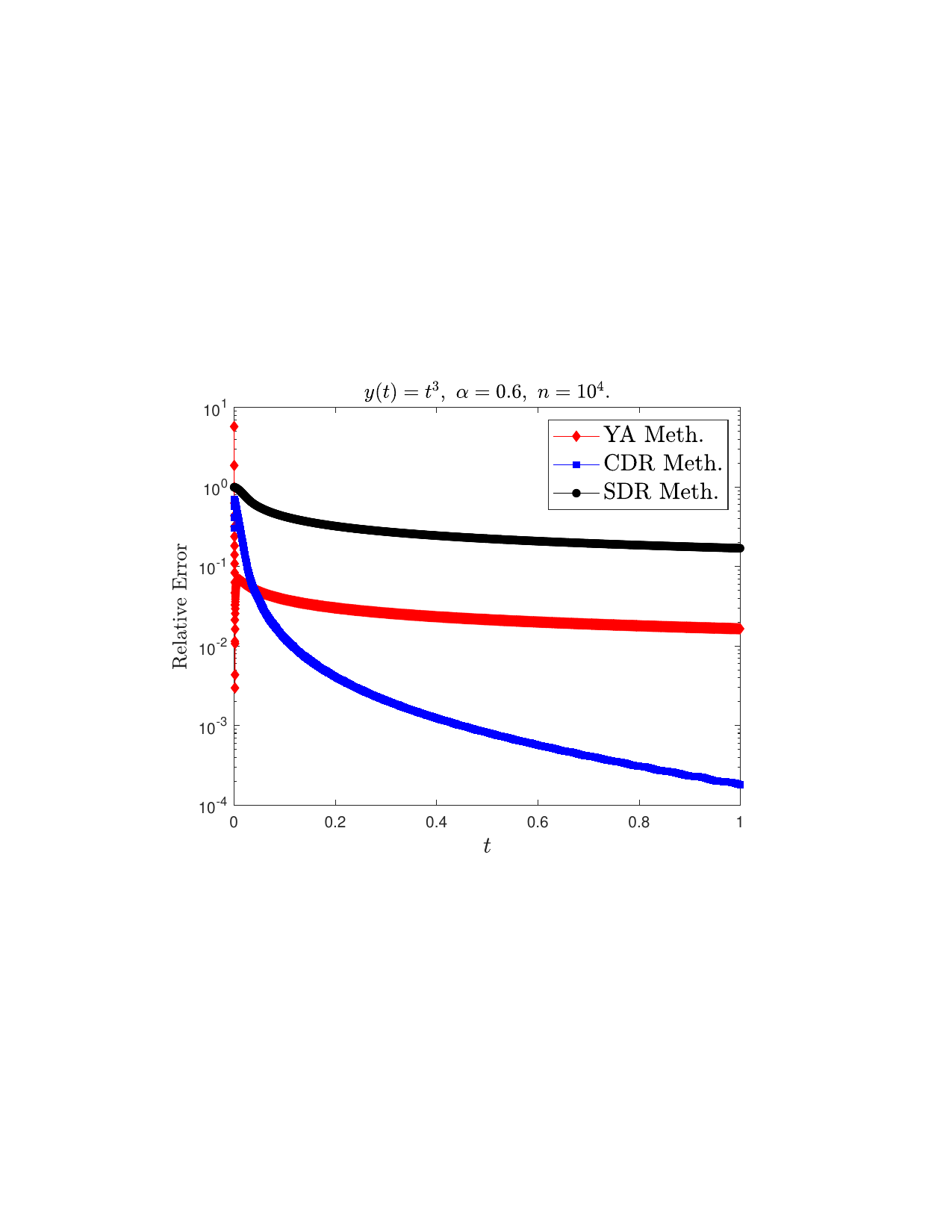}\\ \vspace{-5cm}
		\caption{The relative errors obtained by the backward Euler method for  three  methods YA, CDR and SDR for $n=10^3$ and $n=10^4$.}\label{Fig_4}
	\end{figure}
	\begin{figure}[H]
		\vspace{-4cm}
		\includegraphics[width=12cm]{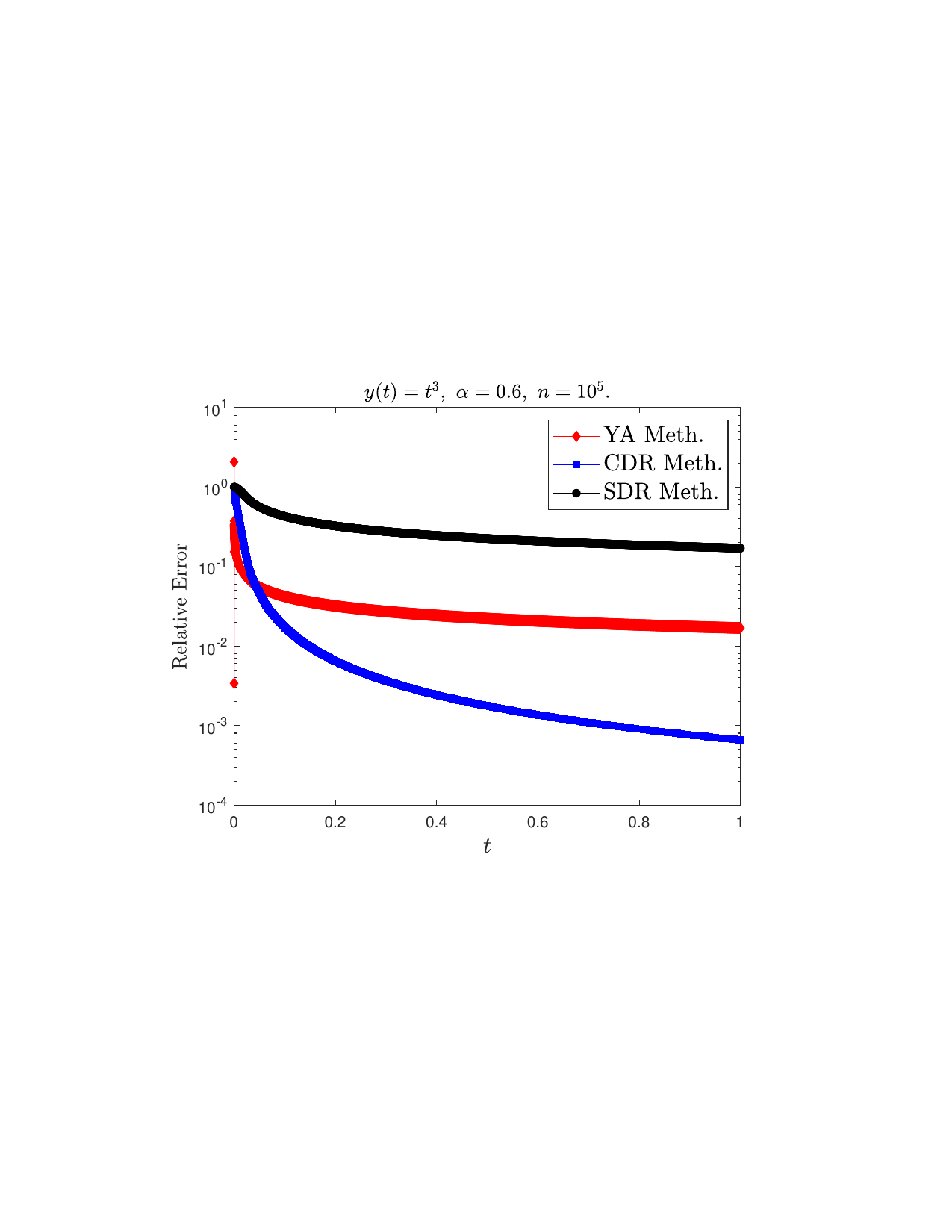}\hspace{-3cm}\includegraphics[width=12cm]{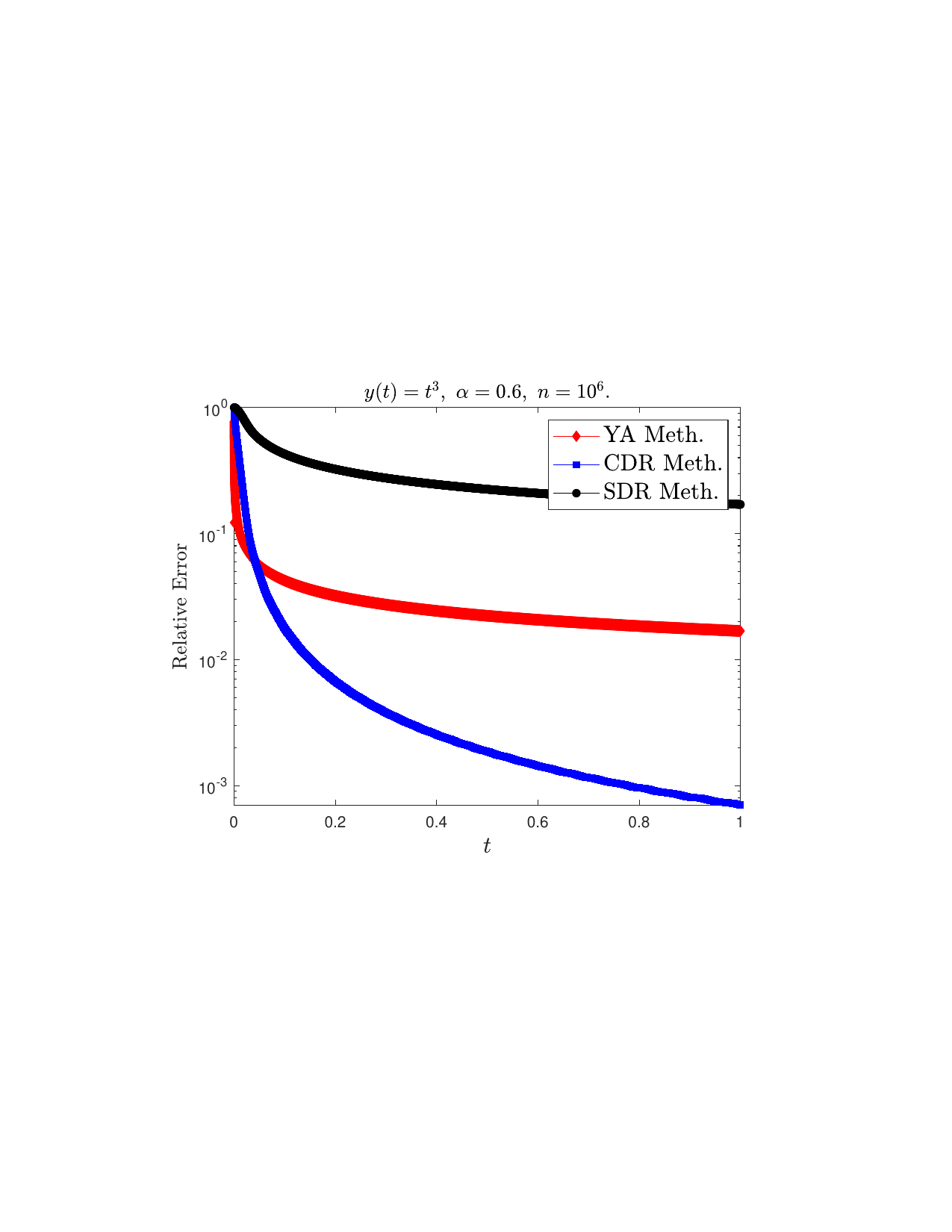}\\ \vspace{-5cm}
		\caption{The relative errors obtained by the backward Euler method for  three  methods YA, CDR and SDR for $n=10^5$ and $n=10^6$.}\label{Fig_5}
	\end{figure}
	\begin{figure}[H]
		\vspace{-4cm}
		\includegraphics[width=7cm,height=15cm]{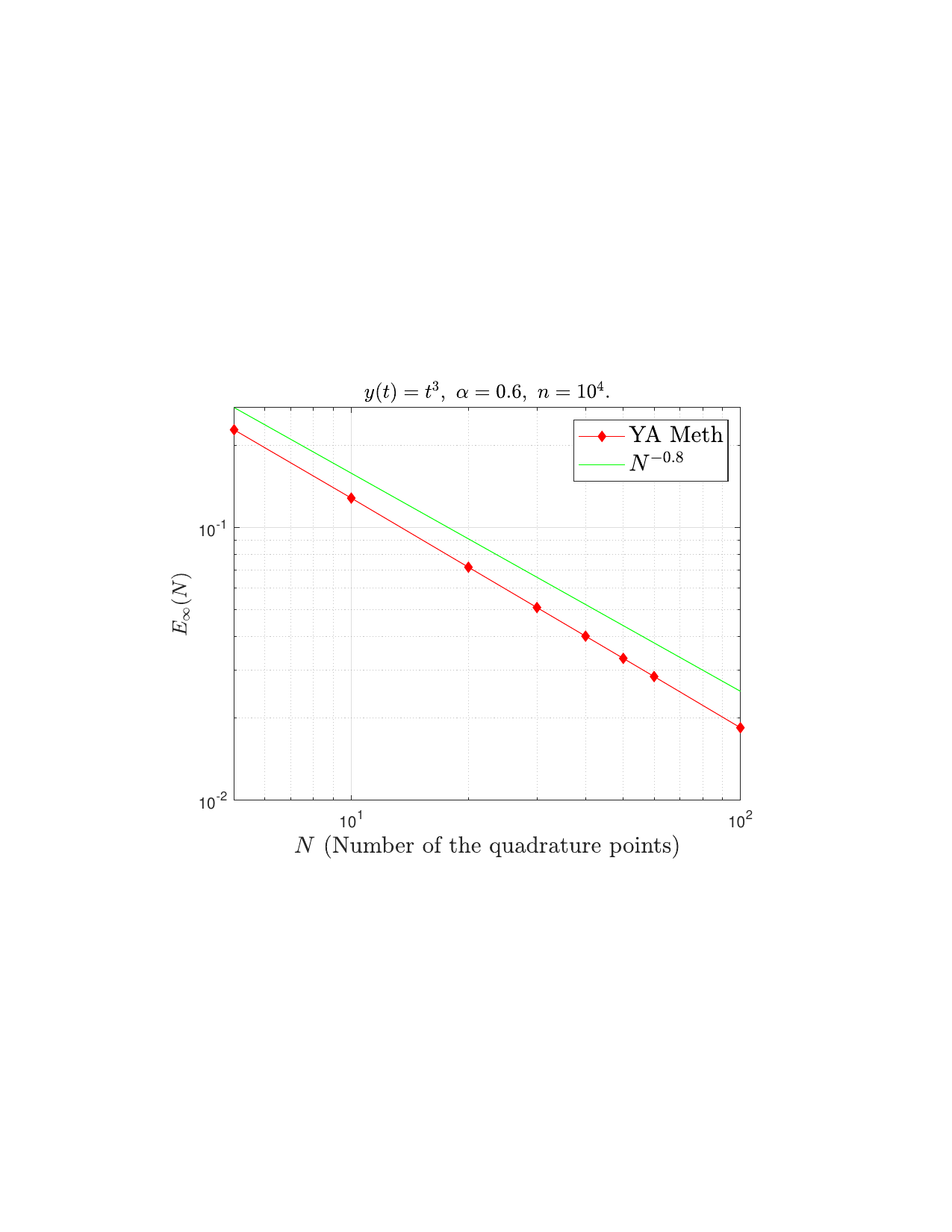}\hspace{-1.5cm}\includegraphics[width=7cm,height=15cm]{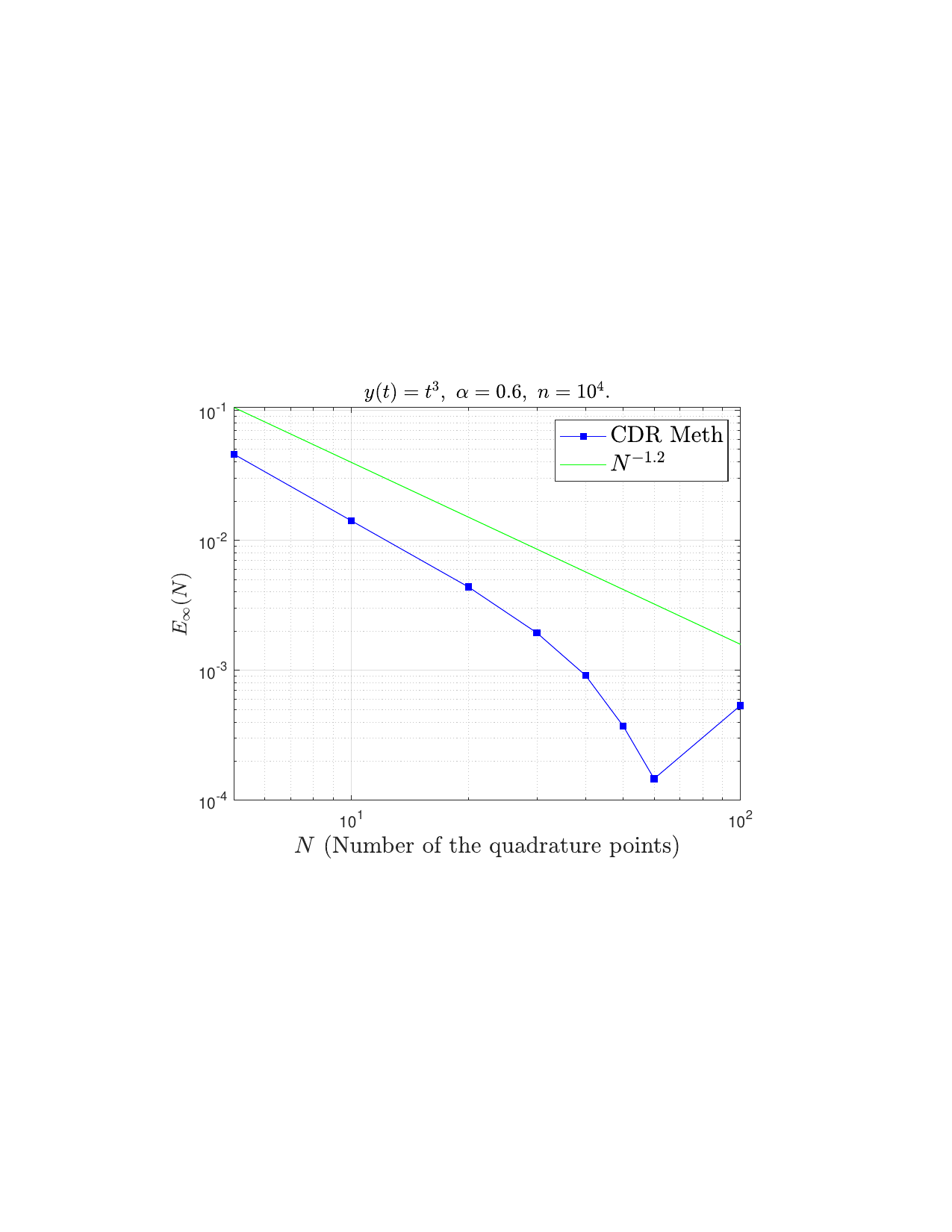}\hspace{-1.5cm}\includegraphics[width=7cm,height=15cm]{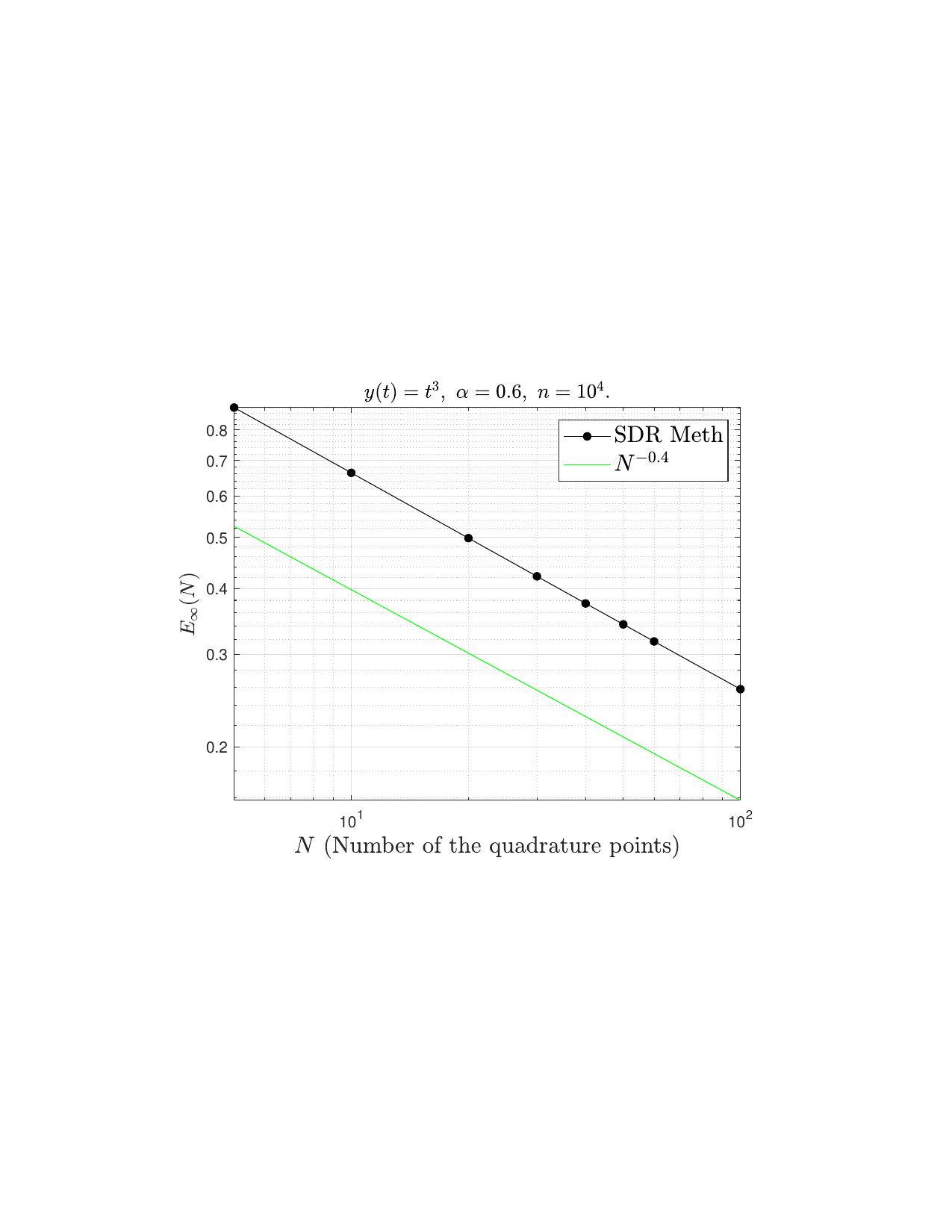}\\ \vspace{-2cm}
		\vspace{-2.5cm}
		\caption{Maximum norm of the errors obtained by the backward Euler method for three  methods YA, CDR and SDR with $n=10^4$ and various values of $N$.}\label{Fig_6}
	\end{figure}
\end{Example}
\begin{Example}\label{Ex_3}
	For the third example consider the sufficiently smooth and periodic function \cite{Sugiura2009}:
	\[
	y(t)=\sin t, \ t\in[0,1],
	\]	 
	where 
	\[
	{}^{C}D_{0^+}^{\alpha}y(t)=t^{1-\alpha}\sum_{k=0}^{+\infty}\frac{(-t)^{2k}}{\Gamma(2k+2-\alpha)},\ \alpha=0.5.
	\] 
	
	As we know,  $y\in C^{\infty}[0,1]$. The relative errors obtained by the YA, CDR and SDR methods to approximate ${}^{C}D_{0^+}^{\alpha}y(t)$ for $\alpha=0.5$ and $N=50$ versus some valued of $n$ have been reported in Figs. \ref{Fig_7} and \ref{Fig_8}. 
	
	We also report the maximum errors of the methods for some values of $N$ with $n=10^4$ in Fig. \ref{Fig_9}.
	\begin{figure}[H]
		\vspace{-5cm}
		\includegraphics[width=12cm]{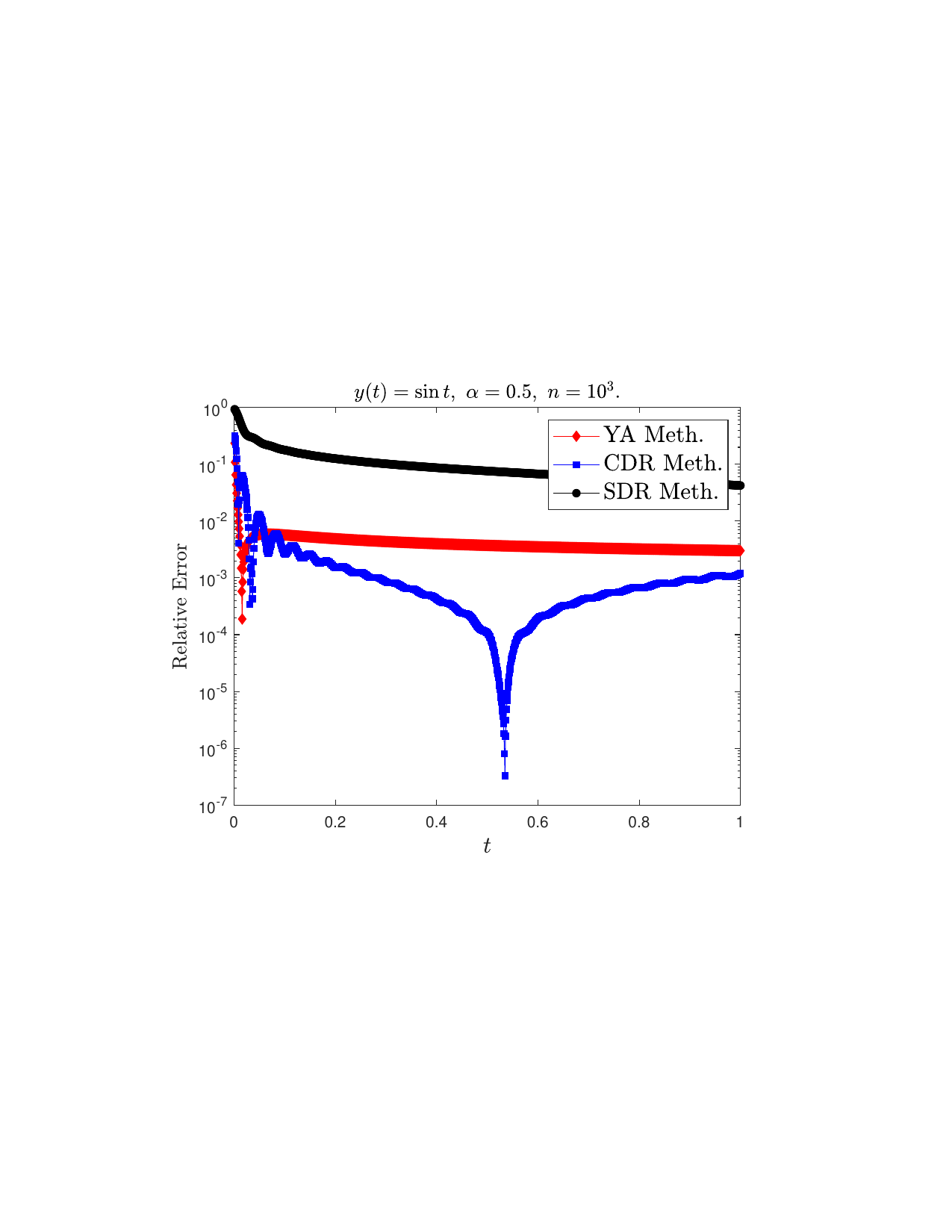}\hspace{-3cm}\includegraphics[width=12cm]{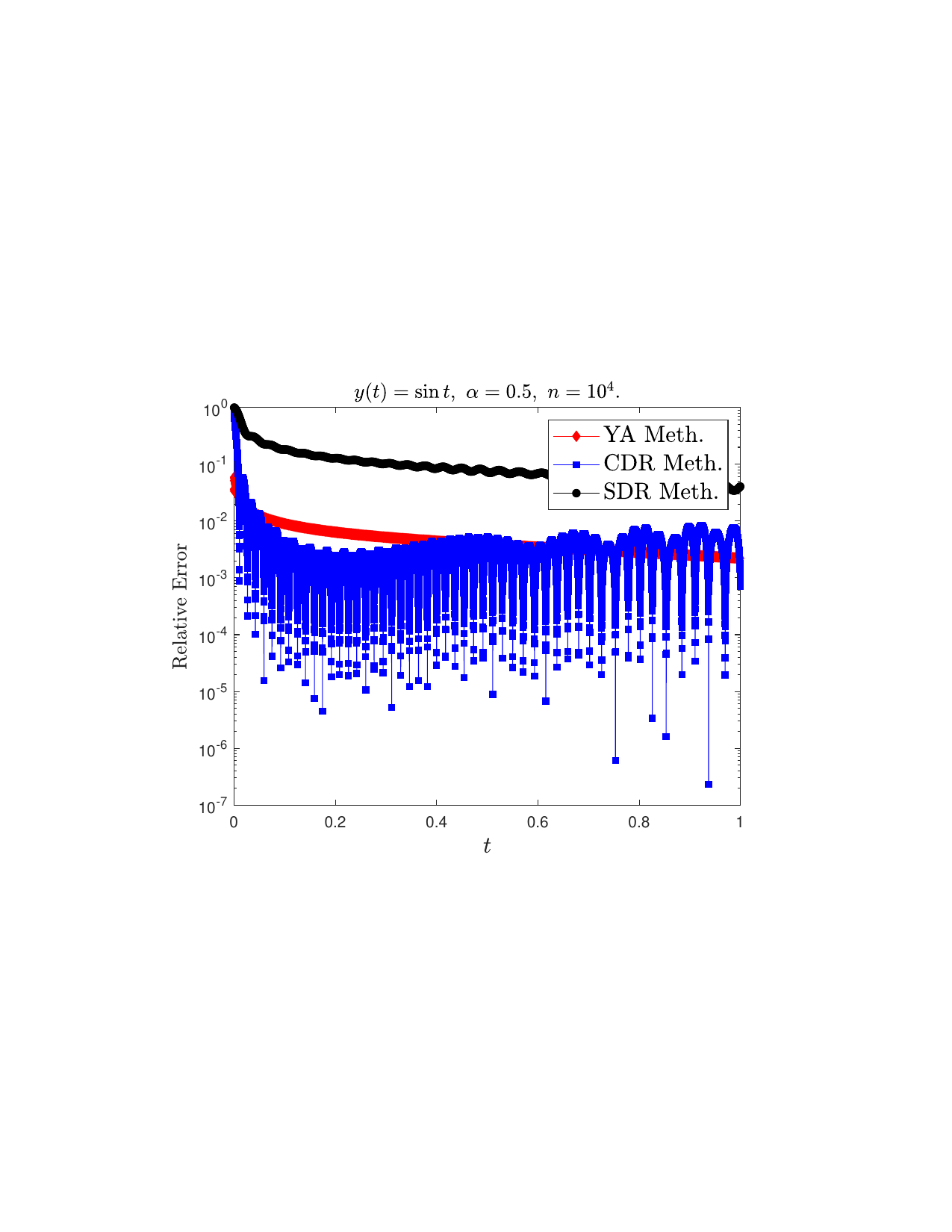}\\ \vspace{-5cm}
		\caption{The relative errors obtained by the backward Euler method for  three  methods YA, CDR and SDR for $n=10^3$ and $n=10^4$.}\label{Fig_7}
	\end{figure}
	\begin{figure}[H]
		\vspace{-4cm}
		\includegraphics[width=12cm]{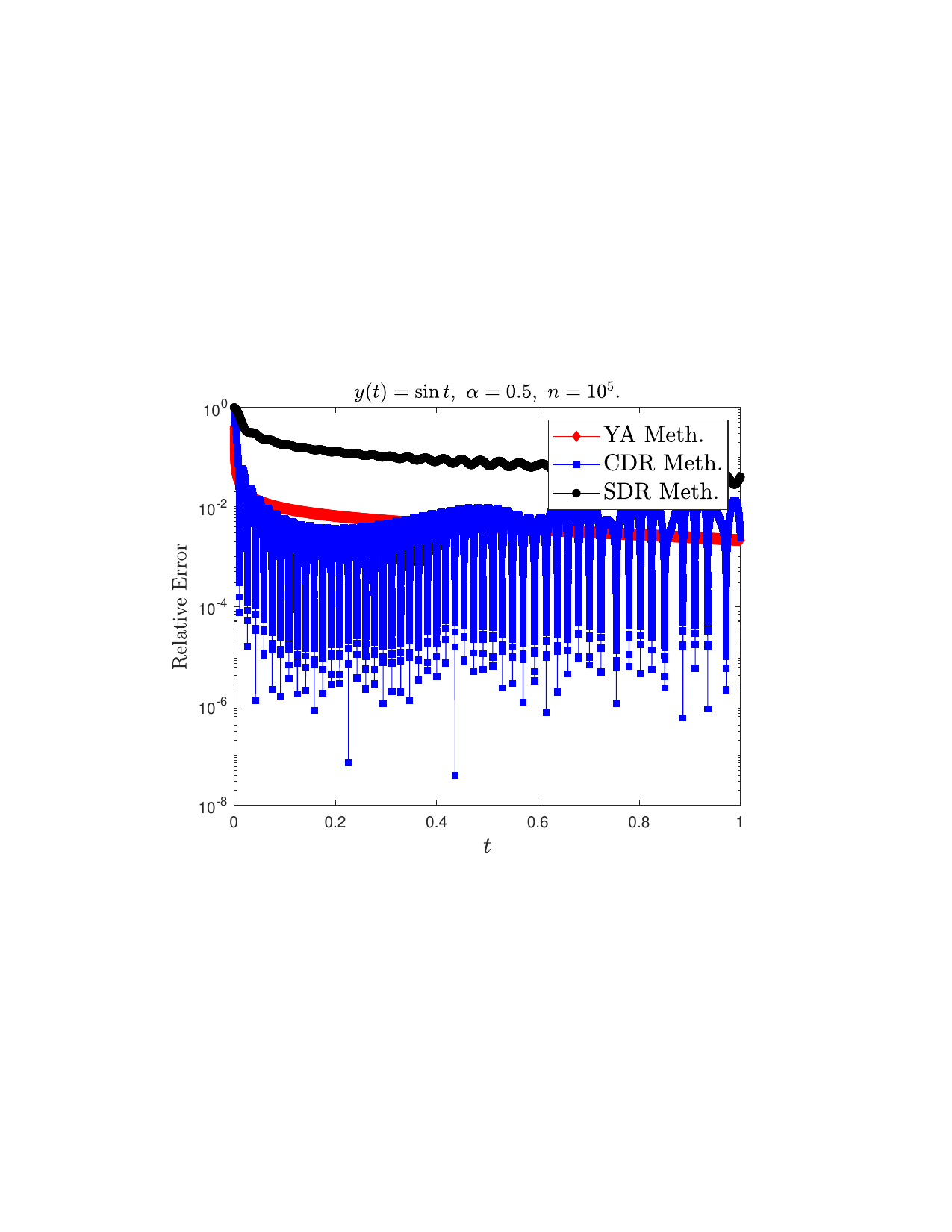}\hspace{-3cm}\includegraphics[width=12cm]{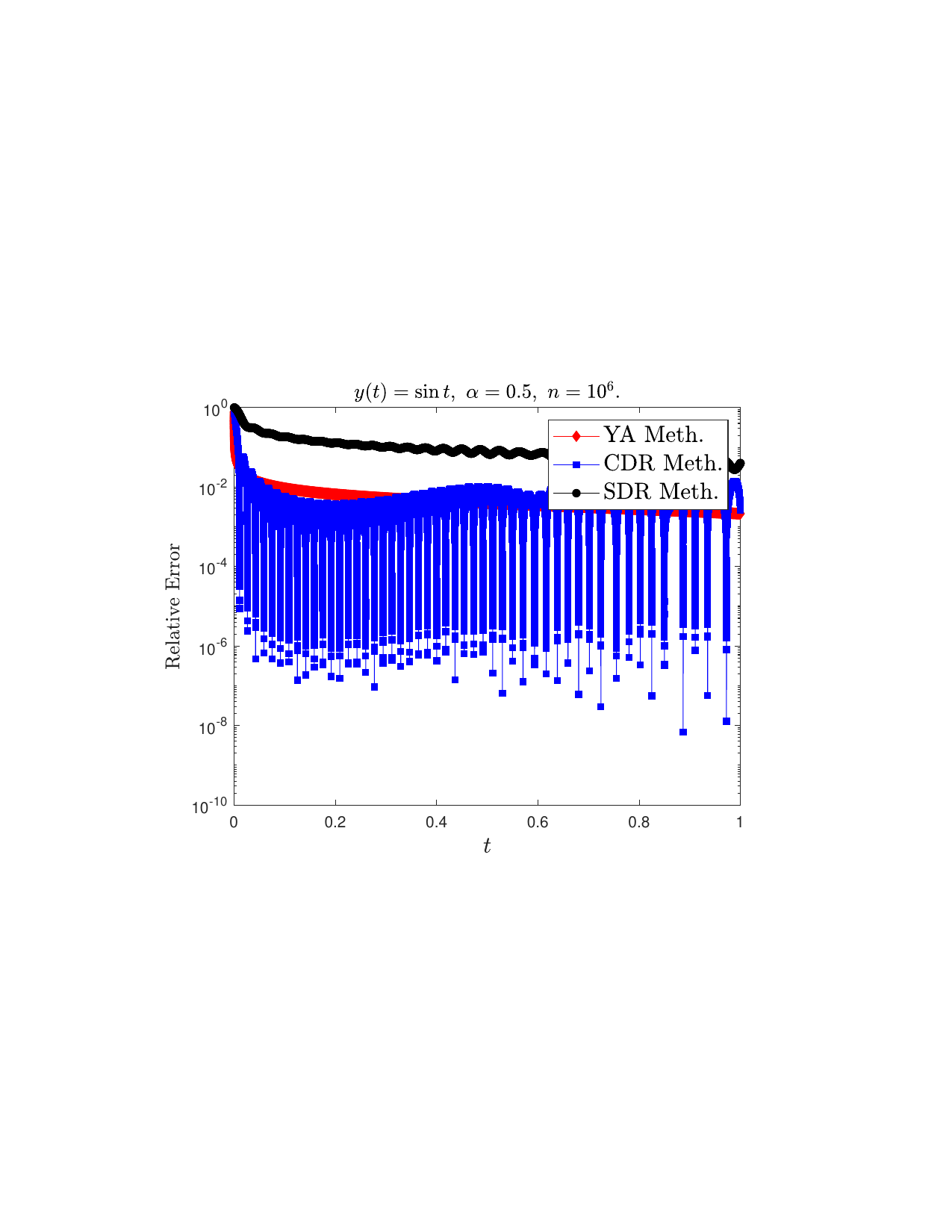}\\ \vspace{-5cm}
		\caption{The relative errors obtained by the backward Euler method for  three  methods YA, CDR and SDR for $n=10^5$ and $n=10^6$.}\label{Fig_8}
	\end{figure}
	\begin{figure}[H]
		\vspace{-4cm}
		\includegraphics[width=7cm,height=15cm]{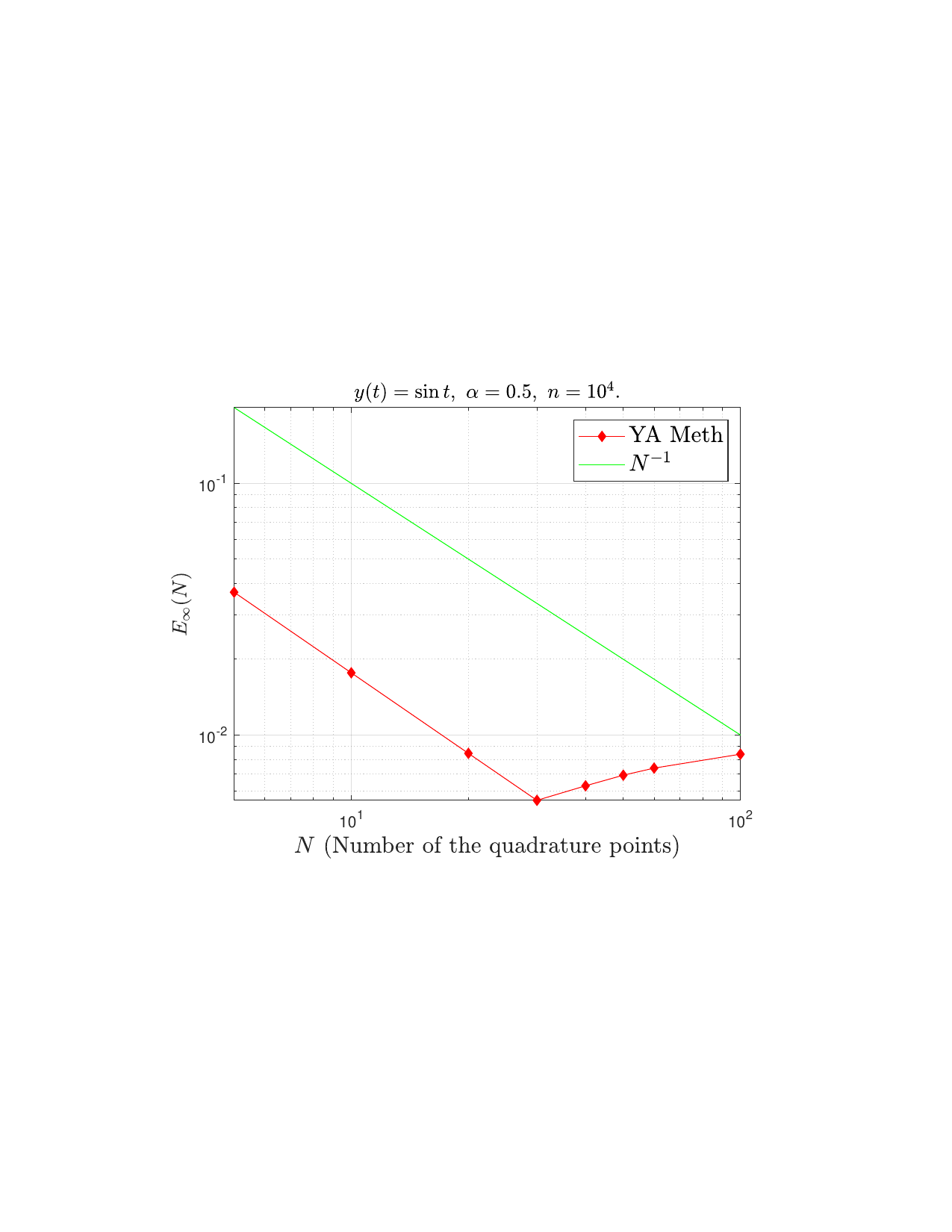}\hspace{-1.5cm}\includegraphics[width=7cm,height=15cm]{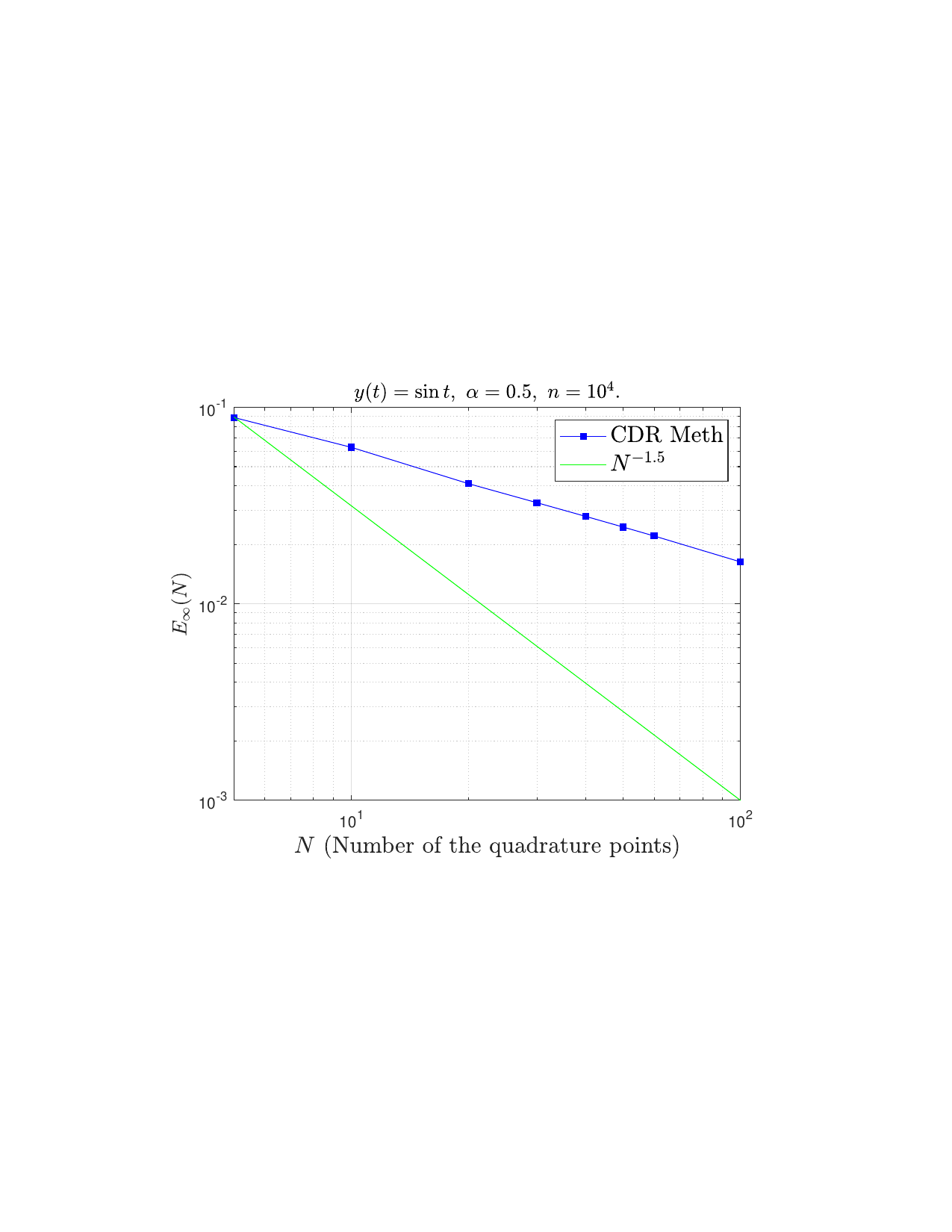}\hspace{-1.5cm}\includegraphics[width=7cm,height=15cm]{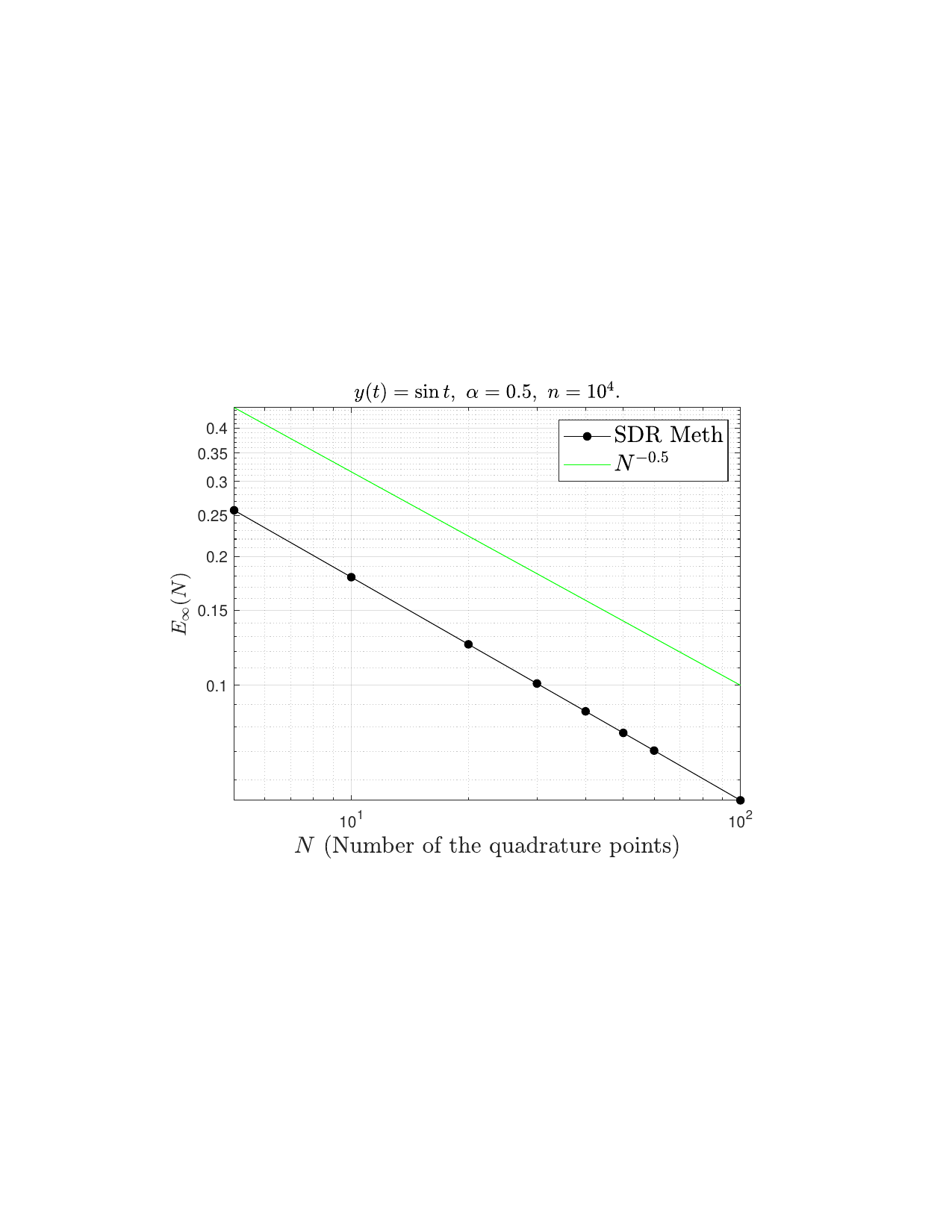}\\ \vspace{-2cm}
		\vspace{-2.5cm}
		\caption{Maximum norm of the errors obtained by the backward Euler method for three  methods YA, CDR and SDR with $n=10^4$ and various values of $N$.}\label{Fig_9}
	\end{figure}
\end{Example}
\begin{Example}\label{Ex_4}
	For the last example consider the function \cite{Sugiura2009}:
	\[
	y(t)=t^{\frac{\nu}{2}}J_\nu(2\sqrt{t}), \ t\in[0,1], \ \nu=3,
	\]	 
	where $J_\nu(z)$ is the Bessel function of the first kind. It is easy to show that:
	\[
	{}^{C}D_{0^+}^{\alpha}y(t)=t^{\frac{\nu-\alpha}{2}}J_{\nu-\alpha}(2\sqrt{t}),\ \alpha=0.5.
	\]
	Due to the fact that (cf. Property 2.3 of \cite{MR3742072}):
	\[
	J_\nu(t)\sim \frac{t^{\nu}}{2^\nu\Gamma(\nu+1)},\ t\to 0,\ \nu\ne-1,-2,\cdots.
	\]
	It is easy to verify that for $\nu=3$, we have $y\in C^3[0,1]$. The relative errors of the approximations of the Caputo fractional derivative of order $\alpha=0.5$ of the function $y(t)$ obtained from three mentioned methods are depicted in Figs. \ref{Fig_10} and \ref{Fig_11}. The maximum errors of the approximations for some values of $N$ with $n=10^4$ are also graphed in Fig. \ref{Fig_12}.
	\begin{figure}[H]
		\vspace{-5cm}
		\includegraphics[width=12cm]{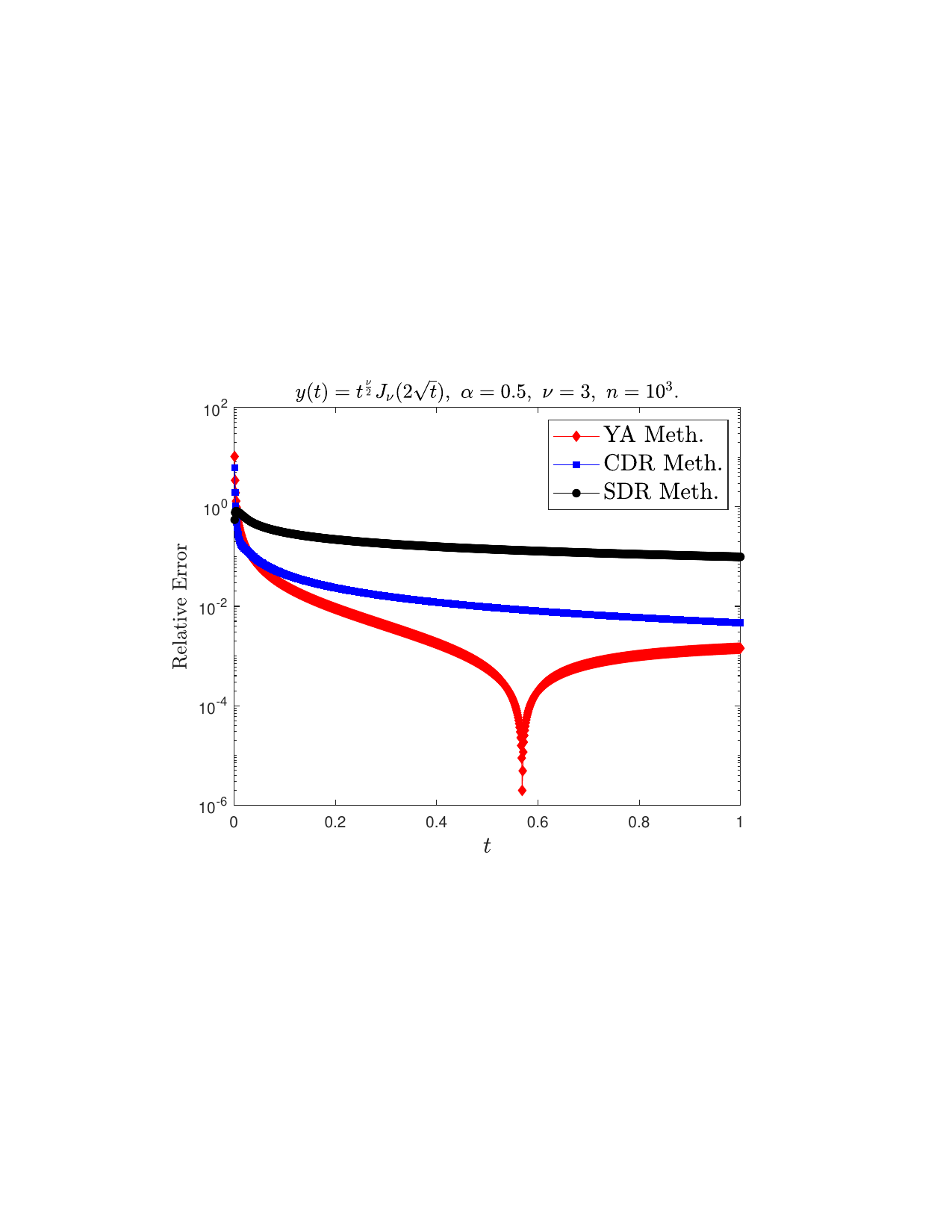}\hspace{-3cm}\includegraphics[width=12cm]{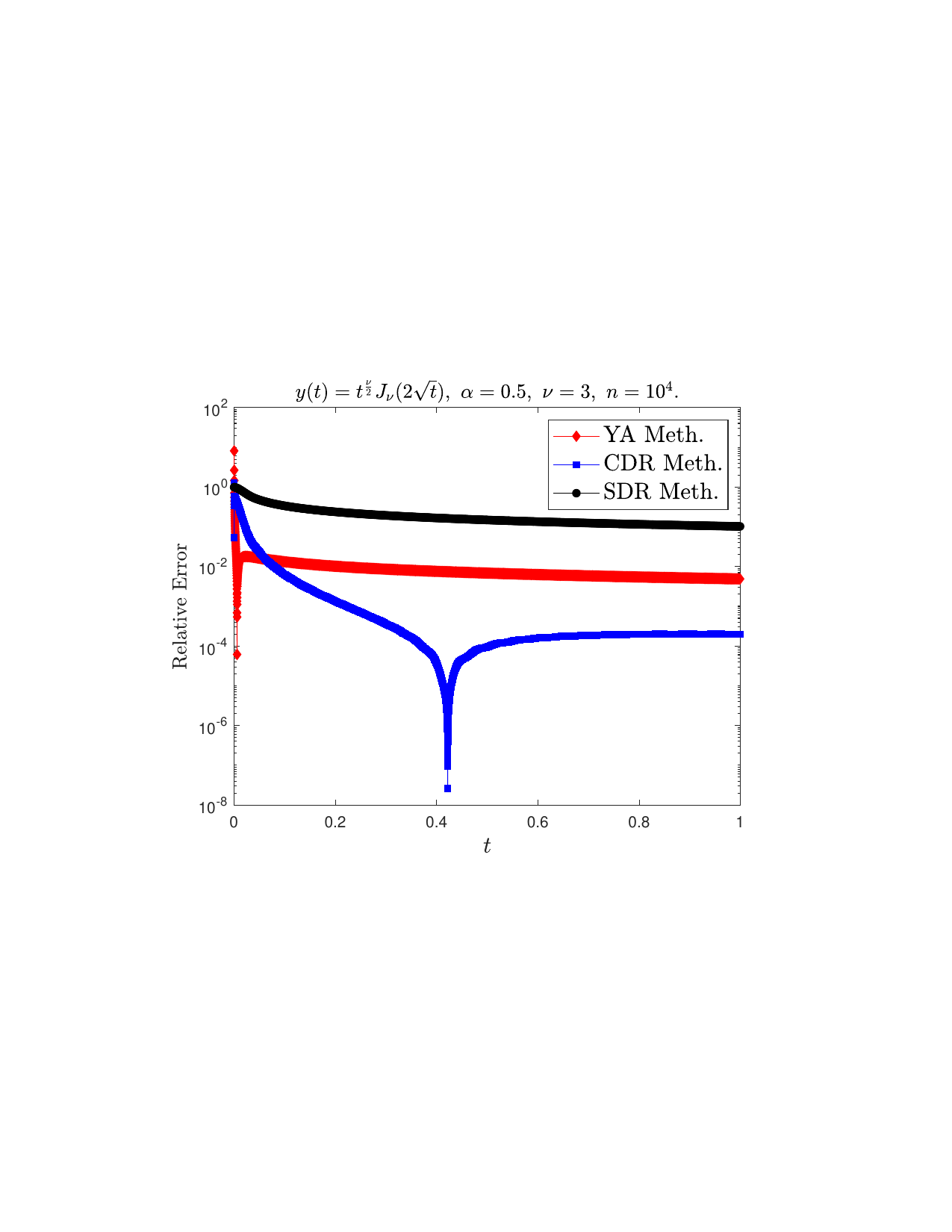}\\ \vspace{-5cm}
		\caption{The relative errors obtained by the backward Euler method for three  methods YA, CDR and SDR for $n=10^3$ and $n=10^4$.}\label{Fig_10}
	\end{figure}
	\begin{figure}[H]
		\vspace{-4cm}
		\includegraphics[width=12cm]{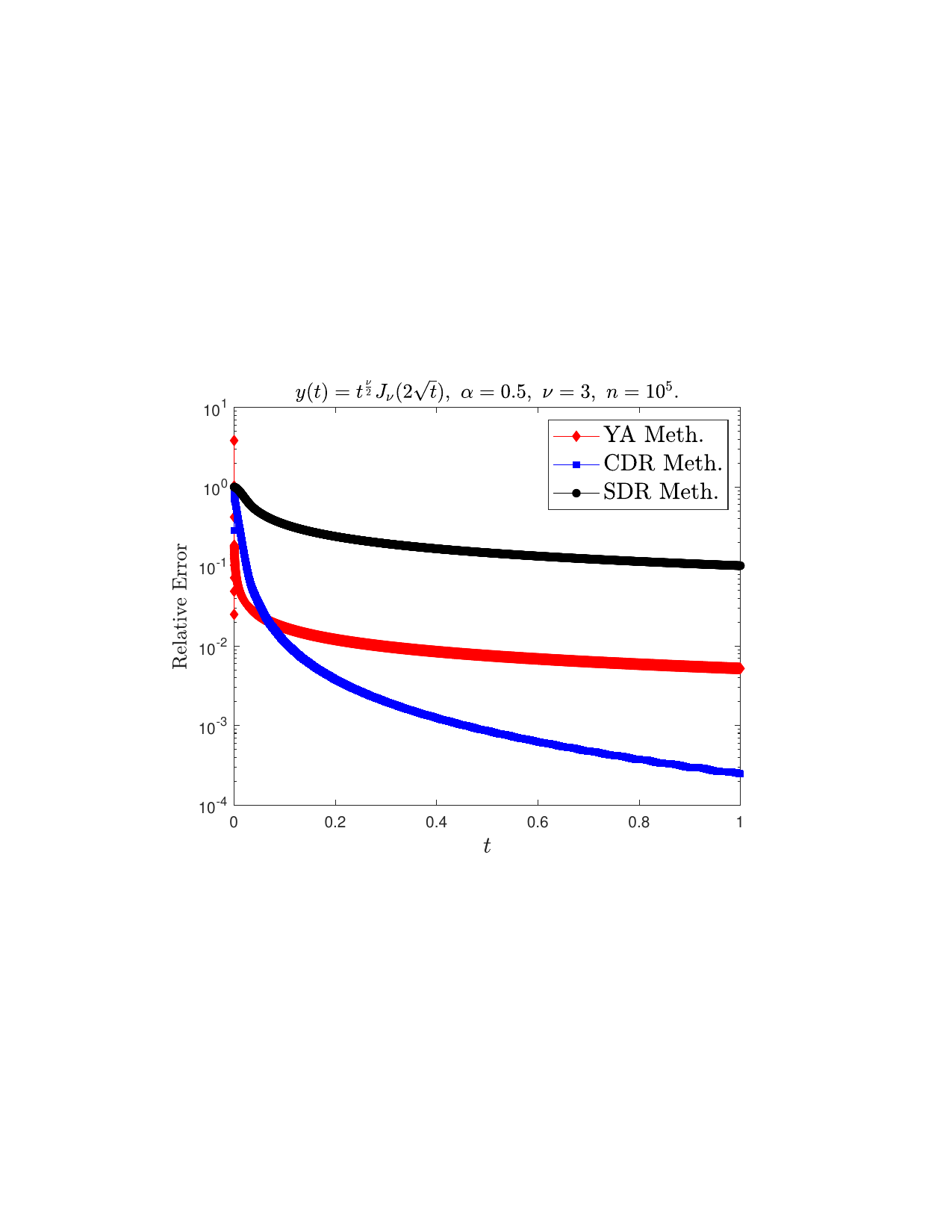}\hspace{-3cm}\includegraphics[width=12cm]{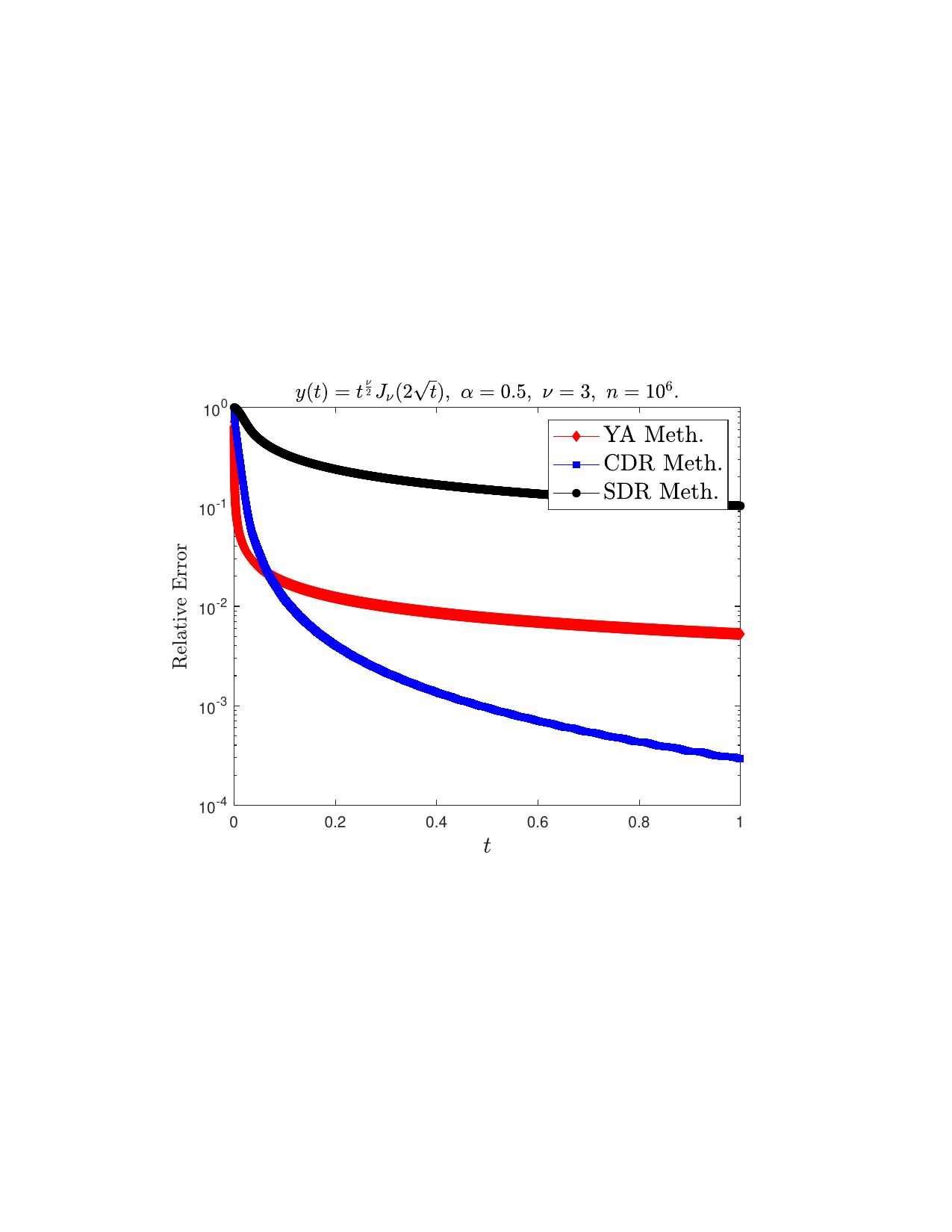}\\ \vspace{-5cm}
		\caption{The relative errors obtained by the backward Euler method for  three  methods YA, CDR and SDR for $n=10^5$ and $n=10^6$.}\label{Fig_11}
	\end{figure}
	\begin{figure}[H]
		\vspace{-4cm}
		\includegraphics[width=7cm,height=15cm]{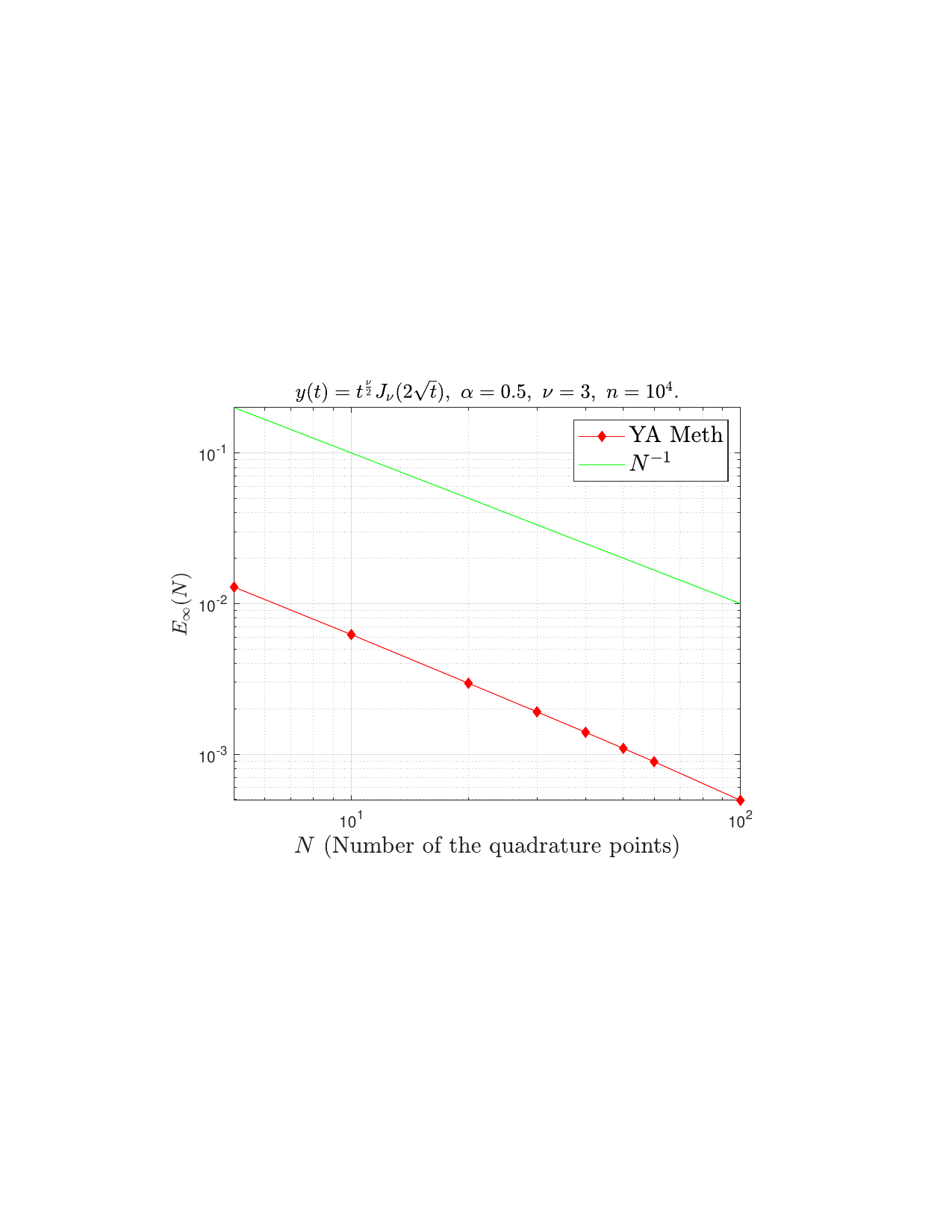}\hspace{-1.5cm}\includegraphics[width=7cm,height=15cm]{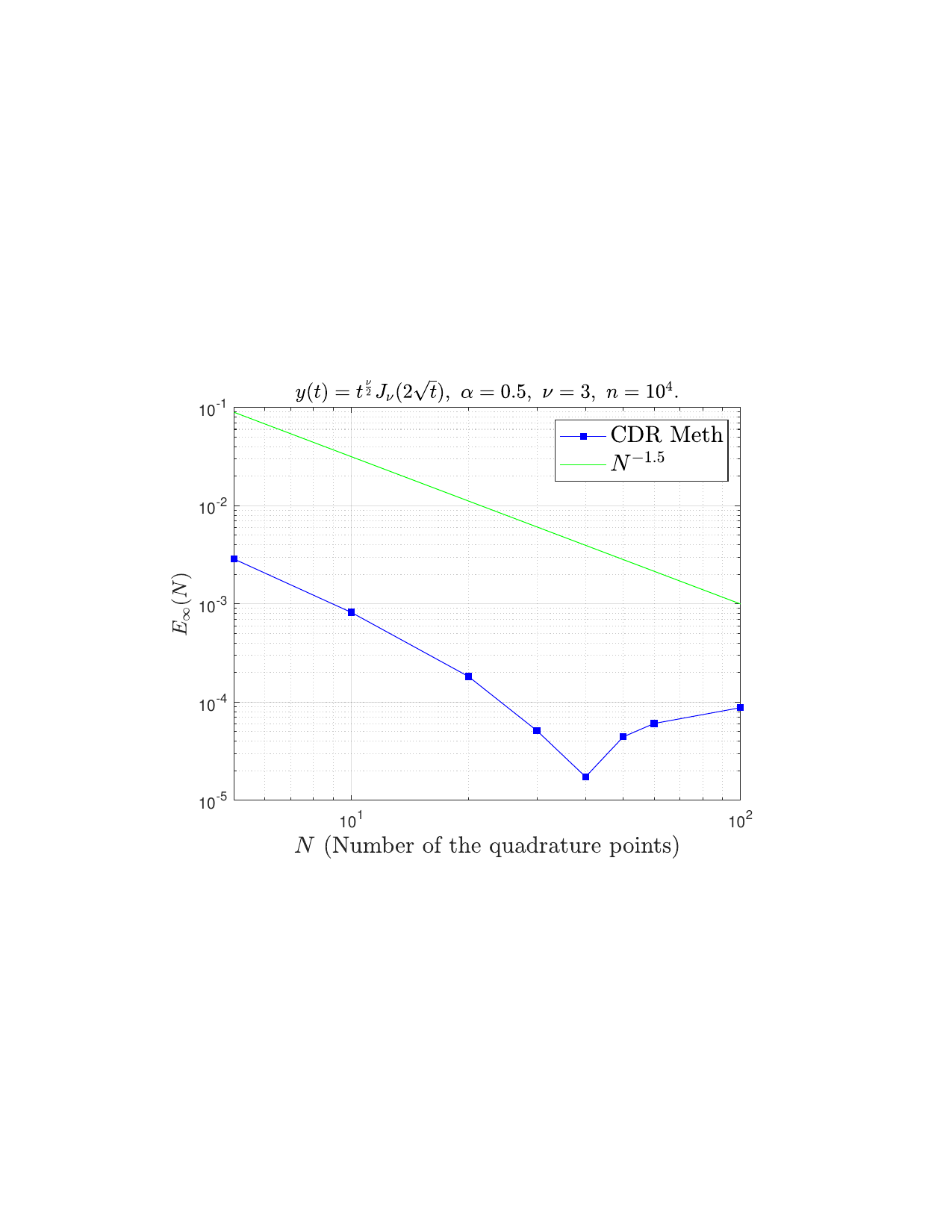}\hspace{-1.5cm}\includegraphics[width=7cm,height=15cm]{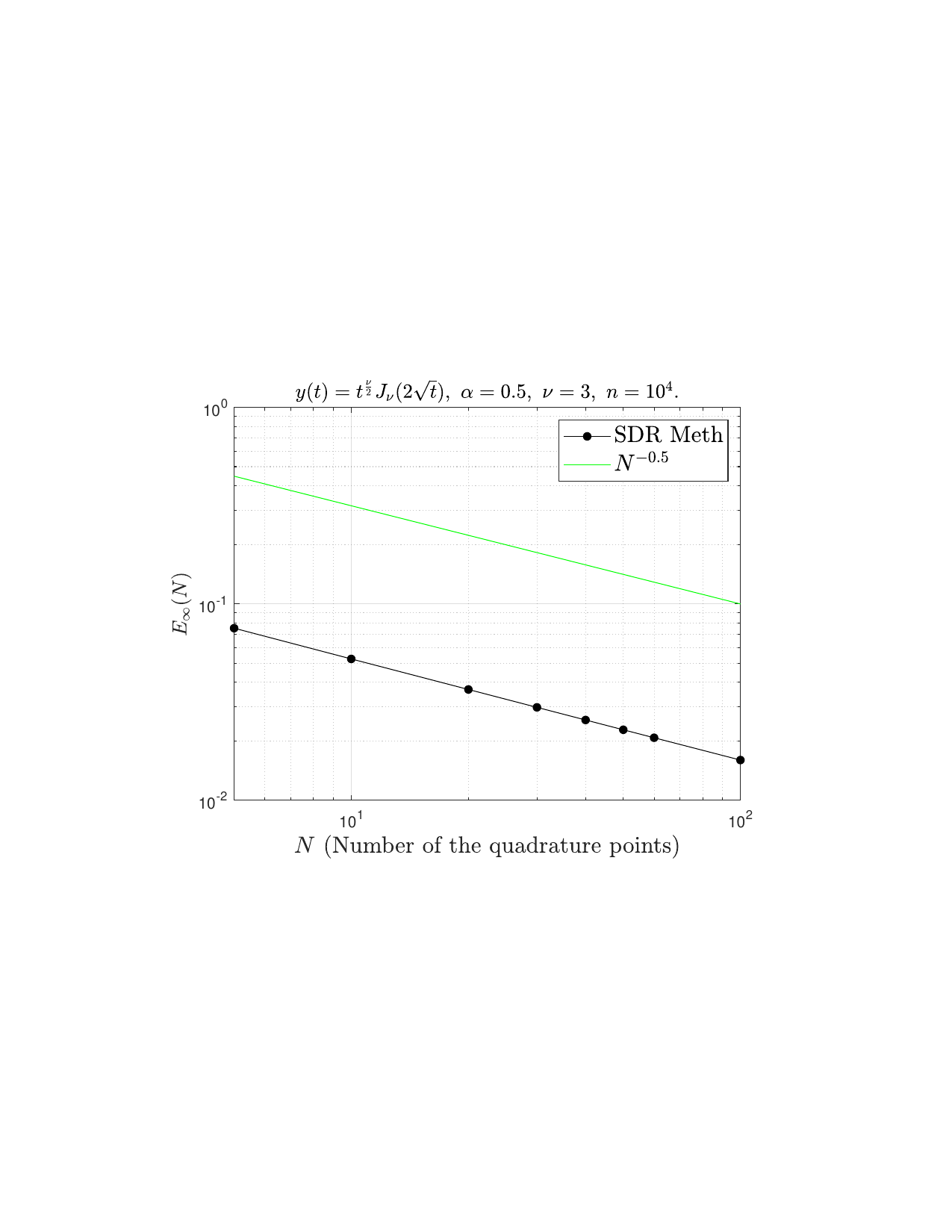}\\ \vspace{-2cm}
		\vspace{-2.5cm}
		\caption{Maximum norm of the errors obtained by the backward Euler method for  three  methods YA, CDR and SDR with $n=10^4$ and various values of $N$.}\label{Fig_12}
	\end{figure}
\end{Example}
\begin{Remark}\label{ImpRem}
Due to the results obtained from Examples \ref{Ex_1}-\ref{Ex_4}, the following conclusions can be drawn:
\begin{itemize}
	\item Numerical experiments show that  the first term of the overall errors of the methods CDR, SDR and YA proposed in Theorems \ref{OverErrCDRSDR} and \ref{OverErrYA} will usually dominate the overall error. This means that we can say that the errors of the methods CDR, SDR and YA decay like  $\mathcal{O}(N^{\alpha-2})$, $\mathcal{O}(N^{\alpha-1})$ and $\mathcal{O}(N^{2\alpha-2})$, respectively, where $N$ is the number of quadrature points. So, we don't need to apply these methods with small values of $h$. In other word, it is not necessary to use the step size which satisfies $h<N^{-2}$.
	\item As we expected from Theorems \ref{OverErrCDRSDR} and \ref{OverErrYA},  the convergence rate of the SDR method is (very) slow while the error of the CDR method (for function $y(t)$ for which $y''\in C[a,b]$) decays faster than the YA method. 
\end{itemize} 
\end{Remark}
\subsection{An improvement of SDR method }\label{Sec_3.1}
As we saw in the previous section, the convergence rate of the SDR method is very slow. In fact, the slow convergence of the SDR method comes from the asymptotic behavior of $z^{\alpha}\omega^S(z,t)$ when $z\to0$ and $z\to+\infty$. As it can be seen in Theorems \ref{AsymtoticCDRandSDR} and \ref{ErrorQuad}, the convergence rate of the $N$-point generalized Gauss-Laguerre formula for the SDR method applied to Examples \ref{Ex_1} and \ref{Ex_2} is proportional to $\mathcal{O}\left(N^{-0.6}\right)$  and $\mathcal{O}\left(N^{-0.4}\right)$, respectively.

To improve this difficulty, we will use a simple change of variable
$z=\theta^2$ in Theorem \ref{Sin}. So, we have the following theorem:
\begin{Theorem}\label{ImSin}
	(The improved sine diffusive representation (ISDR)). For $0<\alpha<1$, we have
	\begin{equation*}\label{SDR1}
		{}^{C}D_{0^+}^{\alpha}y(t)=\frac{4\cos(\tfrac{\pi\alpha}{2})}{\pi}\int_{0}^{\infty}\theta^{2\alpha-1}\left(\int_{0}^{t}\sin\left((t-\tau)\theta^2\right){y'(\tau)}\,d\tau\right)\,d\tau=\int_{0}^{\infty}\theta^{2\alpha-1}\omega^{IS}(\theta,t)\,d\theta,
	\end{equation*}
	where
	\begin{equation}\label{Kernel_2_Imp}
		\omega^{IS}(\theta,t)=\frac{4\cos(\tfrac{\pi\alpha}{2})}{\pi}  \left(\int_{0}^{t}\sin\left((t-\tau)\theta^2\right){y'(\tau)}\,d\tau\right).
	\end{equation}
	Also, for a given function $y$ for which its first derivative exists on $[0,T]$, $\omega^{IS}(\theta,t)$ (for fixed $\theta>0$) satisfies the following second-order differential equation:
	\begin{equation}\label{ODE3IM}
		\begin{cases}
			\displaystyle\frac{\partial^2 \omega^{IS}}{\partial t^2}+\theta^4\omega^{IS}=\frac{4\cos(\tfrac{\pi\alpha}{2})}{\pi}\ \theta^2y'(t),\\ 	\displaystyle \omega^{IS}(\theta,0)=\frac{\partial}{\partial t}\omega^{IS}(\theta,0)=0.
		\end{cases}
	\end{equation}
	\begin{proof}
		The proof is straightforward. 
	\end{proof}
\end{Theorem}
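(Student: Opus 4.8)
The plan is to obtain both assertions directly from the sine diffusive representation already established in Theorem \ref{Sin}. For the integral identity, I would start from
\[
{}^{C}D_{0^+}^{\alpha}y(t)=\frac{2\cos(\tfrac{\pi\alpha}{2})}{\pi}\int_{0}^{\infty}z^{\alpha-1}\left(\int_{0}^{t}\sin\left((t-\tau)z\right)y'(\tau)\,d\tau\right)\,dz
\]
and perform the substitution $z=\theta^{2}$, which is a smooth strictly increasing bijection of $(0,+\infty)$ onto itself and hence legitimate inside this (convergent) improper integral. Under this change one has $dz=2\theta\,d\theta$ and $z^{\alpha-1}=\theta^{2\alpha-2}$, so that $z^{\alpha-1}\,dz=2\theta^{2\alpha-1}\,d\theta$, while $\sin\left((t-\tau)z\right)=\sin\left((t-\tau)\theta^{2}\right)$. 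Collecting the constant $\frac{2\cos(\pi\alpha/2)}{\pi}\cdot 2=\frac{4\cos(\pi\alpha/2)}{\pi}$ reproduces exactly the claimed representation together with the definition of $\omega^{IS}$ in \eqref{Kernel_2_Imp}.

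For the differential equation I would differentiate $\omega^{IS}(\theta,t)$ twice in $t$ by the Leibniz rule, abbreviating $C:=\frac{4\cos(\pi\alpha/2)}{\pi}$. The first derivative is
\[
\frac{\partial \omega^{IS}}{\partial t}=C\,\theta^{2}\int_{0}^{t}\cos\left((t-\tau)\theta^{2}\right)y'(\tau)\,d\tau,
\]
because the boundary contribution carries the factor $\sin(0)=0$ and therefore vanishes. Differentiating once more, the new boundary term carries $\cos(0)=1$ and produces $C\,\theta^{2}y'(t)$, whereas differentiating the integrand returns $-\theta^{4}\omega^{IS}$; that is,
\[
\frac{\partial^{2}\omega^{IS}}{\partial t^{2}}=C\,\theta^{2}y'(t)-\theta^{4}\omega^{IS}(\theta,t),
\]
which upon rearrangement is precisely the equation \eqref{ODE3IM}. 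The two initial conditions then follow by setting $t=0$ in the expressions for $\omega^{IS}$ and its first $t$-derivative, since in each case the integral runs over an empty interval.

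There is no serious obstacle here; the only points deserving a word of justification are the validity of the change of variables in the improper integral (guaranteed by the smooth monotone bijection $z=\theta^{2}$ together with the convergence already recorded in Remark \ref{RemWeight}) and the interchange of differentiation and integration in the Leibniz step (valid under the stated hypothesis that the first derivative of $y$ exists and is continuous on $[0,T]$). Both are routine, so the proof is indeed straightforward.
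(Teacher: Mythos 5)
Your proof is correct and takes essentially the same route the paper intends: the text immediately preceding the theorem announces the change of variable $z=\theta^{2}$ in Theorem \ref{Sin}, and the declared ``straightforward'' proof is exactly your computation $z^{\alpha-1}\,dz=2\theta^{2\alpha-1}\,d\theta$ giving the constant $\tfrac{4\cos(\pi\alpha/2)}{\pi}$, plus Leibniz differentiation under the integral sign for \eqref{ODE3IM} and the vanishing of the empty-interval integrals for the initial conditions. Your write-up merely supplies the details the paper omits (and in passing your final display makes clear that the outer differential in the paper's first display should read $d\theta$, not $d\tau$).
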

In the following we present the error analysis of the new improvement of the SDR method which we denote by ISDR. The error analysis of the ISDR is fairly similar to those provided in Section \ref{Sec_2.2}. So we denote:
\begin{eqnarray}\label{Resid_ISDR}
	R^{\alpha}_{IS,N,h}y(t)&:=&{}^{C}D_{0^+}^{\alpha}y(t)-{}^{C}D_{0^+,IS,N,h}^{\alpha}y(t)\nonumber\\
	&=&\int_{0}^{+\infty}z^{2\alpha-1}\omega^{IS}(z,t)\,dz-\sum_{i=1}^{N}w_i^{(2\alpha-1)}\ e^{z_i^{(2\alpha-1)}}\omega^{IS}_h(z_i^{(\alpha)},t)\nonumber\\
	&=&R_{IS,N,\alpha}^{GGL}\left[ e^{(.)}\omega^{IS}(.,t)\right]+Q_{IS,N,\alpha}^{GGL}\left[ e^{(.)}E^{IS}_h(.,t)\right], 
\end{eqnarray}
where 
\begin{equation}\label{ErrQuadISDR}
	R_{IS,N,\alpha}^{GGL}\left[ e^{(.)}\omega^{IS}(.,t)\right]:=\int_{0}^{+\infty}z^{2\alpha-1}\omega^{IS}(z,t)\,dz-\sum_{i=1}^{N}w_i^{(2\alpha-1)}\ e^{z_i^{(2\alpha-1)}}\omega^{IS}(z_i^{(2\alpha-1)},t),
\end{equation}
denotes the error of generalized Gauss-Laguerre formula with respect to the weight function $w(z)=z^{2\alpha-1}e^{-z}$ and
\begin{equation}\label{ErrODEISDR}
	Q_{IS,N,\alpha}^{GGL}\left[ e^{(.)}E^{IS}_h(.,t)\right]:=\sum_{i=1}^{N}w_i^{(2\alpha-1)}\ e^{z_i^{(2\alpha-1)}}\left[\omega^{IS}(z_i^{(2\alpha-1)},t)-\omega^{IS}_h(z_i^{(2\alpha-1)},t)\right],
\end{equation}
used for the error of the ODE solver. In the next theorem the asymptotic behavior of the function $\omega^{IS}(z,t)$ when $z\to 0$ and $z\to+\infty$  is provided. 
\begin{Theorem}
		Let $t \in(0, T)$ be fixed and $0<\alpha<1$.
	\begin{enumerate}
		\item[(a)] Assume that there exists some constant $C>0$, such that for all $t\in(0,T)$ we have  $|y'(t)|>C$ then function  $\omega^{IS}(.,t)$ defined  in \eqref{Kernel_2_Imp} behave as:
		\begin{equation}\label{Asymp_1-imp}
			z^{2\alpha-1}\omega^{IS}(z,t)\sim z^{2\alpha+1}\ \ \ \text{as}\ z\to 0.
		\end{equation}
		\item[(b)] Let $y(t)\in C^1[0,T]$ and $y(t)$ and $y'(t)$ be  of exponential order then we have:
		\begin{eqnarray}\label{Asymp_3_imp}
			&& z^{2\alpha-1}\omega^{IS}(z,t)\sim z^{2\alpha-3}\ \ \ \text{as}\ z\to +\infty.
		\end{eqnarray}
	\end{enumerate}
\end{Theorem}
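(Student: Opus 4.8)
The plan is to reduce everything to the asymptotics of the single sine integral
\[
I(w,t):=\int_{0}^{t}\sin\!\big((t-\tau)w\big)\,y'(\tau)\,d\tau,
\]
which were already extracted inside the proof of Theorem \ref{AsymtoticCDRandSDR}, and then to substitute $w=z^{2}$. Comparing \eqref{Kernel_2_Imp} with the definition of $I$, we see at once that $\omega^{IS}(z,t)=\tfrac{4}{\pi}\cos(\tfrac{\pi\alpha}{2})\,I(z^{2},t)$, so the entire problem amounts to tracking the behaviour of $I(w,t)$ as $w\to0$ and $w\to+\infty$ and then feeding in the change of variable $w=z^{2}$ coming from Theorem \ref{ImSin}.

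For the regime $z\to0$ (part (a)) I would reuse the integration-by-parts identity from the proof of Theorem \ref{AsymtoticCDRandSDR}, namely
\[
\frac{1}{w}\,I(w,t)=-\frac{\sin(tw)}{w}\,y(0)+\int_{0}^{t}\cos\!\big((t-\tau)w\big)y(\tau)\,d\tau,
\]
whose right-hand side tends to the finite constant $-t\,y(0)+\int_{0}^{t}y(\tau)\,d\tau$ as $w\to0$. Hence $I(w,t)\sim w$ as $w\to0$, provided this limiting constant is nonzero; the hypothesis $|y'(t)|>C$ is exactly what keeps the leading coefficient from degenerating. Setting $w=z^{2}$ then gives $I(z^{2},t)\sim z^{2}$, so $\omega^{IS}(z,t)\sim z^{2}$ and therefore $z^{2\alpha-1}\omega^{IS}(z,t)\sim z^{2\alpha+1}$, which is precisely \eqref{Asymp_1-imp}.

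For the regime $z\to+\infty$ (part (b)) I would again borrow the Laplace-transform computation from the proof of Theorem \ref{AsymtoticCDRandSDR}(c), which produces $\lim_{w\to+\infty} w\,I(w,t)=y'(t)$. Under the stated assumptions ($y\in C^{1}[0,T]$ with $y$ and $y'$ of exponential order), this yields $I(w,t)\sim w^{-1}$ as $w\to+\infty$. Substituting $w=z^{2}$ gives $I(z^{2},t)\sim z^{-2}$, so $\omega^{IS}(z,t)\sim z^{-2}$, and consequently $z^{2\alpha-1}\omega^{IS}(z,t)\sim z^{2\alpha-3}$, establishing \eqref{Asymp_3_imp}.

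The substitution bookkeeping is trivial; the two genuinely delicate points are the same ones already present in Theorem \ref{AsymtoticCDRandSDR}. First, in the $z\to0$ case one must confirm that the limiting constant is bounded away from zero, so that the relation $\sim$ (a two-sided bound, not merely an upper estimate) is legitimate — this is where the nondegeneracy hypothesis on $y'$ is essential. Second, the $z\to+\infty$ argument is only formal as written, since it interchanges the limit with the inverse Laplace transform; making it rigorous requires a Riemann--Lebesgue / oscillatory-integral justification of $\lim_{w\to+\infty} w\,I(w,t)=y'(t)$, for which the exponential-order assumption is what keeps the transform manipulations valid. Once those two points are granted exactly as in the earlier theorem, the change of variable $w=z^{2}$ finishes the proof immediately, which is why I expect the bulk of the effort to lie in re-invoking the earlier estimates rather than in any new computation.
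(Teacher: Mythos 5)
Your proof is correct and is essentially the paper's own argument: the paper disposes of this theorem with ``fairly similar to the proof of Theorem \ref{AsymtoticCDRandSDR},'' and your reduction $\omega^{IS}(z,t)=\tfrac{4}{\pi}\cos\left(\tfrac{\pi\alpha}{2}\right)I(z^{2},t)$ followed by the substitution $w=z^{2}$ in the earlier $w\to0$ integration-by-parts limit and the $w\to+\infty$ Laplace-transform limit is exactly that argument made explicit. Your two caveats (nondegeneracy of the limiting constant under $|y'(t)|>C$, and the formal limit/inverse-Laplace interchange) match the level of rigor the paper itself adopts, so no gap beyond what is already present in the source.
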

 \begin{proof}
 The proof is fairly similar to the proof of Theorem \ref{AsymtoticCDRandSDR}. 
 \end{proof}
The next theorem, gives the error bound of the new improvement of the SDR method.
\begin{Theorem}\label{OverErrISDR}
	Let $0<\alpha<1$. If a A-stable one-step implicit method of order $p$ with the step size $h<N^{-4}$, (where $N$ is the number of integration points in the generalized Gauss-Laguerre formula) is used for  Eq. \eqref{ODE3IM}, then 	for $y(t)\in C^1[0,T]$ and $t\in[0,T]$, we have the overall error analysis of ISDR approximation formula:

		\begin{equation}\label{OVERERRISDR}
			\Big|R^{\alpha}_{IS,N,h}y(t)\Big|=\mathcal{O}(N^{2\alpha-2})+\mathcal{O}(h^p)\int_{0}^{4N}e^{3T\displaystyle z^4 }\,dz.
		\end{equation}
	
\end{Theorem}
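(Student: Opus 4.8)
The plan is to follow the structure of the overall error analysis already established in Theorem \ref{OverErrCDRSDR}, since the ISDR method is obtained from SDR merely by the substitution $z=\theta^2$. I would split the residual $R^{\alpha}_{IS,N,h}y(t)$ exactly as in \eqref{Resid_ISDR} into the quadrature error $R_{IS,N,\alpha}^{GGL}$ and the ODE-solver error $Q_{IS,N,\alpha}^{GGL}$, and then bound each term separately. The two ingredients I would assemble are (i) the asymptotic behaviour of the integrand $z^{2\alpha-1}\omega^{IS}(z,t)$ established in the preceding theorem, namely that it behaves like $z^{2\alpha+1}$ as $z\to0$ and like $z^{2\alpha-3}$ as $z\to+\infty$, and (ii) the A-stability and order-$p$ hypotheses on the one-step implicit ODE solver together with the constraint $h<N^{-4}$.

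For the quadrature contribution $R_{IS,N,\alpha}^{GGL}$, I would invoke the same machinery as in Theorem \ref{ErrorQuad}, which is cited from \cite{Diethelm2008}. The decisive point is that, after the change of variable, the integrand decays like $z^{2\alpha-3}$ at infinity, i.e. with exponent in $(-3,-1)$, so the generalized Gauss-Laguerre formula with weight $w(z)=z^{2\alpha-1}e^{-z}$ converges at the rate governed by that decay exponent. Tracking the exponent through the error estimate of \cite{Diethelm2008} yields the $\mathcal{O}(N^{2\alpha-2})$ term; this is precisely the accelerated rate that motivated the substitution, matching the YA rate rather than the slow SDR rate $\mathcal{O}(N^{\alpha-1})$.

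For the ODE-solver contribution $Q_{IS,N,\alpha}^{GGL}$, I would reproduce the argument of Theorem \ref{BoundODESolver} and Lemma \ref{Conv_ODE}, but with the correct Lipschitz constant for the ISDR system. The key difference from the SDR case is that the differential equation \eqref{ODE3IM} has coefficient $\theta^4$ rather than $z^2$, so the relevant Lipschitz constant is $L=\bigl(z_k^{(2\alpha-1)}\bigr)^4$, which forces the stability restriction $h<\bigl(z_N^{(2\alpha-1)}\bigr)^{-4}\sim N^{-4}$ and produces the factor $e^{3Tz^4}$ inside the integral. Converting the weighted sum $\sum_i w_i^{(2\alpha-1)}e^{z_i}|E^{IS}_h|$ into an integral bound $\mathcal{O}(h^p)\int_0^{4N}e^{3Tz^4}\,dz$, exactly as in the proof of Theorem~5 of \cite{Diethelm2008}, gives the second term of \eqref{OVERERRISDR}.

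The main obstacle I anticipate is the bookkeeping in the ODE-solver term: one must verify that replacing $z^2$ by $\theta^4$ in the system propagates correctly through the A-stability estimate so that the exponential in Lemma \ref{Conv_ODE} becomes $e^{3T\theta^4}$ and the node growth $z_N^{(2\alpha-1)}\sim 4N$ yields the stated upper limit and the step-size restriction $h<N^{-4}$. Once the asymptotic decay of the integrand is in hand for the quadrature part and the modified Lipschitz constant is handled for the ODE part, the two estimates combine additively, and the conclusion \eqref{OVERERRISDR} follows immediately, in complete parallel with the proof of Theorem \ref{OverErrCDRSDR}.
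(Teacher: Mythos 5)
Your proposal is correct and follows exactly the route the paper intends: its proof of Theorem \ref{OverErrISDR} is a one-line appeal to the argument of Theorem \ref{OverErrCDRSDR}, i.e.\ the decomposition \eqref{Resid_ISDR} combined with the quadrature estimate in the style of Theorem \ref{ErrorQuad} and the ODE-solver bound in the style of Lemma \ref{Conv_ODE} and Theorem \ref{BoundODESolver}. Your bookkeeping --- the decay exponent $2\alpha-3$ yielding $\mathcal{O}(N^{2\alpha-2})$, the Lipschitz constant $\bigl(z_k^{(2\alpha-1)}\bigr)^4$ forcing $h<N^{-4}$, and the factor $e^{3Tz^4}$ with upper limit $4N$ from $z_N^{(2\alpha-1)}\sim 4N$ --- is precisely the adaptation the paper leaves implicit.
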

\begin{proof}
The proof is obtained by the similar fashion which used  for  	Theorem \ref{OverErrCDRSDR}.
\end{proof}
\begin{Remark}
	It is worthy to point out that as we stated in Remark \ref{OverErrCDRSDR}, in practice,  we don't need to use the step size $h$ in such a way $h<N^{-4}$.  
\end{Remark}
\begin{Remark}
Thanks to the overall error of the ISDR presented in Theorem \ref{OverErrISDR}, 	we expect that the new improvement method can work like  the YA one (See Theorem \ref{OverErrYA}).  
\end{Remark}
\begin{Example}
To show the efficiency and accuracy of the ISDR, we use this method to approximate the Caputo fractional derivative of order $\alpha$ of the following functions:
\begin{eqnarray*}
	&& y(t)=t^{1.6},\ \alpha=0.4,\ t\in[0,3],\\
	&& y(t)=t^{3},\ \alpha=0.6,\ t\in[0,1],\\
    && y(t)=\sin t,\ \alpha=0.5,\ t\in[0,1],\\
    && y(t)=t^{\frac{\nu}{2}}J_{\nu}(2\sqrt{t}),\ \nu=3,\ \alpha=0.5,\ t\in[0,1].
\end{eqnarray*}

To make a good comparison,  the maximal errors of the approximation methods CDR, SDR, YA and ISDR obtained by the backward Euler method with $n=10^4$ and various values of $N$ of these functions are shown in Figs. \ref{Fig_14} and \ref{Fig_15}.

It is clearly observed from  Figs. \ref{Fig_14} and \ref{Fig_15} that, although,  the convergence rate of the ISDR method applied to the functions $y(t)=t^3$ and $y(t)=t^{\frac{\nu}{2}}J_{\nu}(2\sqrt{t})$ with $\nu=3$, is the same as the YA method, but for other functions, the errors of ISDR method decay like the SDR one and thus we have the surprising results. This problem (may) comes from the smoothness of the Caputo fractional derivative of the functions $y(t)=t^{1.6}$ and $y(t)=\sin t$. 
\begin{figure}[H]
	\includegraphics[width=12cm]{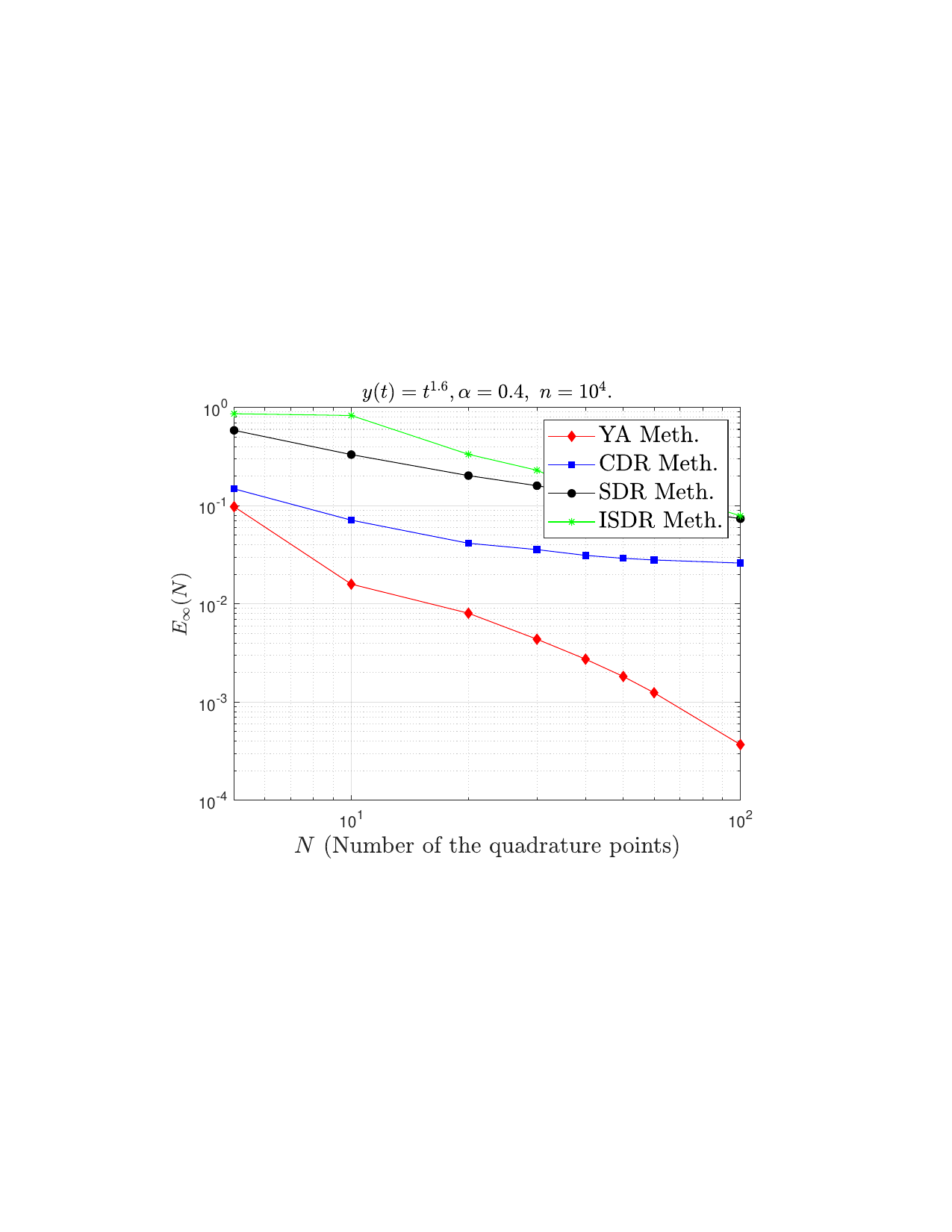}
	\hspace{-3cm}\includegraphics[width=12cm]{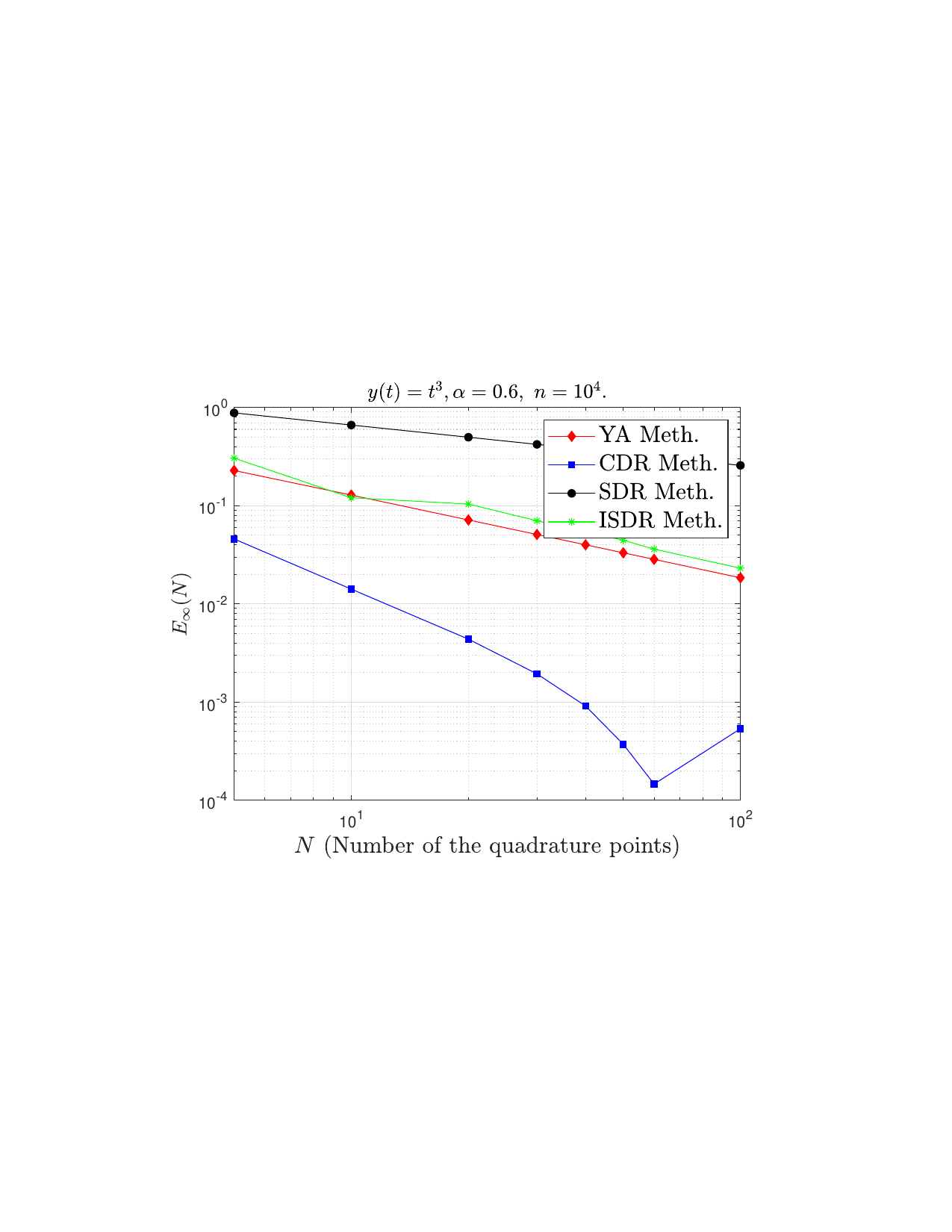}\vspace{-5cm}
	\caption{Maximum norm of the errors obtained by the backward Euler method for four  methods YA, CDR, SDR and ISDR with $n=10^4$ and various values of $N$.}\label{Fig_14}
\end{figure}
\begin{figure}[H]
	\vspace{-4cm}
	\includegraphics[width=12cm]{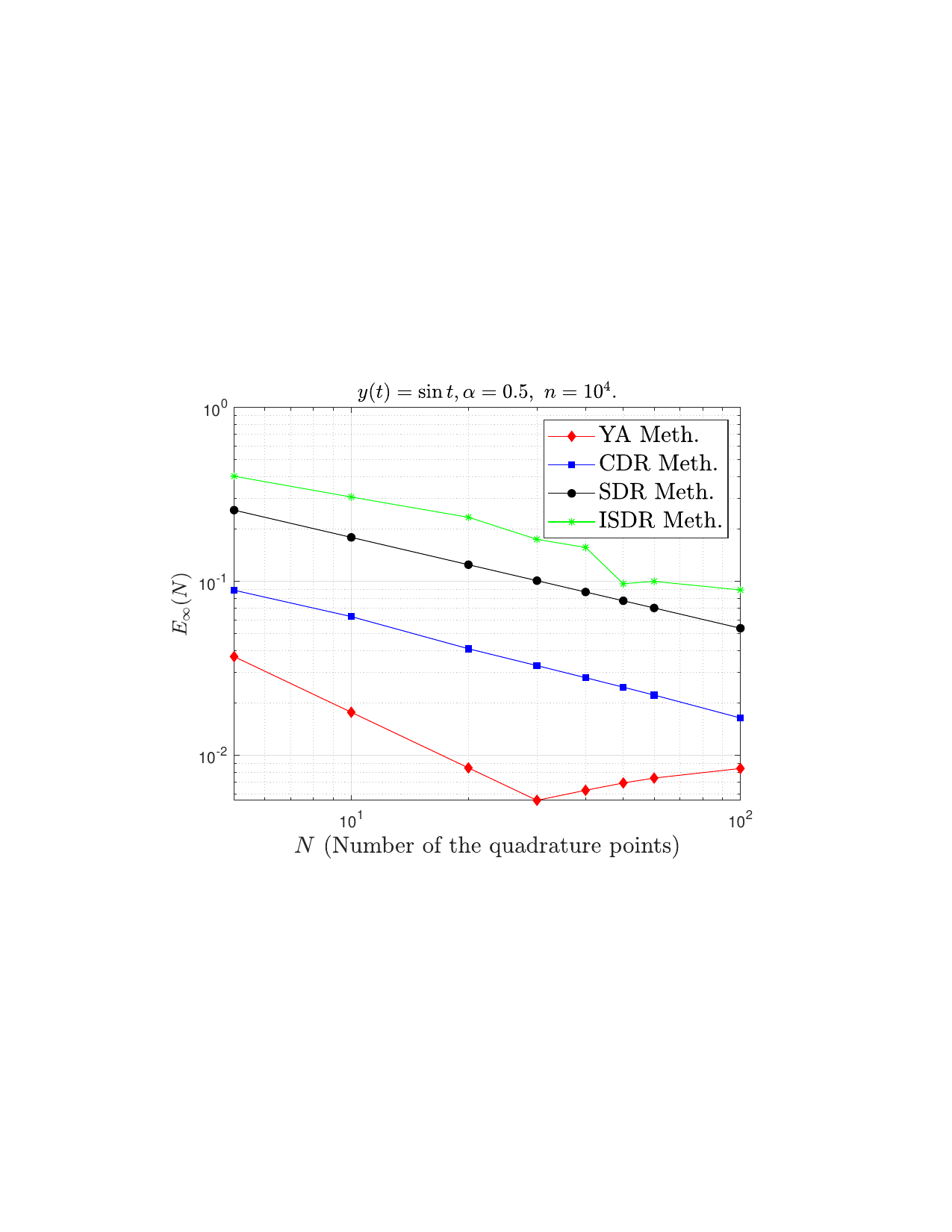}\hspace{-3cm}\includegraphics[width=12cm]{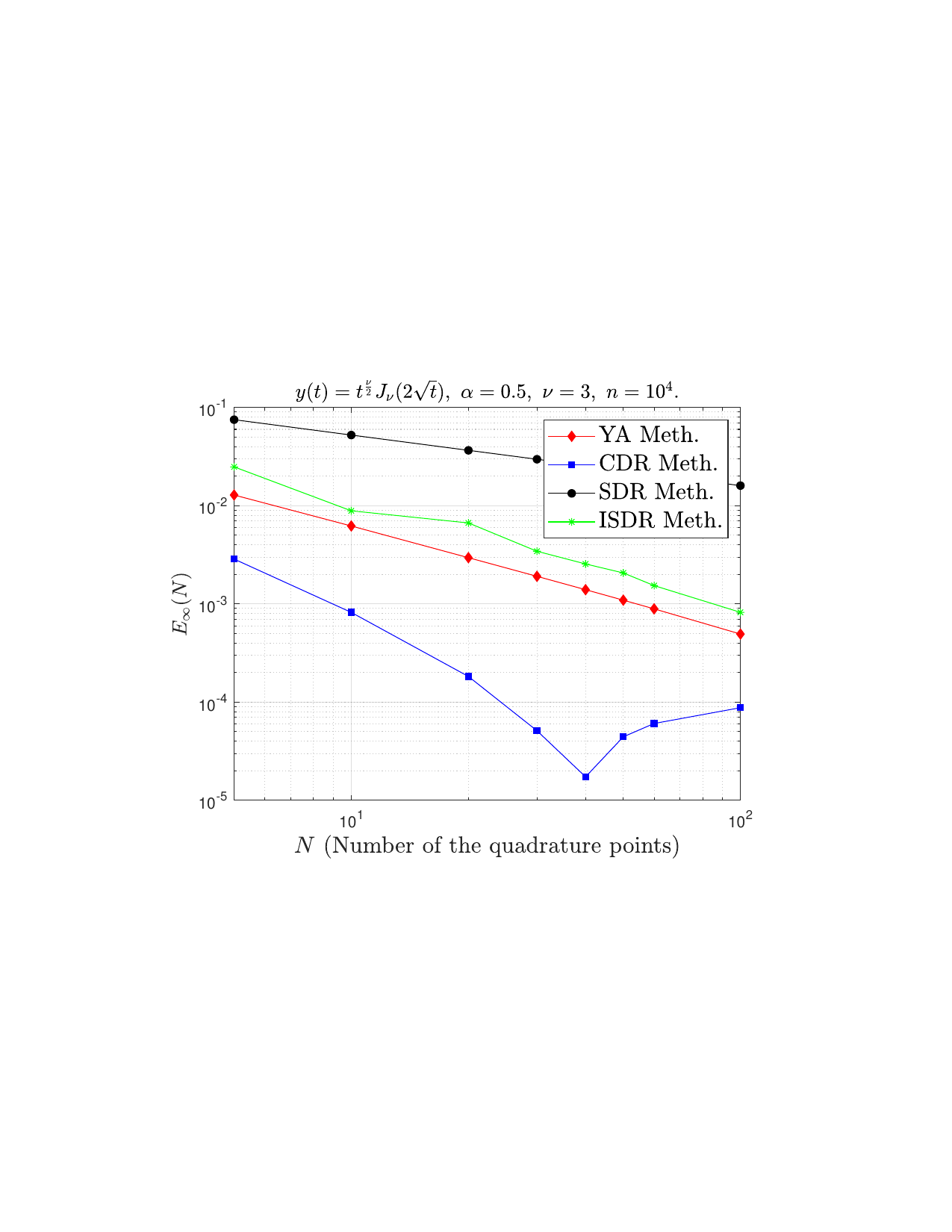}\\ \vspace{-2cm}
	\vspace{-2.5cm}
	\caption{Maximum norm of the errors obtained by the backward Euler method for  four  methods YA, CDR, SDR and ISDR with $n=10^4$ and various values of $N$.}\label{Fig_15}
\end{figure}
\end{Example}
\section{Concluding remarks and future works  }\label{Sec_4}
The diffusive representation for the Caputo fractional derivative has very interesting feature from the numerical point of view (See \cite{Diethelm2009,Diethelm2008}). This paper presents two new classes of diffusive representations with sine and cosine kernels to approximate the Caputo fractional derivative which were called as the cosine and sine diffusive representations and denoted by CDR and SDR, respectively. The error analysis of the CDR and  SDR methods proved in detail. 

Some numerical examples have also provided to show the efficiency and accuracy of the new methods. Our numerical experiments show that for function $y(t)$ which ${}^{C}D_{0^+}^{\alpha}y(t)\in C^2[0,T]$, in opposite to the SDR method, the CDR one is faster than the Yuan and Agrawal (YA) method.  So, in the final part of the paper, a new version of the SDR method (which was denoted by ISDR) is also proposed and verified numerically. The maximal error of the ISDR method for function $y(t)$ which ${}^{C}D_{0^+}^{\alpha}y(t)\in C^2[0,T]$ decays like  YA method. 

The authors believed that the proposed methods will open a new window for researchers to investigate the diffusive representation methods in more and in-depth details. So, in the following, the authors suggest some future works which can be considered in the continuation of this paper. 
\begin{enumerate}
	\item The first suggestion is to propose some new modifications and improvements of the CDR and SDR methods to obtain some fast and accurate numerical methods to approximate the Caputo fractional derivative (See some improvements of the YA method by Diethelm and et al., \cite{Diethelm2008,Diethelm2009,Diethelm2021,AnewDiff,Liu2018,MR3936246,Birk2010}).
	\item As we saw in the previous sections, the CDR and SDR methods have been used to approximate the Caputo fractional derivative of order $\alpha\in(0,1)$. So, our second suggestion is to extend these methods for $\alpha>1$. 
	\item Our third suggestion is to apply the CDR and SDR methods to solve problem with fractional derivatives such as:
	\begin{enumerate}
		\item  Fractional ordinary and partial differential equation.
		\item Fractional optimal control and calculus of variation problems.
	\end{enumerate}
	\item Our last suggestion is to follow the idea of this paper to introduce some new generalizations of the diffusive representation.
\end{enumerate}
\section{Acknowledgment}
The authors would like to express their special thanks to Professor K. Diethelm for his helpful comments and suggestions on the first version of the current paper.
%

\end{document}